\newtheorem{theorem}{Theorem}[section]
\newtheorem{lemma}[theorem]{Lemma}
\newtheorem{proposition}[theorem]{Proposition}
\newtheorem{corollary}[theorem]{Corollary}
\newtheorem{remark}[theorem]{Remark}
\newtheorem{definition}[theorem]{Definition}
\definecolor{myblue}{rgb}{0,0,0.6}   
\numberwithin{equation}{section}
\newcounter{corr}
\definecolor{violet}{rgb}{0.580,0.,0.827}
\newcommand{\corr}[3]{\typeout{Warning : a correction remains in page \thepage}
  \stepcounter{corr}
 	            {\color{blue}\ifmmode\text{\,\sout{\ensuremath{#1}}\,}\else\sout{#1}\fi}
              {\color{red}#2}
              {\color{violet} #3}
}
\renewcommand{\div}{\mathrm{div}}
\DeclareMathOperator{\Div}{div}
\newcommand{\Nabla}{\boldsymbol{\nabla}}
\renewcommand{\u}{\boldsymbol{u}}
\renewcommand{\v}{\boldsymbol{v}}
\newcommand{\z}{\boldsymbol{z}}
\newcommand{\w}{\boldsymbol{w}}
\newcommand{\uo}{\boldsymbol{u}_0}
\newcommand{\f}{\boldsymbol{f}}
\newcommand{\ST}{\Sigma_T}
\newcommand{\SO}{\Sigma_0}
\newcommand{\SD}{\Sigma_{\mathcal{D}}}
\newcommand{\Z}{\boldsymbol{\mathcal{Z}}}
\newcommand{\V}{\boldsymbol{\mathcal{V}}}
\newcommand{\Q}{\mathcal{Q}}
\newcommand{\bnOmega}{\Uu{n}_{\Omega}}
\newcommand{\bnF}{\Uu{n}_{F}}
\newcommand{\Qm}{Q_m}
\newcommand{\Qn}{Q_n}
\newcommand{\Ru}{{\cal R}_{\u}}
\newcommand{\In}{I_n}
\newcommand{\Inmo}{I_{n-1}}
\newcommand{\tn}{t_n}
\newcommand{\tnmo}{t_{n-1}}
\newcommand{\Sn}{\Sigma_n}
\newcommand\Vh{\boldsymbol{{\mathcal V}_{h}}}
\newcommand{\Vstar}{\boldsymbol{V_{*}}}
\newcommand\Vht{\boldsymbol{{\mathcal V}_{h, \tau}}}
\newcommand\Zht{\boldsymbol{{\mathcal Z}_{h, \tau}}}
\newcommand\Sht{\boldsymbol{{\mathcal S}_{h, \tau}}}
\newcommand\Qh{\mathcal{Q}_h}
\newcommand\Qht{\mathcal{Q}_{h, \tau}}
\newcommand{\Th}{{\calT_h}}
\newcommand{\Tt}{\calT_{\tau}}
\newcommand{\uht}{\u_{h, \tau}}
\newcommand{\uhtt}{\widetilde{\u}_{h, \tau}}
\newcommand{\vhtt}{\widetilde{\v}_{h, \tau}}
\newcommand{\vht}{\v_{h, \tau}}
\newcommand\vh{\v_h}
\newcommand{\zht}{\z_{h, \tau}}
\newcommand{\wht}{\w_{h, \tau}}
\newcommand{\whtt}{\widetilde{\w}_{h, \tau}}
\newcommand{\pht}{p_{h, \tau}}
\newcommand{\qht}{q_{h, \tau}}
\newcommand{\zh}{z_h}
\newcommand{\Nablah}{\Nabla_h}
\newcommand{\Bht}{\mathcal{B}_{h, \tau}}
\newcommand{\mht}{m_{h, \tau}^{\partial_t}}
\newcommand{\aht}{a_{h, \tau}}
\newcommand{\calA}{\mathcal{A}}
\newcommand{\calB}{\mathcal{B}}
\newcommand{\calH}{\mathcal{H}}
\newcommand{\bht}{b_{h, \tau}}
\newcommand{\cht}{c_{h, \tau}}
\newcommand{\chtb}{\overline{c}_{h, \tau}}
\newcommand{\chth}{\widehat{c}_{h, \tau}}
\newcommand{\chtt}{\widetilde{c}_{h, \tau}}
\newcommand{\hatbar}[1]{%
\begingroup%
  \let\macc@kerna\z@%
  \let\macc@kernb\z@%
  \let\macc@nucleus\@empty%
  \widehat{\mathchoice%
    {\raisebox{.3ex}{\vphantom{\ensuremath{\displaystyle #1}}}}%
    {\raisebox{.3ex}{\vphantom{\ensuremath{\textstyle #1}}}}%
    {\raisebox{.16ex}{\vphantom{\ensuremath{\scriptstyle #1}}}}%
    {\raisebox{.14ex}{\vphantom{\ensuremath{\scriptscriptstyle #1}}}}%
    \smash{\overline{#1}}}%
\endgroup%
}
\newcommand{\chtbh}{
\hatbar{c}_{h, \tau}}
\newcommand{\Fh}{\calF_h}
\newcommand{\FhI}{\calF_{h, \circ}}
\newcommand{\FhD}{\calF_{h, \mathcal{D}}}
\newcommand{\Fhtime}{\calF_h^{\mathrm{time}}}
\newcommand{\Fhtimen}{\calF_h^{\mathrm{time},(n)}}
\newcommand{\FhItime}{\calF_{h, \circ}^{\mathrm{time}}}
\newcommand{\FhItimen}{\calF_{h, \circ}^{\mathrm{time}, (n)}}
\newcommand{\FhDtime}{\calF_{h, \mathcal{D}}^{\mathrm{time}}}
\newcommand{\FhDtimen}{\calF_{h, \mathcal{D}}^{\mathrm{time},(n)}}
\newcommand{\ItR}{\mathcal{I}_{\tau}^{\mathcal{R}}}
\newcommand{\QtR}{\mathsf{Q}_{\tau}^{\mathcal{R}}}
\newcommand{\QtRn}{\mathsf{Q}_{\tau}^{\mathcal{R}, (n)}}
\newcommand{\QtRm}{\mathsf{Q}_{\tau}^{\mathcal{R}, (m)}}
\newcommand{\RTk}{\mathrm{RT}_k}
\newcommand{\BDMk}{\mathrm{BDM}_k}
\newcommand\Pp[2]{\mathbb{P}_{#1}(#2)}
\newcommand\Ppo[2]{\mathbb{P}^0_{#1}(#2)}
\newcommand{\Kp}{K_+}
\newcommand{\Km}{K_-}
\newcommand\cA{C_{\mathcal{A}}}
\newcommand\CSI{C_{S}^{\mathcal{I}}}
\newcommand{\jump}[1]{[\![#1]\!]}
\newcommand{\Clin}{C_{\mathrm{stab}}^{\mathrm{lin}}}
\newcommand{\Tnorm}[2]{|\!|\!|#1|\!|\!|_{\mathcal{E};#2}}
\newcommand{\Id}{\mathsf{Id}}
\newcommand{\Pt}{\mathcal{P}_{\tau}}
\newcommand{\IRT}{\mathcal{I}_h^{\mathsf{RT}}}
\newcommand{\Piht}{\Pi_{h, \tau}}
\newcommand{\eu}{\boldsymbol{e_u}}
\newcommand{\epi}{\boldsymbol{e_\Pi}}
\newcommand{\eremk}{\hbox{}\hfill\rule{0.8ex}{0.8ex}}
\newcommand\dx{\,\mathrm{d}\bx} 
\newcommand\dS{\,\mathrm{d}S}
\newcommand\ds{\,\mathrm{d}s}
\newcommand\dt{\,\mathrm{d}t}
\newcommand\dV{\,\mathrm{d}V}
\newcommand\QT{Q_T}
\newcommand\dpt{\partial_t}
\newcommand\p{{\uu{p}}}
\newcommand{\red}[1]{{{\color{black} #1}}}
\numberwithin{equation}{section}
\newlength{\dhatheight}
\newcommand{\uu}[1]{\hbox{\boldmath$#1$}}
\newcommand{\Uu}[1]{{\mathbf{#1}}}
\newcommand{\calT}{{\mathcal T}}
\newcommand{\calF}{{\mathcal F}}
\newcommand{\hK}{h_{K}}
\newcommand{\IN}{\mathbb{N}}
\newcommand{\IR}{\mathbb{R}}
\newcommand{\bx}{{\Uu x}}
\newcommand{\bn}{{\Uu n}}
\newcommand{\IZ}{\mathbb{Z}}
 \newcommand{\mvl}[1]{\{ \!\!\{#1\}\!\!\}}
\newcommand{\Cinv}{C_{\mathrm{inv}}}
\DeclareMathOperator{\diam}{diam} 
\DeclareMathOperator{\esssup}{ess\,sup}
\newcommand*{\SemiNorm}[2]{\left|#1\right|_{#2}}
\newcommand*{\Norm}[2]{\|#1\|_{#2}}
\newcommand{\errU}{\texttt{err}(\mathbf{u})}
\title{\red{Fully and semi-implicit robust space--time DG methods for the incompressible Navier--Stokes equations}}
\author{L. Beir\~ao da Veiga\,\orcidlink{0000-0001-5895-469X}\thanks{Department of Mathematics and Applications, University of Milano-Bicocca, Via Cozzi 55, 20125 Milan, Italy (\href{mailto:lourenco.beirao@unimib.it}{lourenco.beirao@unimib.it}, \href{mailto:franco.dassi@unimib.it}{franco.dassi@unimib.it}, \href{mailto:sergio.gomezmacias@unimib.it}{sergio.gomezmacias@unimib.it})} \footnote{ IMATI-CNR, 27100 Pavia, Italy} \and F. Dassi\,\orcidlink{0000-0001-5590-3651}\footnotemark[1] \and S. G\'omez\,\orcidlink{0000-0001-9156-5135
}\footnotemark[1]}
\date{}
\begin{document}

\maketitle

\begin{abstract}
\noindent 
We carry out a stability and convergence analysis of a fully discrete scheme for the time-dependent Navier--Stokes equations resulting from combining an~$H(\div, \Omega)$-conforming discontinuous Galerkin spatial discretization, and a discontinuous Galerkin time stepping scheme. Such a scheme is proven to be pressure robust and Reynolds semi-robust.
Standard techniques can be used to analyze only the case of lowest-order approximations in time. Therefore, we use some nonstandard test functions to prove existence of discrete solutions, unconditional stability, and quasi-optimal convergence rates for any degree of approximation in time. In particular, a continuous dependence of the discrete solution on the data of the problem, and quasi-optimal convergence rates for low and high Reynolds numbers are proven in an energy norm including the term~$L^{\infty}(0, T; L^2(\Omega)^d)$ for the velocity.
\red{In addition to the standard discontinuous Galerkin time stepping scheme, which is fully implicit, we propose and analyze a novel high-order semi-implicit version that avoids the need of solving nonlinear systems of equations after the first time slab, thus significantly improving the efficiency of the method. }
Some numerical experiments validating our theoretical results are presented.
\end{abstract}

\paragraph{Keywords.} $H(\div, \Omega)$-conforming method, discontinuous Galerkin time stepping, pressure-robustness, Reynolds-semi-robustness, Navier--Stokes equations.

\paragraph{Mathematics Subject Classification.} 76D05, 35Q30, 76M10.

\section{Introduction}
This work focuses on the numerical approximation of the solution to the time-dependent incompressible Navier--Stokes equations.

Let the space--time domain~$\QT$ be given by~$\Omega \times (0, T)$, where~$\Omega \subset \IR^d$ ($d \in \{2, 3\}$) is an open, bounded domain with Lipschitz boundary~$\partial \Omega$, and~$T > 0$ is the final time. We define the surfaces~$\SO := \Omega \times \{0\}$, $\ST:= \Omega \times \{T\}$, and~$\SD := \partial \Omega \times (0, T)$.  For given force term~$\f: \QT \to \IR^d$, initial datum~$\uo : \Omega \to \IR^d$, and constant viscosity~$\nu > 0$, we consider the following incompressible Navier--Stokes IBVP: find the velocity~$\u: \QT \to \IR^d$ and the pressure~$p : \QT \to \IR$, such that
\begin{equation}
\label{eq:model-problem}
\begin{cases}
\dpt \u - \nu \Delta \u + (\Nabla \u) \u + \nabla p = \f & \text{ in } \QT, \\
\Div \u = 0 & \text{ in } \QT, \\
\u = {\textbf{0}} & \text{ on } \SD, \\
\u = \uo & \text{ on } \SO.
\end{cases}
\end{equation}


\paragraph{Previous works.}
The literature on numerical methods for the approximation of solutions to the Navier--Stokes equations is extensive, which evidences the high interests in the simulation of incompressible fluid flows. We focus on methods satisfying the pressure- and Reynold-semi-robustness properties.

A method is said to be \emph{pressure robust} \cite{John_etal:2017} if changes in the data that do not affect the continuous velocity solution (but only the pressure) retain the same property also at the discrete level. An important consequence of pressure robustness is that the error estimates for the velocity~$\u$ are not influenced by the pressure~$p$.  
An effective way to obtain pressure robustness is to use an inf-sup stable pair $(\Vh, \Qh)$ of spaces for the velocity/pressure with $H(\div, \Omega)$-conforming velocities and such that $\div \Vh \subseteq Q_h$, thus allowing for the ``elimination" of the gradient component of the source term~$\f$ in the computation of the velocities.
The pressure robustness (or lack of it) of several schemes was studied for the first time in~\cite{John_etal:2017}. $H(\div, \Omega)$-conforming spaces have also been used to design pressure-robust hybridizable discontinuous Galerkin (HDG) methods~\cite{Rhebergen_Wells:2018,Lehrenfeld_Schroeberl:2016}.
An alternative approach consists of using~$H(\div,\Omega)$-reconstructions of the test functions (see \cite{linke2014role} and also, for instance, \cite[\S5.2]{John_etal:2017} or \cite{Quiroz_DiPietro:2025,Li_Rui:2023}). 

Another important property for a method is that of being \emph{Reynolds semi-robust}, which means that (assuming a regular solution) the stability and error constants are independent of the Reynolds number~$\mathrm{Re} = \nu^{-1}$, and (possibly) an extra power~$h^{1/2}$ is gained in the convection-dominated regime (i.e., when~$\nu \lesssim h$). This is particularly significant for large Reynolds numbers (or, equivalently, small viscosities~$\nu$), see for instance \cite{Schroeder_etal:2018}.
In order to obtain Reynolds-semi-robustness, several stabilization terms have been proposed, such as streamline-diffusion~\cite{Hansbo_Szepessy:1990,Johson_Saranen:1986}, continuous interior penalty~\cite{Burman_Fernandez:2007}, local projection stabilization~\cite{Ahmed_Matthies:2021,Arndt_Dallmann_Lube:2015,DeFrutos_etal:2019}, and grad-div~\cite{DeFrutos_etal:2018,Garcia_Bosco:2023}.
Unfortunately, most of these stabilization terms are incompatible with the pressure-robustness property. A possible way to overcome this issue is to stabilize the velocity-pressure formulation with stability terms for the vorticity equation~\cite{Ahmed_etal-vorticity:2021,Beirao_Dassi_Vacca:2024,Barrenechea_etal:2024,Garcia_Novo:2024}, which is compatible with the pressure robustness.
On the other hand, $H(\div, \Omega)$-conforming discontinuous Galerkin (DG) methods count with a natural upwind stabilization for the convective term that does not destroy their pressure robustness~\cite{schroeder2018divergence, barrenechea_etal:2020, Han_Hou:2021}.

A final aspect we focus on, which is frequently neglected, is that of the time discretization. Analyses of fully discrete schemes often concentrate on low-order time discretizations~\cite{Ahmed_etal:2017,Garcia_Bosco:2023,Guzman_Shu_Sequeira:2017,Han_Hou:2021}.
On the other hand, DG time stepping can be easily formulated in a variational setting for any degree of approximation in time. 
This allows for an analysis based on variational tools that do not rely on Taylor expansions, so
less regularity of the continuous solution is usually required. Fully discrete schemes based on the DG time stepping have been analyzed in~\cite{Hansbo_Szepessy:1990,Chrysafinos_Walkington:2010,Ahmed_Matthies:2021,Beirao_Dassi_Vacca:2024,Gazca_Kaltenbach:2024,Vexler_Wanger:2024}, but present at least one of the following limitations: \emph{i)} only consider low-order approximations~$(\ell = 0, 1)$, \emph{ii)} stability and error constants depend on negative powers of the viscosity~$\nu$, or \emph{iii)} do no estimate the~$L^{\infty}(0, T; L^2(\Omega)^d)$-error for the velocity.
In fact, the main difficulty in the analysis of the DG time stepping is that standard energy arguments provide control only of the~$L^2(\Omega)^d$ norm of the velocity~$\u$ at the discrete times~$\{t_n\}_{n = 1}^N$, which is not enough to control the energy at all times for high-order approximations.
Instead, we use some nonstandard test functions to get stability in the~$L^{\infty}(0, T; L^2(\Omega)^d)$ norm. 


\paragraph{Main contributions.} 
We carry out a robust stability and convergence analysis of a fully discrete scheme that combines~$H(\div, \Omega)$-conforming spatial discretizations and the DG time stepping scheme (for a preliminary work developed for the much simpler setting of a scalar linear equation, comprising both finite and virtual elements, see \cite{Beirao_Dassi_Gomez:2025}). In particular, our estimates are valid for any degree of approximation in time. 
In order to simplify the stability analysis, we approximate the nonlinear convective terms by using a Gauss-Radau interpolant in time as in~\cite{Ahmed_Matthies:2021}. This also allows us to avoid any restriction of the time step~$\tau$ in terms of the spatial meshsize~$h$.

Our first main result is the following continuous dependence of the discrete solution~$\uht$ to a linearized (Oseen's-like) problem on the data (see Proposition~\ref{prop:continuous-dependence} below):
\begin{equation*}
\Tnorm{\uht}{\uht}^2 \lesssim \Norm{\f}{L^1(0, T; L^2(\Omega)^d)}^2 + \Norm{\u_0}{L^2(\Omega)^d}^2, 
\end{equation*}
where the energy norm~$\Tnorm{\cdot}{\uht}$ defined in~\eqref{def:energy-norm} includes the term~$\Norm{\uht}{L^{\infty}(0, T; L^2(\Omega)^d)}$, and the hidden constant depends only on the degree of approximation in time~$\ell$ (so it is independent of the mesh parameters~$\tau$ and~$h$, the final time~$T$, the viscosity constant~$\nu$, and the coefficient of the convective term). 
Such a stability result,  which strongly relies on the choice of nonstandard test functions, is then used in Theorem~\ref{thm:existence-discrete-solutions} to prove the well-posedness of the nonlinear problem.

Our second main result concerns~\emph{a priori} error estimates for the fully discrete scheme. Assuming a sufficiently regular velocity solution $\u$, we obtain a result of the kind (see Theorem \ref{prop:final} and Corollary~\ref{corol:final} below):
\begin{equation*}
\Tnorm{\u - \uht}{\uht}^2 \lesssim \, {\tau^{2 \ell+2}} + (\nu + h) h^{2k} \, ,
\end{equation*}
with $k$ denoting the polynomial degree in space, and where the hidden constant is independent of the pressure variable $p$ and inverse powers of $\nu$ (thus evidencing the pressure- and Reynolds-semi-robustness of the scheme). 
Furthermore, 
for $\nu \lesssim h$, the method enjoys a quicker pre-asymptotic error reduction rate in the spatial parameter $h$.

\red{As an important further development of this contribution, we propose a \emph{novel high-order DG time stepping scheme}, which treats the nonlinear coefficient explicitly (thus significantly improving the efficiency of the method). More precisely, we extend the approximation from the previous time slab to approximate the nonlinear coefficient in the convective term. 
Differently from finite-difference-based time-stepping schemes, this extension is achieved naturally in the time DG setting simply exploiting that the discrete functions in each time slab are polynomials in time (thus being defined on the whole time interval [0,T]). 
Finally, by a suitable modification of the above mentioned results for the original method, we prove that also this semi-implicit version is well posed, pressure robust, and Reynolds semi-robust.}

\paragraph{Notation.} 
We shall denote by~$\nabla$ the gradient for scalar functions, and by~$\Nabla$ the matrix whose rows contain the componentwise gradients of a vector field. Similarly, $\Delta$ and~$\div$ denote, respectively, the componentwise Laplacian operator and the divergence for vector fields.
Given~$m \in \IZ$, we also denote by~$\dpt^{(m)}$ the~$m$th time derivative. 

Moreover, we shall use standard notation for Sobolev spaces, as well as for their seminorms and norms~\cite{McLean:2000}. Given an open, bounded domain~$\Upsilon \subset \IR^d$ ($d \in \{1, 2, 3\})$, and scalars~$p \in [1, \infty]$ and~$s \in \IR$, we denote by~$W_p^s(\Upsilon)$ the corresponding Sobolev space, and its associated seminorm and norm by~$|\cdot|_{W_p^s(\Upsilon)}$ and~$\|\cdot\|_{W_p^s(\Upsilon)}$, respectively.
When~$p = 2$, we use the notation~$H^s(\Upsilon) := W_2^s(\Upsilon)$, whereas, for~$s = 0$, we denote by~$L^p(\Upsilon) := W^0_p(\Upsilon)$.
The closure of the space~$C_0^{\infty}(\Upsilon)$ in the~$H^1(\Upsilon)$ norm is denoted by~$H_0^1(\Upsilon)$.

Vector fields are identified with boldface, and the notation~$X^d$ is used for the space of functions with~$d$-components in the space~$X$.

Given an interval~$(a, b)$ and a Banach space~$(Z, \Norm{\cdot}{Z})$, we denote by~$L^p(a, b; Z)$ the corresponding Bochner-Sobolev space, whose norm is given by
\begin{equation*}
\Norm{\varphi}{L^p(a, b; Z)} := \Big(\int_a^b \Norm{\varphi(\cdot, t)}{Z}^p \dt \Big)^{1/p} \ , \quad p \in [1,\infty) \, ,
\end{equation*}
with the usual extension for~$p=\infty$.

\paragraph{Structure of the paper.}
In Section \ref{sec:method}, we present the variational formulation of the continuous problem and the proposed discrete scheme. In Section \ref{sec:well-posedness}, we prove existence of discrete solutions, together with continuous dependence 
on the data. Afterwards, in Section \ref{sec:conv}, we prove convergence estimates for the method. 
\red{Section~\ref{sec:semi-implicit} is devoted to the description and analysis of the semi-implicit version of the method.}
Finally, in Section~\ref{sec:num}, 
we develop some tests to provide the numerical evidence of our theory described in 
the previous sections. 

\section{Variational formulation and description of the method}\label{sec:method}
We first present the continuous weak formulation of model~\eqref{eq:model-problem}. Then, in Section~\ref{subsec:mesh-notation}, we introduce some standard DG notation and our assumptions on the space--time meshes. Finally, in Section~\ref{subsec:discrete-spaces}, we define the discrete spaces involved in the fully discrete formulation presented in Section~\ref{subsec:fully-discrete-formulation}.

In order to introduce the continuous weak formulation of model~\eqref{eq:model-problem}, we define the spaces
\begin{alignat*}{3}
\V & := H_0^1(\Omega)^d, \\
\Q & := L_0^2(\Omega) = \Big\{q \in L^2(\Omega) \, :\, \int_{\Omega} q \dx = 0 \Big\}, \\
H(\Div, \Omega) & := \big\{\v \in L^2(\Omega)^d \, : \, \Div \v \in L^2(\Omega) \big\}, \\
\Z & := \big\{ \v \in H(\div, \Omega) \, :\, \Div \v = 0 \text{ in } \Omega, \text{ and } \v \cdot \bnOmega = 0 \text{ on } \partial \Omega \big\},
\end{alignat*}
and the following continuous forms:
\begin{alignat*}{3}
a(\u, \v) & := (\Nabla \u, \Nabla \v)_{\Omega} & & \qquad \forall (\u, \v) \in \V \times \V, \\
b(\u, q) & := - (q, \Div \u)_{\Omega} & & \qquad \forall (\u, q) \in \V \times \Q, \\
c(\w; \u, \v) &:= ( (\Nabla \u)\w, \v)_{\Omega} & & \qquad \forall (\w, \u, \v) \in \V \times \V \times \V,
\end{alignat*}
where~$(\cdot, \cdot)_{\Omega}$ denotes the~$L^2(\Omega)$ inner product.

For~$\f \in L^2(\QT)^d$ and~$\uo \in \Z$, the continuous weak formulation of model~\eqref{eq:model-problem} reads: 
find~$\u \in L^2(0, T; \V)$ with~$\dpt \u \in L^2(0, T; \V^*)$, and~$p \in L^2(0, T; \Q)$, such that
\begin{subequations}
\label{eq:weak-formulation}
\begin{alignat}{3}
\int_0^T \big( \langle\dpt \u, \v \rangle + \nu a(\u, \v) + c(\u; \u, \v) + b(\v, p) \big) \dt  & = \int_0^T (\f, \v)_{\Omega} \dt  & & \qquad \forall \v \in L^2(0, T; \V), \\
\displaystyle \int_0^T b(\u, q) \dt & = 0 & & \qquad \forall q \in L^2(0, T; \Q),  \\
\u(\cdot, 0) & = \uo & & \qquad \text{ in } \Z,
\end{alignat}
\end{subequations}
where~$\V^* := H^{-1}(\Omega)^d$, and~$\langle \cdot,\, \cdot \rangle$ denotes the duality pairing between~$\V^*$ and~$\V$.

For~$d = 2$ and the above assumptions on~$\f$ and~$\uo$, if the spatial domain is sufficiently smooth, there exists a unique solution~$(\u, p)$ to~\eqref{eq:weak-formulation} with the following regularity:
\begin{equation}
\label{eq:regularity-weak-solution}
\u \in C^0([0, T]; \Z) \cap L^2(0, T; H^2(\Omega)^d) \cap H^1(0, T; L^2(\Omega)^d) \quad \text{ and } \quad p \in L^2(0, T; H^1(\Omega)),
\end{equation}
whereas, for~$d = 3$, there exists a final time~$T^* > 0$ depending on the data of the problem such that there is a unique solution with the regularity in~\eqref{eq:regularity-weak-solution} but replacing~$T$ by~$T^*$ (see, e.g., \cite[Thm. V.2.1 in Ch. V]{Boyer_Fabrie:2013}).


\subsection{Mesh and DG notation\label{subsec:mesh-notation}}
Let~$\{\Th\}_{h > 0}$ be a family of shape-regular conforming simplicial meshes for the spatial domain~$\Omega$. We define the meshsize~$h := \max\{\hK\, :\, K \in \Th\}$, where~$\hK := \diam(K)$,  and denote by $\rho$ the shape regularity parameter of $\{\Th\}_{h > 0}$. We denote the set of facets of~$\Th$ by~$\Fh = \FhI \cup \FhD$, where~$\FhI$ and~$\FhD$ are the sets of interior and boundary facets of~$\Th$, respectively.
For any facet~$F \in \FhI$, we set~$h_F$ as its diameter and~$\bnF$ as one of the two unit normal vectors orthogonal to~$F$ (with the convention that, if~$F \in \FhD$, then~$\bnF$ points outwards of $\Omega$). 
Whenever needed, we will also consider the above diameters $h_K,\, h_F$ as piecewise constant functions living on the set of elements or facets, respectively.

Let also~$\Tt$ be a partition of the time interval~$(0, T)$ of the form~$0:= t_0 < t_1 < \ldots < t_N := T$. For~$n = 1, \ldots, N$, we define the time interval~$\In := (\tnmo, \tn)$, the time step~$\tau_n := \tn - \tnmo$, the partial cylinder~$\Qn := \Omega \times \In$, and the surface~$\Sn := \Omega \times \{\tn\}$. Moreover, for~$n = 1, \ldots, N - 1$ and piecewise smooth (scalar or vector) functions, we define the time jumps
\begin{equation*}
\jump{z}_n(\bx) := z(\bx, \tn^-) - z(\bx, \tn^+) \qquad \forall \bx \in \Omega,
\end{equation*}
where
\begin{equation*}
z(\cdot, \tn^-) := \lim_{\varepsilon \to 0^+} z(\cdot, \tn - \varepsilon) \quad \text{ and } \quad z(\cdot, \tn^+) := \lim_{\varepsilon \to 0^+} z(\cdot, \tn + \varepsilon).
\end{equation*}

For convenience, we introduce the following notation for the \emph{time-like} facets:
\begin{alignat*}{3}
\FhItimen & := \bigcup \Big\{F \times \In \, :\, F \in \FhI\Big\},  & & 
\qquad \FhItime & := \bigcup_{n = 1}^N \FhItimen, \\
\FhDtimen & := \bigcup \Big\{F \times \In \, :\, F \in \FhD \Big\}, & & 
\qquad \FhDtime & := \bigcup_{n = 1}^N \FhDtimen, \\
\Fhtimen &:= \FhItimen \cup \FhDtimen,  & & 
\qquad \Fhtime &:= \FhItime \cup \FhDtime.
\end{alignat*}

Standard notation is used for the spatial average~$\mvl{\cdot}$ and spatial jumps~$\jump{\cdot}$ of piecewise scalar-valued~$(q)$ or vector-valued~$(\v)$ functions: for any interior facet~$F \in \FhI$ shared by two elements~$\Kp$ and~$\Km$ in~$\Th$ with~$\bnF$ pointing outwards of~$\Kp$, we have
\begin{alignat*}{3}
\mvl{q}_F & := \frac12 (q_{|_{\Kp}} + q_{|_{\Km}} ) \quad  & & \text{ and } \quad 
& & \jump{q}_{F} := q_{|_{\Kp}} - q_{|_{\Km}}, \\
\mvl{\v}_F & := \frac12 \big(\v_{|_{\Kp}} + \v_{|_{\Km}} \big) \quad & & \text{ and } \qquad & & \jump{\v}_{F} := \v_{|_{\Kp}} - \v_{|_{\Km}},
\end{alignat*}
and, for any boundary facet~$F \in \FhD$, we set
\begin{equation*}
\mvl{q}_F = \jump{q}_{F} = q \qquad \text{ and } \qquad \mvl{\v}_F = \jump{\v}_{F} = \v.
\end{equation*}

\subsection{Discrete spaces\label{subsec:discrete-spaces}}
Given a degree of approximation in space~$k \in \IN$ with~$k \geq 1$, we denote by~$\RTk(\Th)$ and~$\BDMk(\Th)$ the Raviart--Thomas and the Brezzi--Douglas--Marini elements of order~$k$, respectively. Moreover, we denote by~$\Ppo{k}{\Th}$ the space of piecewise polynomials of degree at most~$k$ defined on~$\Th$ with zero mean over~$\Omega$. 

We set the discrete spaces~$\Vh \subset H(\div, \Omega)$ and~$\Qh \subset \Q$ as either (see, for instance, \cite{Boffi_Brezzi_Fortin:2013, Ern_Guermond-I:2021}):
\begin{equation}\label{space-choice}
\Vh = \RTk(\Th) \text{ and } \Qh = \Ppo{k}{\Th} \qquad \text{ or } \qquad \Vh = \BDMk(\Th) \text{ and } \Qh = \Ppo{k-1}{\Th}.
\end{equation}
Moreover, given a degree of approximation in time~$\ell \in \IN$, we define the following space--time discrete spaces:
\begin{alignat*}{3}
\Vht & := \big\{ \v \in L^2(0, T; H(\div, \Omega)) \, : \, \v_{|_{\SD}}{\cdot {\bf n}_\Omega} = 0, \text{ and } \v_{|_{\Qn}} \in \Pp{\ell}{\In} \otimes \Vh \text{ for }   n = 1, \ldots, N \big\}, \\
\Zht & := \big\{ \vht \in \Vht \, : \, \div \vht {}_{|_{\Qn}} = 0 \text{ for } n = 1, \ldots, N \big\}, \\
\Qht & := \big\{ q \in L^2(0, T; \Q) \, : \, q_{|_{\Qn}} \in \Pp{\ell}{\In} \otimes \Qh \text{ for } n = 1, \ldots, N \big\},
\end{alignat*}
where ${\bf n}_\Omega$ denotes the unitary outward normal to $\partial \Omega$. 

Let~$H^1(\Tt)$ and~$\Pp{\ell}{\Tt}$ be, respectively, the spaces of piecewise~$H^1$ and~$\mathbb{P}_{\ell}$ functions defined on~$\Tt$. For~$n = 1, \ldots, n$, we denote by~$\{(\omega_i^{(n)}, s_i^{(n)})\}_{i = 1}^{\ell + 1}$ the left-sided Gauss-Radau quadrature rule in the interval~$\In$ with nodes~$\tnmo =: s_1^{(n)} < s_2^{(n)} < \ldots < s_{\ell + 1}^{(n)} < \tn$, and positive weights~$\{\omega_i^{(n)}\}_{i = 1}^{\ell + 1}$.
This quadrature rule is exact for polynomials of degree less than or equal to~$2\ell$.

We denote by~$\ItR : H^1(\Tt) \to \Pp{\ell}{\Tt}$ the broken Lagrange interpolant at the Gauss-Radau nodes, and introduce the following notation for the use of the Gauss-Radau quadrature rule on the time interval~$\In$:
\begin{equation}
\label{eq:Gauss-Radau-integration}
\QtRn(u) := \sum_{i = 1}^{\ell + 1} \omega_i^{(n)} u(s_i^{(n)}) \approx \int_{\In} u(t) \dt.
\end{equation}
The following properties can be easily deduced from the exactness of this quadrature rule and the definition of the interpolant~$\ItR$:
\begin{subequations}
\begin{alignat}{3}
\label{eq:property-interpolant-time-1}
\big(\ItR u, w_{\tau} \big)_{\In} & = \QtRn(u w_{\tau}) & & \qquad \text{ for all } u \in H^1(\In) \text{ and } w_{\tau} \in \Pp{\ell}{\In}, \\
\label{eq:property-interpolant-time-2}
\big(\ItR u_{\tau}, w_{\tau} \big)_{\In} & = \QtRn(u_{\tau} w_{\tau}) = (u_{\tau}, w_{\tau})_{\In} & & \qquad \text{ for all } u_{\tau} , w_{\tau} \text{ such that } u_{\tau} w_{\tau} \in \Pp{2 \ell}{\In}, \\
\label{eq:property-interpolant-time-3}
\big(\ItR u, \ItR w \big)_{\In} & = \QtRn(u w)  & & \qquad \text{ for all } u, w \in H^1(\In).
\end{alignat}
\end{subequations}

For the sake of simplicity, in what follows, we may omit the dependence on the time interval when no confusion arises.
Moreover, for any Banach space~$(Z, \Norm{\cdot}{Z})$ and $p \in (1,\infty]$, we still denote by~$\ItR : W_p^1(\Tt; Z) \to P_{\ell} \otimes Z$ the trivial extension of \eqref{eq:Gauss-Radau-integration} with $u(\cdot, s_i^{(n)}) \in Z$. Note that, if~$Z \subseteq L^1(\Omega)$, properties~\eqref{eq:property-interpolant-time-1}--\eqref{eq:property-interpolant-time-3} will hold pointwise in space almost everywhere in~$\Omega$.

\subsection{Fully discrete space--time formulation
\label{subsec:fully-discrete-formulation}}
Let~$\sigma$ be a sufficiently large stability parameter 
as in standard 
interior penalty-DG schemes, and~$\gamma$ be 
the piecewise-constant (in space) function 
living in the set of facets defined as 
\begin{equation}
\label{def:gamma}
\gamma(\wht)_{|_F} := \gamma_F(\wht) := \max{ \{c_S, \| \wht \cdot \bnF \|_{L^\infty(F)} \}} \qquad \forall F \in \Fh \, ,
\end{equation}
for any discrete function~$\wht \in \Vht$, and with $c_S$ a small positive ``safeguard'' constant.
We introduce the following space--time forms:
\begin{subequations}
\label{def:discrete-space-time-forms}
\begin{alignat}{3}
\label{def:mht}
\mht(\uht, \vht) & := \sum_{n = 1}^N  (\dpt \uht, \vht)_{\Qn}  - \sum_{n = 1}^{N - 1} \big(\jump{\uht}_n, \vht(\cdot, \tn^+) \big)_{\Omega} + (\uht, \vht)_{\SO}, \\
\nonumber
\aht(\uht, \vht) & := \sum_{n = 1}^N \aht^{(n)}(\uht, \vht) \\
\nonumber
& := \sum_{n = 1}^N \Big[ (\Nablah \uht, \Nablah \vht)_{\Qn} - (\mvl{\Nablah \uht} \bnF, \jump{\vht})_{\Fhtimen} \\
\label{def:aht}
& \qquad - (\jump{\uht}, \mvl{\Nabla \vht} \bnF)_{\Fhtimen} + (\sigma h_F^{-1} \jump{\uht}, \jump{\vht}\big)_{\Fhtimen} \Big], \\
\nonumber
\cht(\wht; \uht, \vht) & := \sum_{n = 1}^N \cht^{(n)}(\wht; \uht, \vht) \\
\nonumber
& := \sum_{n = 1}^N \Big[ \Big(\ItR \big((\Nabla \uht) \wht \big), \vht \Big)_{\Qn} - \Big(\ItR \big((\wht \cdot \bnF) \jump{\uht} \big), \mvl{\vht} \Big)_{\FhItimen} \\
\label{def:cht}
& \quad + \frac12 \Big(\ItR\big( \gamma(\wht) \jump{\uht} \big), \jump{\vht} \Big)_{\FhItimen}\Big],  \\
\label{def:bht}
\bht(\uht, \qht) &:= -(\Div \uht, \qht)_{\QT},
\end{alignat}
\end{subequations}
for all~$\wht, \uht, \vht \in \Vht$ and~$\qht \in \Qht$.

%

The proposed space--time DG formulation reads: find~$\uht \in \Vht$ and~$\qht \in \Qht$, such that
\begin{subequations}
\label{eq:space-time-formulation}
\begin{alignat}{3}
\nonumber
\mht(\uht, \vht) & + \nu \aht(\uht, \vht) + \cht(\uht; \uht, \vht)   \\
\label{eq:space-time-formulation-1}
   & + \bht(\vht, \pht) = (\f, \vht)_{\QT} + (\uo, \vht(\cdot, 0))_{\Omega} & \qquad \forall \vht \in \Vht, \\
 \label{eq:space-time-formulation-2}
& \quad \bht(\uht, \qht) = 0 & \qquad \forall \qht \in \Qht.
\end{alignat}
\end{subequations}

Since the space~$\Vht$ is a subspace of~$L^2(0, T; H(\div, \Omega))$ and~$\Div \Vht \subset \Qht$, we can take~$\qht = \Div \uht$ in~\eqref{eq:space-time-formulation-2} and deduce that~$\Div \uht = 0$. 
Therefore, the space--time formulation~\eqref{eq:space-time-formulation} can be written in the following kernel formulation: find~$\uht \in \Zht$ such that
\begin{equation}
\label{eq:kernel-space-time-problem}
\begin{split}
\Bht(\uht; \uht, \zht) := \mht(\uht, \zht) & + \nu \aht(\uht, \zht) 
 + \cht(\uht; \uht, \zht)  \\
 & = (\f, \zht)_{\QT} + (\uo, \zht(\cdot, 0))_{\Omega} \qquad \forall \zht \in \Zht.
\end{split}
\end{equation}

\begin{remark}[Use of the interpolant~$\ItR$]\label{rem:integr}
Recalling the simple identities \eqref{eq:property-interpolant-time-1} and~\eqref{eq:property-interpolant-time-2} it is immediate to check that (i) the particular interpolation~$\ItR$ appearing in the definition of~$\cht(\cdot;\cdot,\cdot)$ corresponds to using a Gauss-Radau quadrature rule in time, and (ii) all the other terms on the left-hand side in~\eqref{eq:space-time-formulation} can be exactly calculated by the same rule. Therefore, the above interpolation~$\ItR$ also allows for an actual simplification of the code.
\eremk
\end{remark}

\section{Existence of discrete solutions}\label{sec:well-posedness}
This section is devoted to proving the existence of discrete solutions to the space--time formulation~\eqref{eq:space-time-formulation}.
In order to do so, we consider the following linearized problem: given~$\wht \in \Zht$, find~$\uht \in \Zht$, 
such that
\begin{equation}
\label{eq:kernel-linearized-problem}
\begin{split}
\Bht(\wht; \uht, \zht) = \mht(\uht, \zht) & + \nu \aht(\uht, \zht) 
 + \cht(\wht; \uht, \zht)  \\
 & = (\f, \zht)_{\QT} + (\uo, \zht(\cdot, 0))_{\Omega} \qquad \forall \zht \in \Zht.
\end{split}
\end{equation}

In Section~\ref{subsec:useful-results}, we deploy some theoretical tools for the stability analysis. Then, in Section~\ref{subsec:weak-partial-bound}, we present the partial bound for~$\uht$ obtained using standard energy arguments, which is later used in Section~\ref{sect:continuous-dependence} to derive a continuous dependence of the discrete solution to~\eqref{eq:kernel-linearized-problem} on the data.
Such a stability estimate is combined with a fixed-point argument to prove the existence of discrete solutions to~\eqref{eq:space-time-formulation} in Section~\ref{subsec:fixed-point}.

\subsection{Some useful results for the stability analysis\label{subsec:useful-results}}
\paragraph{Inverse estimates.} 
We now extend a standard polynomial inverse estimate in one dimension (see, e.g., \cite[Lemma 4.5.3]{Brenner-Scott:book}) to tensor-product spaces. This result will be used in the forthcoming stability and convergence analysis.
\begin{lemma}
\label{lemma:L2-Linfty}
For any Banach space~$(Z, \Norm{\cdot}{Z})$ with~$Z \subseteq L^1(\Omega)$, there exists a positive constant~$\Cinv$ 
depending only on~$\ell$ such that, for~$n = 1, \ldots, N$, it holds
\begin{equation}
\label{eq:inverse-estimate-Linf-L2}
\Norm{w_{\tau}}{L^{\infty}(\In; Z)}^2 \le \Cinv \tau_n^{-1} \Norm{w_{\tau}}{L^2(\In; Z)}^2 \qquad \forall w_{\tau} \in \Pp{\ell}{\Tt} \otimes Z.
\end{equation}
\end{lemma}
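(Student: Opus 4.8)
The plan is to reduce the tensor-product statement to the known one-dimensional polynomial inverse estimate by freezing the spatial variable and treating $w_\tau$ as a $Z$-valued polynomial in time. First I would recall the scalar one-dimensional inverse estimate on a reference interval: there is a constant $C$ depending only on $\ell$ such that for every scalar polynomial $p \in \Pp{\ell}{(-1,1)}$ one has $\|p\|_{L^\infty(-1,1)}^2 \le C \|p\|_{L^2(-1,1)}^2$. This holds because $\Pp{\ell}{(-1,1)}$ is finite-dimensional, so the $L^\infty$ and $L^2$ norms are equivalent, with an equivalence constant depending only on $\ell$ (equivalently, one may invoke \cite[Lemma 4.5.3]{Brenner-Scott:book} directly).

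Next I would transfer this to the physical interval $\In$ by the affine change of variables $t = \tnmo + \tfrac{\tau_n}{2}(1+\hat t)$ mapping $(-1,1)$ onto $\In$. Under this map, the $L^\infty$ norm is scale-invariant while the $L^2$ norm picks up a factor $(\tau_n/2)^{1/2}$, which yields the factor $\tau_n^{-1}$ on the right-hand side. This gives, for scalar $w_\tau \in \Pp{\ell}{\In}$,
\begin{equation*}
\Norm{w_\tau}{L^\infty(\In)}^2 \le \Cinv \, \tau_n^{-1} \Norm{w_\tau}{L^2(\In)}^2,
\end{equation*}
with $\Cinv$ depending only on $\ell$.

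To pass to the $Z$-valued case, I would write any $w_\tau \in \Pp{\ell}{\Tt}\otimes Z$ restricted to $\In$ as $w_\tau(t) = \sum_{j=0}^{\ell} \phi_j(t)\, z_j$ with $\phi_j$ scalar polynomials and $z_j \in Z$, so that $t \mapsto \Norm{w_\tau(t)}{Z}$ is a continuous function of $t$. For each fixed $t$, apply a bounded linear functional, or more directly observe that the map $t \mapsto \Norm{w_\tau(t)}{Z}$ need not be polynomial; the clean route is to fix $t^\star \in \In$ achieving (up to $\varepsilon$) the supremum of $\Norm{w_\tau(t)}{Z}$, pick by Hahn--Banach a functional $\Lambda \in Z^*$ with $\Norm{\Lambda}{Z^*}=1$ and $\Lambda(w_\tau(t^\star)) = \Norm{w_\tau(t^\star)}{Z}$, and note that $t \mapsto \Lambda(w_\tau(t))$ is a scalar polynomial of degree $\le \ell$. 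Applying the scalar estimate to this polynomial and using $|\Lambda(w_\tau(t))| \le \Norm{w_\tau(t)}{Z}$ pointwise then gives the claim after letting $\varepsilon \to 0$.

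The main obstacle is the vector-valued (Bochner) generalization: the $L^2(\In;Z)$ and $L^\infty(\In;Z)$ norms are not norms of a $Z$-valued polynomial in an obvious componentwise sense, so one cannot naively apply the scalar result coordinatewise unless $Z$ is finite-dimensional. The Hahn--Banach linearization above is the key device that circumvents this, reducing everything to the scalar polynomial inequality while keeping the constant $\Cinv$ dependent only on $\ell$ and independent of $Z$. Once this duality step is in place, the scaling argument is routine.
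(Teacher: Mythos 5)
Your proof is correct, but it follows a genuinely different route from the paper's. You reduce the Bochner-valued statement to the scalar one-dimensional inverse estimate (finite dimensionality plus affine scaling, which produces the $\tau_n^{-1}$) and then linearize via a Hahn--Banach norming functional: choosing $\Lambda \in Z^*$ with $\Norm{\Lambda}{Z^*} = 1$ and $\Lambda\big(w_\tau(\cdot,t^\star)\big) = \Norm{w_\tau(\cdot,t^\star)}{Z}$, the map $t \mapsto \Lambda\big(w_\tau(\cdot,t)\big)$ is a scalar polynomial of degree at most $\ell$ satisfying $|\Lambda(w_\tau(\cdot,t))| \le \Norm{w_\tau(\cdot,t)}{Z}$ pointwise, which is exactly what is needed; this step is valid and correctly addresses the obstacle you identify (one cannot argue componentwise in an infinite-dimensional $Z$). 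The paper instead argues directly: it expands $w_\tau$ on $\In$ in the Legendre basis with $Z$-valued (Bochner integral) coefficients, applies the triangle and Jensen inequalities using the convexity of $\Norm{\cdot}{Z}$, invokes $\Norm{L_i}{L^\infty(\In)} = 1$ and $\Norm{L_i}{L^2(\In)}^2 = 2\tau_n/(2i+1)$, and concludes with the Cauchy--Schwarz inequality, which yields the explicit constant $\Cinv = (\ell+1)^3/2$. Your duality device is more modular, since it transfers any scalar inverse inequality to the $Z$-valued setting with the same constant, independently of $Z$; the paper's computation is more elementary (no appeal to Hahn--Banach) and makes the constant explicit. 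Two minor tidying remarks: since $w_\tau$ restricted to $\In$ is a $Z$-valued polynomial, $t \mapsto \Norm{w_\tau(\cdot,t)}{Z}$ is continuous on $[\tnmo,\tn]$, so the supremum is attained and the $\varepsilon$-limiting step can be dropped; and the mid-proof digression about the map $t \mapsto \Norm{w_\tau(\cdot,t)}{Z}$ not being polynomial should be trimmed, since your final argument never relies on it.
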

\begin{proof}
Let~$w_{\tau} \in \Pp{\ell}{\Tt} \otimes Z$ for some Banach space~$(Z, \Norm{\cdot}{Z})$, and~$n \in \{1, \ldots, N\}$. Denoting by~$\{L_i\}_{i = 0}^{\ell}$ the Legendre polynomials in~$\In$, the function $w_{\tau}$ can be written in its Legendre expansion as follows
\begin{equation*}
w_{\tau}(\cdot, t) = \sum_{i = 0}^{\ell} \frac{L_i(t)}{\Norm{L_i}{L^2(\In)}^2}  \int_{\In} w_{\tau}(\cdot, s) L_i(s) \ds \qquad \forall t \in \In.
\end{equation*}
Since all norms are convex, the triangle and the Jensen inequalities, the uniform bound~$\Norm{L_i}{L^{\infty}(\In)} = 1$, and the identity~$\Norm{L_i}{L^2(\In)}^2 = 2\tau_n/(2i + 1)$ give
\begin{equation*}
\Norm{w_{\tau}(\cdot, t)}{Z} \le \sum_{i = 0}^{\ell} \frac{2i+1}{2\tau_n} \int_{\In}  \Norm{w_{\tau}(\cdot, s)}{Z} |L_i(s)| \ds \qquad \forall t \in \In,
\end{equation*}
and, using the Cauchy--Schwarz inequality, we obtain
\begin{equation}
\label{eq:bound-wt-Z}
\begin{split}
\Norm{w_{\tau}(\cdot, t)}{Z} & \le \Norm{w_{\tau}}{L^2(\In; Z)}  \sum_{i = 0}^{\ell} \frac{2i+1}{2\tau_n} \Norm{L_i}{L^2(\In)} \\
& \le \Big(\sum_{i = 0}^{\ell}  \sqrt{\frac{2i + 1}{2}}\Big) \tau_n^{-\frac12} \Norm{w_{\tau}}{L^2(\In; Z)} 
\le \frac{(\ell + 1)^{3/2}}{\sqrt{2\tau_n}} \Norm{w_{\tau}}{L^2(\In; Z)} \qquad \forall t \in \In.
\end{split}
\end{equation}
The inverse estimate~\eqref{eq:inverse-estimate-Linf-L2} then follows by choosing~$t \in [\tnmo, \tn]$ as the value where the left-hand side of~\eqref{eq:bound-wt-Z} takes its maximum value.
\end{proof}

\paragraph{Properties of the discrete space--time forms.} We now discuss some identities resulting from the definition of the space--time forms in~\eqref{def:discrete-space-time-forms}, and the properties of the spaces~$\Vht$, $\Zht$, and~$\Qht$.

We introduce the time-jump functional
\begin{equation}
\label{def:jump-term}
\SemiNorm{\vht}{\sf J}^2 := \frac12 \Big(\Norm{\vht}{L^2(\ST)^d}^2 + \sum_{n = 1}^{N - 1} \Norm{\jump{\vht}_{n}}{L^2(\Omega)^d}^2 + \Norm{\vht}{L^2(\SO)^d}^2\Big) \qquad \forall \vht \in \Vht.
\end{equation}
Integration by parts in time and the jump identity
\begin{equation*}
\frac12 \jump{w^2}_n - w(\cdot, \tn^+) \jump{w}_n = \frac12 \jump{w}_n^2, \qquad n = 1, \ldots, N - 1,
\end{equation*}
lead to
\begin{equation}
\label{eq:identity-mht}
\mht(\vht, \vht) = \SemiNorm{\vht}{\sf J}^2 \qquad \forall \vht \in \Vht.
\end{equation}

In addition, the following norm in the space~$\Vht$ is induced by the bilinear form~$\aht(\cdot, \cdot)$:
\begin{equation}
\label{def:A-norm}
\Norm{\vht}{\calA}^2 := \sum_{n = 1}^N \int_{\In} \Norm{\vht(\cdot, t)}{\calA, h}^2 \dt \qquad \forall \vht \in \Vht,
\end{equation}
where
\begin{equation*}
\Norm{\vh}{\calA, h}^2 := \Norm{\Nablah \vh}{L^2(\Omega)^{d\times d}}^2 + \sum_{F \in \Fh} \int_{F}  \sigma h_F^{-1} |\jump{\vh}|^2 \dS \qquad \forall \vh \in \Vh.
\end{equation*}
More precisely, if the penalty parameter~$\sigma$ is large enough, there exists a strictly positive constant~$\cA$ independent of~$h$ such that (see e.g., \cite[\S6.1.2.1]{DiPietro-Ern:2012})
\begin{equation}
\label{eq:coercivity-aht}
\aht(\vht, \vht) \geq \cA \Norm{\vht}{\calA}^2 \qquad \forall \vht \in \Vht.
\end{equation}

Defining also the auxiliary norm
\begin{equation*}
\Norm{\v}{\calA_*}^2 := \sum_{n = 1}^N \int_{\In} \Norm{\v(\cdot, t)}{\calA_*, h}^2 \dt \quad \forall \v \in \Vstar + \Vht,
\end{equation*}
where~$\Vstar := L^2(0, T; H_0^1(\Omega)^d \cap H^2(\Omega)^d)$, and
\begin{equation}
\label{def:A*-norm}
\Norm{\v}{\calA_{*}, h}^2 := \Norm{\v}{\calA, h}^2 + \sum_{K \in \Th} \hK \Norm{\Nabla \v_{|_K} \bn_K}{L^2(\partial K)^d}^2,
\end{equation}
there exists a positive constant~$C_{\calA_*}$ independent of~$h$ such that (see~\cite[Lemma 4.16 in~\S4.2.3.2]{DiPietro-Ern:2012})
\begin{equation}
\label{eq:continuity-aht}
\aht(\v, \wht) \le C_{\calA_*} \Norm{\v}{\calA_*} \Norm{\wht}{\calA} \qquad \forall \v \in (\Vstar + \Vht), \wht \in \Vht.
\end{equation}

Given~$\wht \in \Zht$, we define the following upwind-in-space functional
\begin{equation}
\label{def:gamma-norm}
\SemiNorm{\vht}{\gamma, \wht}^2 := \frac12 \sum_{F \in \FhI} \QtR\Big( \gamma_F(\wht) \|\jump{\vht}\|_{L^2(F)^d}^2 \Big)
\qquad \forall \vht \in \Vht.
\end{equation}

Using property~\eqref{eq:property-interpolant-time-1} of the interpolant~$\ItR$, integration by parts in space, the fact that functions in~$\Vht$ have single-valued normal components, and the average-jump identity
\begin{equation*}
\jump{\u \cdot \v} = \jump{\u} \cdot \mvl{\v} + \jump{\v} \cdot \mvl {\u},
\end{equation*}
for all~$(\wht, \uht) \in \Zht \times \Vht$, we have
\begin{alignat}{3}
\nonumber
\cht(\wht; \uht, \uht) & = \sum_{n = 1}^N \Big(\ItR \big((\Nabla \uht) \wht \big), \uht \Big)_{\Qn} - \Big(\ItR \big((\wht \cdot \bnF) \jump{\uht} \big), \mvl{\uht} \Big)_{\FhItime} \\
\nonumber
& \quad + \frac12 \Big(\ItR\big( \gamma(\wht) \jump{\uht} \big), \jump{\uht} \Big)_{\FhItime} \\
\nonumber
& = \sum_{n = 1}^N \QtR\Big( \big((\Nabla \uht) \wht,  \uht \big)_{\Omega} \Big) - \QtR\Big( \sum_{F \in \FhI} \int_F (\wht \cdot \bnF) \jump{\uht} \cdot \mvl{\uht} \dS \Big) \\
\nonumber
& \quad + \frac12 \QtR \Big(\sum_{F \in \FhI} \int_F  \gamma_F(\wht) |\jump{\uht}|^2 \dS \Big)\\
\label{eq:identity-cht}
& = \SemiNorm{\uht}{\gamma, \wht}^2.
\end{alignat}

\paragraph{Auxiliary weight functions.} 
We introduce some linear weight functions, which have been used in~\cite{Walkington:2014,Dong-Mascotto-Wang:2024,Gomez-Nikolic:2024} to prove continuous dependence on the data in~$L^{\infty}(0, T; X)$-type norms for linear and quasilinear wave problems. 

For~$n = 1, \ldots, N$, we define
\begin{equation}\label{lambdadef}
\varphi_n(t) := 1 - \lambda_n (t - \tnmo) \quad \text{ with } \lambda_n := \frac{1}{2\tau_n}.
\end{equation}
These functions satisfy the following uniform bounds:
\begin{subequations}
\label{eq:bounds-varphi}
\begin{alignat}{3}
\label{eq:bounds-varphi-1}
\frac12 & \le \varphi_n(t)  \le 1 & & \qquad \forall t \in [\tnmo, \tn], \\
\label{eq:bounds-varphi-2}
\varphi_n'(t) & = - \lambda_n & & \qquad \forall t \in [\tnmo, \tn].
\end{alignat}
\end{subequations}

\begin{lemma}
\label{lemma:bilinear-form-varphi}
Let~$\alpha_h(\cdot, \cdot)$ be a symmetric and coercive bilinear form on~$\Vh$. For~$n = 1, \ldots, N$, it holds
\begin{alignat*}{3}
\int_{\In} \alpha_h\big(\vht(\cdot, t), \ItR(\varphi_n \vht)(\cdot, t) \big)\dt \geq \frac12 \int_{\In} \alpha_h\big(\vht(\cdot, t), \vht(\cdot, t)\big) \dt \qquad \forall \vht \in \Vht.
\end{alignat*}
\end{lemma}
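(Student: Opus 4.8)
The plan is to exploit that both the integrand on the left-hand side and the quadratic form on the right-hand side are polynomials in time of degree at most~$2\ell$, so that the left-sided Gauss--Radau quadrature rule~$\QtRn$ evaluates their integrals over~$\In$ exactly, and then to transfer the scalar weight~$\varphi_n$ through the interpolant by using that~$\ItR$ reproduces its argument at the quadrature nodes.

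First I would fix~$\vht \in \Vht$ and~$n$, and note that~$\vht(\cdot, t)$ is a~$\Vh$-valued polynomial of degree at most~$\ell$ in~$t$, while~$\ItR(\varphi_n \vht)(\cdot, t) \in \Pp{\ell}{\In} \otimes \Vh$ by construction. Expanding both arguments in a basis of~$\Vh$ and using the bilinearity of~$\alpha_h$ shows that~$t \mapsto \alpha_h\big(\vht(\cdot, t), \ItR(\varphi_n \vht)(\cdot, t)\big)$ is a scalar polynomial of degree at most~$2\ell$ on~$\In$. Since the Gauss--Radau rule with~$\ell+1$ nodes is exact up to degree~$2\ell$, the time integral on the left-hand side equals~$\QtRn\big(\alpha_h(\vht, \ItR(\varphi_n \vht))\big)$.

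The key step is then to evaluate this quadrature. Because~$\ItR$ is the Lagrange interpolant at the Gauss--Radau nodes~$\{s_i^{(n)}\}$, it agrees with its argument there, so~$\ItR(\varphi_n \vht)(\cdot, s_i^{(n)}) = \varphi_n(s_i^{(n)})\, \vht(\cdot, s_i^{(n)})$. Pulling out the scalar~$\varphi_n(s_i^{(n)})$ by bilinearity, and using the lower bound~$\varphi_n \geq \tfrac12$ from~\eqref{eq:bounds-varphi-1}, the positivity of the weights~$\omega_i^{(n)}$, and the fact that~$\alpha_h(\vht(\cdot, s_i^{(n)}), \vht(\cdot, s_i^{(n)})) \geq 0$ by coercivity, I obtain
\begin{equation*}
\QtRn\big(\alpha_h(\vht, \ItR(\varphi_n \vht))\big) = \sum_{i=1}^{\ell+1} \omega_i^{(n)}\, \varphi_n(s_i^{(n)})\, \alpha_h\big(\vht(\cdot, s_i^{(n)}), \vht(\cdot, s_i^{(n)})\big) \geq \tfrac12\, \QtRn\big(\alpha_h(\vht, \vht)\big).
\end{equation*}
Finally, since~$\alpha_h(\vht(\cdot, t), \vht(\cdot, t))$ is again a time-polynomial of degree at most~$2\ell$, exactness of the rule yields~$\QtRn\big(\alpha_h(\vht, \vht)\big) = \int_{\In}\alpha_h(\vht, \vht)\dt$, and chaining the three relations gives the claimed inequality.

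I do not expect a genuinely hard step here: the argument rests on two elementary but essential observations, namely that the relevant time-polynomials have degree at most~$2\ell$ (so the quadrature is exact on both sides), and that the interpolant reproduces~$\varphi_n \vht$ exactly at the quadrature nodes. The only point requiring care is the degree bookkeeping — the product of two degree-$\ell$ factors — needed to justify exactness; once this is secured, the pointwise weight bound~$\varphi_n \geq \tfrac12$ does the rest. As an alternative, one could diagonalize~$\alpha_h$ in an~$\alpha_h$-orthonormal basis of~$\Vh$ and reduce the statement, componentwise in space, to the scalar identity~\eqref{eq:property-interpolant-time-1}, but the direct quadrature argument above seems cleanest.
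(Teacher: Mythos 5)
Your proposal is correct and follows essentially the same route as the paper's proof: exactness of the $(\ell+1)$-point Gauss--Radau rule for the degree-$2\ell$ integrands on both sides, the node-reproduction property $\ItR(\varphi_n \vht)(\cdot, s_i^{(n)}) = \varphi_n(s_i^{(n)})\,\vht(\cdot, s_i^{(n)})$, and then the bound $\varphi_n \geq \tfrac12$ together with the positivity of the weights and of the quadratic form $\alpha_h(\cdot,\cdot)$. The only difference is that you spell out the degree bookkeeping and the basis-expansion justification explicitly, which the paper leaves implicit.
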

\begin{proof}
Let~$\vht \in \Vht$ and~$n \in \{1, \ldots, N\}$. We use the exactness of the Gauss-Radau quadrature rule, the positivity of the weights~$\{\omega_i^{(n)}\}_{i = 1}^{\ell + 1}$, and the uniform bound in~\eqref{eq:bounds-varphi-1} for~$\varphi_n$ to obtain
\begin{alignat*}{3}
\int_{\In} \alpha_h\big(\vht(\cdot, t), \ItR(\varphi_n \vht)(\cdot, t) \big)\dt & = \sum_{i = 1}^{\ell + 1} \omega_{i}^{(n)} \varphi_{n}(s_i^{(n)}) \alpha_h\big(\vht(\cdot, s_i^{(n)}),  \vht(\cdot, s_i^{(n)}) \big) \\
& \geq \frac12 \sum_{i = 1}^{\ell + 1} \omega_{i}^{(n)} \alpha_h\big(\vht(\cdot, s_i^{(n)}),  \vht(\cdot, s_i^{(n)}) \big) \\
& = \frac12 \int_{\In} \alpha_h\big(\vht(\cdot, t), \vht(\cdot, t) \big) \dt,
\end{alignat*}
which completes the proof.
\end{proof}

\begin{lemma}[Stability of~$\ItR$]
\label{lemma:stab-ItR}
For any Banach space~$(Z, \Norm{\cdot}{Z})$ with~$Z \subseteq L^1(\Omega)$, there exists a positive constant~$\CSI$ depending only on~$\ell$ such that, for~$n = 1, \ldots, N$, it holds
\begin{equation*}
\Norm{\ItR w
}{L^{\infty}(\In; Z)} \le \CSI \Norm{w}{L^{\infty}(\In; Z)} \qquad \forall w_{\tau} \in 
H^1(\In; Z).
\end{equation*}
\end{lemma}
\begin{proof}
Let~$(Z, \Norm{\cdot}{Z})$ be a Banach space, and~$n \in \{1, \ldots, N\}$. Denoting by~$\{\mathcal{L}_i\}_{i=1}^{\ell + 1}$ the Lagrange polynomials associated with the left-sided Gauss-Radau nodes~$\{s_i^{(n)}\}_{i = 1}^{\ell + 1}$, we have
\begin{equation*}
{\ItR w} (\cdot, t) = \sum_{i = 1}^{\ell + 1} 
w(\cdot, s_i) \mathcal{L}_i(t) \qquad \forall t \in \In.
\end{equation*}
By the triangle inequality, we have
\begin{equation}
\label{eq:aux-bound-ItR}
\Norm{\ItR w(\cdot, t)}{Z} \le \Norm{w}{L^{\infty}(\In; Z)} \sum_{i = 1}^{\ell + 1} |\mathcal{L}_i(t)| \qquad \forall t \in \In,
\end{equation}
and taking~$t$ as the value where the left-hand side of~\eqref{eq:aux-bound-ItR} achieves its maximum value, we get
\begin{equation*}
\Norm{\ItR w}{L^{\infty}(\In; Z)} \le \Lambda_{\ell + 1} \Norm{w}{L^{\infty}(\In; Z)},
\end{equation*}
where~$\Lambda_{\ell + 1}$ is the Lebesgue constant for the nodes~$\{s_i\}_{i = 1}^{\ell + 1}$, which grows as~$\mathcal{O}(\sqrt{\ell + 1})$ (see~\cite[Thm.~5.1]{Hager_Lebesgue:2017}).
\end{proof}
\subsection{Weak partial bound of the discrete solution\label{subsec:weak-partial-bound}}
Standard techniques provide $L^2(\Omega)$-bounds of discrete solutions to~\eqref{eq:kernel-linearized-problem} only at the discrete times~$\{\tn\}_{n = 0}^N$, which is not enough to bound the solution at all times for high-order approximations. 
In next lemma, we show the \emph{weak partial bound} obtained using standard techniques. In Section~\ref{sect:continuous-dependence} below, we use Lemma~\ref{lemma:weak-bound} to prove continuous dependence of any discrete solution to~\eqref{eq:kernel-linearized-problem} on the data.

\begin{lemma}[Weak partial bound of the discrete solution\label{lemma:weak-bound}]
For~$n = 1, \ldots, N$ and a sufficiently large penalty parameter~$\sigma$, any solution~$\uht \in \Zht$ to the linearized problem~\eqref{eq:kernel-linearized-problem} satisfies
\begin{alignat*}{3}
\frac12 \Norm{\uht(\cdot, \tn^{-})}{L^2(\Omega)^d}^2 & + \nu \sum_{m = 1}^n \int_{I_m} \Norm{\uht(\cdot, t) }{\calA, h}^2 \dt  + \frac12 \sum_{m = 1}^{n - 1} \Norm{\jump{\uht}_{m}}{L^2(\Omega)^d}^2 + \frac14 \Norm{\uht(\cdot, 0)}{L^2(\Omega)^d}^2  \\
& + \frac12 \sum_{m = 1}^{n} \sum_{F \in \FhI} \QtRm\Big( \int_F  \gamma_F(\wht) \, |\jump{\uht}|^2 \dS \Big) \\
& \le \Norm{\u_0}{L^2(\Omega)^d}^2 + \Norm{\f}{L^1(0, \tn; L^2(\Omega)^d)} \Norm{\uht}{L^{\infty}(0, \tn; L^2(\Omega)^d)}.
\end{alignat*}
\end{lemma}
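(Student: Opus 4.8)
The plan is to test the kernel formulation~\eqref{eq:kernel-linearized-problem} with a time-truncation of the discrete solution itself. Fix $n \in \{1, \ldots, N\}$ and define $\zht \in \Zht$ by $\zht := \uht$ on $(0, \tn)$ and $\zht := \bzero$ on $(\tn, T)$. Because membership in $\Zht$ only requires being slab-wise polynomial and divergence-free on each $\Qm$, with vanishing normal trace on $\SD$ (no continuity in time across the time nodes being imposed), zeroing out $\uht$ on the slabs beyond $I_n$ yields an admissible test function; this is the only admissibility check needed. Inserting $\zht$ into~\eqref{eq:kernel-linearized-problem} then splits the analysis into the four forms.

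First I would treat $\mht(\uht, \zht)$ by integrating by parts in time slab-by-slab. The crucial effect of the truncation is that the coupling term $-(\jump{\uht}_n, \zht(\cdot, \tn^+))_{\Omega}$ drops out, since $\zht(\cdot, \tn^+) = \bzero$; consequently the terminal slab contributes the clean end-cap $\tfrac12 \Norm{\uht(\cdot, \tn^-)}{L^2(\Omega)^d}^2$ rather than a time jump at $\tn$. Applying the jump identity $\tfrac12 \jump{w^2}_m - w(\cdot, t_m^+)\jump{w}_m = \tfrac12 \jump{w}_m^2$ for $m = 1, \ldots, n-1$, exactly as in the derivation of~\eqref{eq:identity-mht}, yields
\[
\mht(\uht, \zht) = \tfrac12 \Norm{\uht(\cdot, \tn^-)}{L^2(\Omega)^d}^2 + \tfrac12 \sum_{m=1}^{n-1} \Norm{\jump{\uht}_m}{L^2(\Omega)^d}^2 + \tfrac12 \Norm{\uht(\cdot, 0)}{L^2(\Omega)^d}^2.
\]

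Since neither $\aht$ nor $\cht$ couples different time slabs, evaluating them against $\zht$ simply truncates their slab-sums to $m = 1, \ldots, n$. Coercivity~\eqref{eq:coercivity-aht} (which requires $\sigma$ large, matching the hypothesis) bounds $\nu \aht(\uht, \zht)$ from below by $\nu \cA \sum_{m=1}^{n} \int_{I_m} \Norm{\uht(\cdot, t)}{\calA, h}^2 \dt$, i.e.\ the stated viscous term up to the coercivity constant~$\cA$, while the identity~\eqref{eq:identity-cht} (applied per slab) gives $\cht(\wht; \uht, \zht) = \tfrac12 \sum_{m=1}^{n} \sum_{F \in \FhI} \QtRm\big(\int_F \gamma_F(\wht) |\jump{\uht}|^2 \dS\big)$. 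On the right-hand side, the forcing term is controlled by H\"older in time, $(\f, \zht)_{\QT} = \int_0^{\tn} (\f, \uht)_{\Omega} \dt \le \Norm{\f}{L^1(0, \tn; L^2(\Omega)^d)} \Norm{\uht}{L^\infty(0, \tn; L^2(\Omega)^d)}$, and the initial term $(\uo, \uht(\cdot, 0))_{\Omega}$ is moved to the left and combined with the $\tfrac12 \Norm{\uht(\cdot, 0)}{L^2(\Omega)^d}^2$ produced by $\mht$ via the elementary bound $\tfrac12 \Norm{a}{L^2(\Omega)^d}^2 - (b, a)_{\Omega} \ge \tfrac14 \Norm{a}{L^2(\Omega)^d}^2 - \Norm{b}{L^2(\Omega)^d}^2$ (a rearrangement of $\tfrac12\Norm{a}{L^2(\Omega)^d}^2 - (b,a)_\Omega = \tfrac14\Norm{a}{L^2(\Omega)^d}^2 + \Norm{\tfrac12 a - b}{L^2(\Omega)^d}^2 - \Norm{b}{L^2(\Omega)^d}^2$); this leaves precisely the $\tfrac14 \Norm{\uht(\cdot, 0)}{L^2(\Omega)^d}^2$ on the left and $\Norm{\uo}{L^2(\Omega)^d}^2$ on the right. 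Collecting the four contributions gives the assertion.

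The computation is a by-now-standard energy argument, and there is no deep analytical difficulty internal to this lemma. The point requiring care --- and the reason the estimate is only a \emph{weak partial} bound --- is that the factor $\Norm{\uht}{L^\infty(0, \tn; L^2(\Omega)^d)}$ on the right is not yet controlled: testing with $\uht$ itself can deliver $L^2(\Omega)$-control only at the nodal time $\tn^-$, never the time-continuous $L^\infty$ bound that is needed when $\ell$ is large. Closing this inequality is exactly what will force the nonstandard weight functions $\varphi_n$ from~\eqref{lambdadef} together with the inverse estimate of Lemma~\ref{lemma:L2-Linfty} in the ensuing continuous-dependence argument. Hence, within the present proof, the only genuine care lies in the admissibility of the truncated test function and in the bookkeeping of the terminal slab, which is what produces the clean end-cap $\tfrac12\Norm{\uht(\cdot, \tn^-)}{L^2(\Omega)^d}^2$ and the bound up to $\tn^-$.
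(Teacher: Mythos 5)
Your proof is correct and essentially the same as the paper's: the paper simply takes $\zht = \uht$ after reducing "without loss of generality" to $n = N$, and then invokes \eqref{eq:identity-mht}, \eqref{eq:coercivity-aht}, \eqref{eq:identity-cht}, H\"older, and Young exactly as you do. Your truncated test function (equal to $\uht$ on $(0,\tn)$, zero afterwards) is precisely the argument that justifies the paper's WLOG reduction for general $n$, so the two proofs coincide in substance.
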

\begin{proof}
Without loss of generality, we show the result for the case~$n = N$.

Taking~$\zht = \uht$ in~\eqref{eq:kernel-linearized-problem}, we obtain
\begin{equation}
\label{eq:aux-identity-weak-bound}
\begin{split}
\mht(\uht, \uht) + \nu \aht(\uht, \uht) 
 & + \cht(\wht; \uht, \uht)  \\
 & = (\f, \uht)_{\QT} + (\uo, \uht(\cdot, 0))_{\Omega} \qquad \forall \zht \in \Zht.
\end{split}
\end{equation}

Using properties~\eqref{eq:identity-mht}, \eqref{eq:coercivity-aht}, and~\eqref{eq:identity-cht} of the forms~$\mht(\cdot, \cdot)$, $\aht(\cdot, \cdot)$, and~$\cht(\cdot; \cdot, \cdot)$, the left-hand side of~\eqref{eq:aux-identity-weak-bound} can be bounded from below as follows:
\begin{equation}
\label{eq:left-side-weak-bound}
    \begin{split}
    \mht(\uht, \uht) + \nu \aht(\uht, \uht) & + \cht(\wht; \uht, \uht) \\
    & \geq \SemiNorm{\uht}{\sf J}^2 + \nu \cA \Norm{\uht}{\calA}^2 + \SemiNorm{\uht}{\gamma, \wht}^2.
    \end{split}
\end{equation}

The following bound of the right-hand side of~\eqref{eq:aux-identity-weak-bound} follows by using the H\"older and the Young inequalities:
\begin{equation}
\label{eq:right-side-weak-bound}
\begin{split}
(\f, \uht)_{\QT} & + (\u_0, \uht(\cdot, 0) )_{\Omega} \\
& \le \Norm{\f}{L^1(0, T; L^2(\Omega)^d)} \Norm{\uht}{L^{\infty}(0, T; L^2(\Omega)^d)} + \Norm{\u_0}{L^2(\Omega)^d}^2 + \frac14 \Norm{\uht}{L^2(\SO)^d}^2\, .
\end{split}
\end{equation}

Inserting~\eqref{eq:left-side-weak-bound} and~\eqref{eq:right-side-weak-bound} in~\eqref{eq:aux-identity-weak-bound}, we obtain the desired result.
\end{proof}

\subsection{Uniform continuous dependence on the data of the discrete solution \label{sect:continuous-dependence}}
Henceforth, we use~$a \lesssim b$ to indicate the existence of a positive constant~$C$ independent of the meshsize~$h$, the maximum time step~$\tau$, and the viscosity constant~$\nu$ such that~$a \le C b$. 

We define the following energy norm in the space~$\Vht$:
\begin{equation} 
\label{def:energy-norm}
 \Tnorm{\uht}{\wht}^2 := \Norm{\uht}{L^{\infty}(0, T; L^2(\Omega)^d)}^2 
 + \nu \Norm{\uht}{\cal A}^2 + \SemiNorm{\uht}{\sf J}^2 + \SemiNorm{\uht}{\gamma, \wht}^2,
\end{equation}
where the last three terms are defined in~\eqref{def:A-norm}, \eqref{def:jump-term}, and~\eqref{def:gamma-norm}, respectively.
\begin{proposition}[Continuous dependence on the data]
\label{prop:continuous-dependence}
Any solution to the linearized space--time formulation~\eqref{eq:kernel-linearized-problem} satisfies the following bound:
\begin{equation}
\label{eq:continuous-dependence-final}
\begin{split}
 \Tnorm{\uht}{\wht}^2 \lesssim \Norm{\f}{L^1(0, T; L^2(\Omega)^d)}^2 + \Norm{\u_0}{L^2(\Omega)^d}^2,
\end{split}
\end{equation}
where the hidden constant depends only on~$\ell$ (thus being, in particular, independent of the final time~$T$ and the discrete function~$\wht$).
\end{proposition}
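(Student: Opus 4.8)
The plan is to split the energy norm~\eqref{def:energy-norm} into two groups and control each with a different device, where throughout $\|\cdot\|$ abbreviates the $L^2(\Omega)^d$-norm. The three \emph{dissipative} contributions $\nu\Norm{\uht}{\calA}^2$, $\SemiNorm{\uht}{\sf J}^2$, and $\SemiNorm{\uht}{\gamma,\wht}^2$ are already delivered, up to fixed constants, by the weak partial bound of Lemma~\ref{lemma:weak-bound} evaluated at the final index $n=N$: its left-hand side contains $\nu\Norm{\uht}{\calA}^2$, the full sum of time jumps together with the endpoint and initial $L^2$-norms (hence $\SemiNorm{\uht}{\sf J}^2$), and the Gauss--Radau facet sum (hence $\SemiNorm{\uht}{\gamma,\wht}^2$), all bounded by $\Norm{\u_0}{L^2(\Omega)^d}^2+\Norm{\f}{L^1(0,T;L^2(\Omega)^d)}\,M$ with $M:=\Norm{\uht}{L^\infty(0,T;L^2(\Omega)^d)}$. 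Thus the only genuinely new quantity is $M$, and the crux is to bound it without invoking inverse powers of $\tau$, $h$, or $\nu$.

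To control $M$ I would test~\eqref{eq:kernel-linearized-problem} slab by slab with the nonstandard function $\zht^{(n)}:=\ItR(\varphi_n\uht)$ on $\In$, extended by zero elsewhere; this lies in $\Zht$ because $\varphi_n$ is scalar in time and $\ItR$ interpolates at nodes where $\uht$ is divergence-free with vanishing normal trace. On $\In$ only the $m=n$ volume term and the single inflow jump at $\tnmo$ (the initial term, for $n=1$) survive in $\mht$. Using~\eqref{eq:property-interpolant-time-1}, the identity $\uht\,\dpt\uht=\tfrac12\dpt|\uht|^2$, exactness of Gauss--Radau up to degree $2\ell$, and integration by parts in time with $\varphi_n(\tnmo)=1$, $\varphi_n(\tn)=\tfrac12$, $\varphi_n'=-\lambda_n$, the mass term produces $\tfrac14\|\uht(\cdot,\tn^-)\|^2+\tfrac{\lambda_n}{2}\Norm{\uht}{L^2(\Qn)}^2$ plus a boundary contribution at $\tnmo$ that, after a Young inequality, costs only $\tfrac12\|\uht(\cdot,\tnmo^-)\|^2$. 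The viscous term is nonnegative by Lemma~\ref{lemma:bilinear-form-varphi}, and the convection term equals $\QtRn\big(\varphi_n\,\tfrac12\sum_{F\in\FhI}\int_F\gamma_F(\wht)|\jump{\uht}|^2\big)\ge0$ — the same node-wise computation as in~\eqref{eq:identity-cht}, now carrying the nonnegative scalar weight $\varphi_n$ — so both may be discarded. The data term is bounded by $\CSI\Norm{\f}{L^1(\In;L^2(\Omega)^d)}\,M$ via Lemma~\ref{lemma:stab-ItR} and $|\varphi_n|\le1$. Finally the inverse estimate of Lemma~\ref{lemma:L2-Linfty} turns $\tfrac{\lambda_n}{2}\Norm{\uht}{L^2(\Qn)}^2=\tfrac{1}{4\tau_n}\Norm{\uht}{L^2(\In;L^2(\Omega)^d)}^2$ into $\tfrac{1}{4\Cinv}\Norm{\uht}{L^\infty(\In;L^2(\Omega)^d)}^2$.

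The decisive step is then to close the recursion without a spurious factor $N$. Rather than summing the per-slab inequalities (which would accumulate $N$ copies of the inflow norm), I would bound the single inflow term directly by the telescoped weak bound of Lemma~\ref{lemma:weak-bound}, namely $\tfrac12\|\uht(\cdot,\tnmo^-)\|^2\le\Norm{\u_0}{L^2(\Omega)^d}^2+\Norm{\f}{L^1(0,T;L^2(\Omega)^d)}\,M$, and then take the maximum over $n=1,\dots,N$. This yields a scalar quadratic inequality of the form $\tfrac{1}{4\Cinv}M^2\le\Norm{\u_0}{L^2(\Omega)^d}^2+(1+\CSI)\Norm{\f}{L^1(0,T;L^2(\Omega)^d)}\,M$, from which a Young/quadratic-formula absorption gives $M^2\lesssim\Norm{\u_0}{L^2(\Omega)^d}^2+\Norm{\f}{L^1(0,T;L^2(\Omega)^d)}^2$, with a constant depending only on $\ell$ through $\Cinv$ and $\CSI$.

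Combining this with the dissipative terms from Lemma~\ref{lemma:weak-bound} (and one further Young inequality on $\Norm{\f}{L^1(0,T;L^2(\Omega)^d)}\,M$) yields~\eqref{eq:continuous-dependence-final}. I expect the main obstacle to be precisely the factor-$2$ mismatch between the $\tfrac14\|\uht(\cdot,\tn^-)\|^2$ produced and the $\tfrac12\|\uht(\cdot,\tnmo^-)\|^2$ consumed on each slab: it blocks a naive telescoping and is the reason the $L^\infty$-in-time bound must be assembled as a maximum fed by the independently telescoping weak bound, rather than by a direct energy sum. A secondary point requiring care is verifying that $\zht^{(n)}$ indeed remains in $\Zht$ and that every occurrence of $\ItR$ collapses cleanly onto the Gauss--Radau quadrature, so that no uncontrolled time derivative of the degree-$(\ell+1)$ product $\varphi_n\uht$ ever appears.
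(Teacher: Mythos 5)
Your proposal is correct and takes essentially the same route as the paper's own proof: the same slab-wise test function $\ItR(\varphi_n \uht)$, the same lower bounds for the mass, viscous, and convective terms via Lemmas~\ref{lemma:bilinear-form-varphi} and~\ref{lemma:stab-ItR}, the same absorption of the inflow term $\tfrac12\|\uht(\cdot,\tnmo^-)\|_{L^2(\Omega)^d}^2$ through the weak partial bound of Lemma~\ref{lemma:weak-bound}, and the same closure by the inverse estimate of Lemma~\ref{lemma:L2-Linfty}, a maximizing slab index, and a Young inequality. The only difference is cosmetic (you bound $M$ first and re-attach the dissipative terms afterwards, while the paper sums everything before taking the maximum), and your diagnosis of the factor-$2$ mismatch that forbids naive telescoping matches exactly why the paper structures the argument this way.
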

\begin{proof}
Let~$n \in \{1, \ldots, n\}$ and~$\uht$ be a solution to the linearized problem~\eqref{eq:kernel-linearized-problem}. We define the following test functions  (c.f. \eqref{lambdadef}):
\begin{equation*}
\zht^{\star, (n)} {}_{|_{Q_m}}:= \begin{cases}
\ItR(\varphi_n \uht) & \text{ if } m = n, \\
0 & \text{ otherwise}.
\end{cases}
\end{equation*}
Taking~$\zht^{\star, (n)}$ as the test function in~\eqref{eq:kernel-linearized-problem}, we have
\begin{equation}
\label{eq:aux-identity-continuous-dependence}
\begin{split}
\big(\dpt \uht, \ItR(\varphi_n \uht) \big)_{\Qn} & - \big(\jump{\uht}_{n-1}, \ItR(\varphi_n \uht)(\cdot, \tnmo^+) \big)_{\Omega} + \nu \aht\big(\uht, \zht^{\star, (n)} \big)\\
& + \cht\big(\wht; \uht, \zht^{\star, (n)}\big) = \big(\f, \ItR(\varphi_n \uht) \big)_{\Qn},
\end{split}
\end{equation}
where, for~$n = 1$, we have used the abuse of notation~$\uht(\cdot, 0^{-}) := \u_0$.

Using property~\eqref{eq:property-interpolant-time-2} of the interpolant~$\ItR$ and the identity~$\ItR w(\cdot, \tnmo^+) = w(\cdot, \tnmo^+)$, the fact that~$\varphi_n(\tnmo) = 1$, $\varphi_n(\tn) = 1/2$, and~$\varphi'(t) = -\lambda_n$, integration by parts in time, and the identity
\begin{equation*}
\frac12 w(\cdot, \tnmo^+)^2 + \jump{w}_{n - 1} w(\cdot, \tnmo^+) = - \frac12 \jump{w}_{n - 1}^2 + \frac12 w(\cdot, \tnmo^-)^2,
\end{equation*}
we obtain the following equation:
\begin{alignat}{3}
\nonumber
\big(\dpt \uht, & \, \ItR(\varphi_n \uht) \big)_{\Qn} - \big(\jump{\uht}_{n-1}, \ItR(\varphi_n \uht)(\cdot, \tnmo^+) \big)_{\Omega} \\
\nonumber
& = \big(\dpt \uht, \varphi_n \uht \big)_{\Qn} - \big(\jump{\uht}_{n-1}, \uht(\cdot, \tnmo^+) \big)_{\Omega} \\
\nonumber
& = \frac12 \int_{\Qn} \dpt (\varphi_n \uht^2) \dV - \frac12 \int_{\Qn} \varphi' \uht^2 \dV - \big(\jump{\uht}_{n-1}, \uht(\cdot, \tnmo^+) \big)_{\Omega} \\
\nonumber
& = \frac14 \Norm{\uht(\cdot, \tn^-)}{L^2(\Omega)^d}^2 - \Big(\frac12 \uht(\cdot, \tnmo^+) + \jump{\uht}_{n - 1}, \uht(\cdot, \tnmo^+) \Big)_{\Omega} + \frac{\lambda_n}{2} \Norm{\uht}{L^2(\Qn)^d}^2 \\
\label{eq:mht-ItR}
& = \frac14 \Norm{\uht(\cdot, \tn^-)}{L^2(\Omega)^d}^2 + \frac12 \Norm{\jump{\uht}_{n - 1}}{L^2(\Omega)^d}^2 - \frac12 \Norm{\uht(\cdot, \tnmo^-)}{L^2(\Omega)^d}^2  + \frac{\lambda_n}{2} \Norm{\uht}{L^2(\Qn)^d}^2,
\end{alignat}
where the second and third terms become~$(1/4) \Norm{\uht}{L^2(\SO)^d}^2 - \Norm{u_0}{L^2(\Omega)^d}^2$ in the case~$n = 1$.

Moreover, as an immediate consequence of Lemma~\ref{lemma:bilinear-form-varphi}, we have
\begin{equation}
\label{eq:aht-ItR}
\nu \aht\big(\uht, \zht^{\star, (n)}\big) \geq \frac{\nu}{2} \cA \int_{\In} \Norm{\uht(\cdot, t)}{\calA,h}^2 \dt \geq 0.
\end{equation}

Then, using property~\eqref{eq:property-interpolant-time-3} and proceeding as for~\eqref{eq:identity-cht}, we easily get
\begin{alignat}{3}
\nonumber
\cht( \wht; \uht, \zht^{\star, (n)}) & = \frac12 \sum_{F \in \FhI} \QtRn \Big( \varphi_n \int_F \gamma_F(\wht) \, |\jump{\uht}|^2\dS \Big) \\
\label{eq:cht-ItR}
& \geq \frac14 \sum_{F \in \FhI} \QtRn \Big( \int_F \gamma_F(\wht) \, |\jump{\uht}|^2\dS \Big) \geq 0.
\end{alignat}

Combining~\eqref{eq:mht-ItR}, \eqref{eq:aht-ItR}, and~\eqref{eq:cht-ItR} with identity~\eqref{eq:aux-identity-continuous-dependence}, using the H\"older inequality, the stability of the interpolant~$\ItR$ in Lemma~\ref{lemma:stab-ItR}, and the weak partial bound in Lemma~\ref{lemma:weak-bound} for~$n - 1$, we get
\begin{alignat}{3}
\nonumber
\frac14 \Norm{\uht(\cdot, \tn^-)}{L^2(\Omega)^d}^2 & + \frac12 \Norm{\jump{\uht}_{n - 1}}{L^2(\Omega)^d}^2 + \frac{\lambda_n}{2} \Norm{\uht}{L^2(\Qn)^d}^2 \\
\nonumber
& \le \frac12 \Norm{\uht(\cdot, \tnmo^-)}{L^2(\Omega)^d}^2 + (\f, \ItR (\varphi_n \uht) )_{\Qn} \\
\nonumber
& \le \frac12 \Norm{\uht(\cdot, \tnmo^-)}{L^2(\Omega)^d}^2 + \Norm{\f}{L^1(\In; L^2(\Omega)^d)} \Norm{\ItR(\varphi_n \uht)}{L^{\infty}(\In; L^2(\Omega)^d)} \\
\nonumber
& \le \frac12 \Norm{\uht(\cdot, \tnmo^-)}{L^2(\Omega)^d}^2 + \CSI \Norm{\f}{L^1(\In; L^2(\Omega)^d)} \Norm{\uht}{L^{\infty}(\In; L^2(\Omega)^d)} \\
\nonumber
& \le \Norm{\u_0}{L^2(\Omega)^d}^2 + \Norm{\f}{L^1(0, \tnmo; L^2(\Omega)^d)} \Norm{\uht}{L^{\infty}(0, \tn; L^2(\Omega)^d)} \\
\nonumber
& \quad + \CSI \Norm{\f}{L^1(\In; L^2(\Omega)^d)} \Norm{\uht}{L^{\infty}(\In; L^2(\Omega)^d)} \\
\label{eq:continuous-dependence-bound-tn}
& \lesssim \Norm{\u_0}{L^2(\Omega)^d}^2 + \Norm{\f}{L^1(0, \tn; L^2(\Omega)^d)} \Norm{\uht}{L^{\infty}(0, \tn; L^2(\Omega)^d)},
\end{alignat}
where the hidden constant is independent of the time interval~$\In$ and depends only on the degree of approximation~$\ell$.

From the inverse estimate in Lemma~\ref{lemma:L2-Linfty}, we deduce that
\begin{equation}
\label{eq:L2-Linfty-uht}
\frac{1}{4 \Cinv^2} \Norm{\uht}{L^{\infty}(\In; L^2(\Omega)^d)}^2 \le \frac{\lambda_n}{2} \Norm{\uht}{L^2(\Qn)^d}^2.
\end{equation}

Summing~\eqref{eq:continuous-dependence-bound-tn} and the bound in Lemma~\ref{lemma:weak-bound} for~$n = N$, and using~\eqref{eq:L2-Linfty-uht}, we obtain the following bound valid for all~$n \in \{1,2,\ldots,N\}$:
\begin{equation}
\label{eq:sum-estimates-continuous-dependence}
\begin{split}
\Norm{\uht}{L^{\infty}(\In; L^2(\Omega)^d)}^2 & + \frac12 \Norm{\uht(\cdot, \tn^{-})}{L^2(\Omega)^d}^2 + \nu \Norm{\uht}{\calA}^2 + \SemiNorm{\uht}{\sf J}^2 + \SemiNorm{\uht}{\gamma, \wht}^2 \\
& \lesssim \Norm{\u_0}{L^2(\Omega)^d}^2 + \Norm{\f}{L^1(0, T; L^2(\Omega)^d)} \Norm{\uht}{L^{\infty}(0, T; L^2(\Omega)^d)},
\end{split}
\end{equation}
where the hidden constant is independent of the final time~$T$, the time interval~$\In$, and the discrete function~$\wht$.

Bound~\eqref{eq:continuous-dependence-final} then follows by taking the index~$n \in \{1, \ldots, N\}$ where the left-hand side of~\eqref{eq:sum-estimates-continuous-dependence} takes its maximum value, and using the Young inequality.
\end{proof}

\begin{remark}[Well-posedness of the linearized problem~\eqref{eq:kernel-linearized-problem}]
\label{rem:existence-linearized}
Uniqueness of the solution to the linearized space--time formulation~\eqref{eq:kernel-linearized-problem} is an immediate consequence of Proposition~\ref{prop:continuous-dependence}. Moreover, since~\eqref{eq:kernel-linearized-problem} is equivalent to a square linear system, existence follows from uniqueness.
\eremk
\end{remark}

\begin{remark}[Helmholtz-Hodge decomposition]
Since~$\f \in L^1(0, T; L^2(\Omega)^d)$ and $\Zht \subset \Z$, the linearized space--time formulation~\eqref{eq:kernel-linearized-problem} and the continuous dependence on the data result in Proposition~\ref{prop:continuous-dependence} can be written replacing~$\f$ by its Helmholtz-Hodge projection~$\calH(\f)$, thus reflecting the pressure-robustness of the scheme (see~\cite[Lemma 2.6]{John_etal:2017} and~\cite[Thm.~3.3]{Gauger_Linke_Schroeder:2019}).
\eremk
\end{remark}

\subsection{Fixed-point argument\label{subsec:fixed-point}}
We denote by~$\Clin > 0$ the hidden constant in the statement of Proposition~\ref{prop:continuous-dependence}, which we recall is independent of the meshsize~$h$, the maximum time step~$\tau$, the viscosity~$\nu$, the final time~$T$, and the discrete function~$\wht$.

\begin{theorem}[Existence of discrete solutions]
\label{thm:existence-discrete-solutions}
Given~$\f \in L^1(0, T; L^2(\Omega)^d)$ and~$\u_0 \in \Z$, there exists at least a solution~$\uht \in \Zht$ to the space--time formulation~\eqref{eq:space-time-formulation}, which satisfies
\begin{equation}
\label{eq:continuous-dependence-nonlinear}
\Tnorm{\uht}{\uht}^2 \le \Clin \big(\Norm{\f}{L^1(0, T; L^2(\Omega)^d)}^2 + \Norm{\u_0}{L^2(\Omega)^d}^2 \big).
\end{equation}
\end{theorem}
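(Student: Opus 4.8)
The plan is to recast the nonlinear problem as a fixed-point equation for the solution operator of the linearized problem~\eqref{eq:kernel-linearized-problem}, and to invoke Brouwer's fixed-point theorem in the finite-dimensional space~$\Zht$. By Remark~\ref{rem:existence-linearized}, for each~$\wht \in \Zht$ the linearized problem~\eqref{eq:kernel-linearized-problem} admits a unique solution; this defines a map~$S : \Zht \to \Zht$, $S(\wht) := \uht$, whose fixed points are exactly the solutions to the kernel formulation~\eqref{eq:kernel-space-time-problem}, hence to~\eqref{eq:space-time-formulation}. It thus suffices to exhibit a closed ball of~$\Zht$ that~$S$ maps into itself and on which~$S$ is continuous, and then to read off the bound from Proposition~\ref{prop:continuous-dependence}.

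For the self-mapping property I would use Proposition~\ref{prop:continuous-dependence}: for every~$\wht \in \Zht$ the image~$\uht = S(\wht)$ satisfies~$\Tnorm{\uht}{\wht}^2 \le \Clin\big(\Norm{\f}{L^1(0,T;L^2(\Omega)^d)}^2 + \Norm{\u_0}{L^2(\Omega)^d}^2\big)$. The only~$\wht$-dependent contribution to the energy norm~\eqref{def:energy-norm} is the nonnegative seminorm~$\SemiNorm{\uht}{\gamma,\wht}^2$; discarding it leaves the~$\wht$-independent quantity $\|\uht\|_*^2 := \Norm{\uht}{L^\infty(0,T;L^2(\Omega)^d)}^2 + \nu\Norm{\uht}{\calA}^2 + \SemiNorm{\uht}{\sf J}^2$ bounded by the same right-hand side. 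Since the~$L^\infty(0,T;L^2(\Omega)^d)$ term alone is a genuine norm on~$\Zht$, the map~$\|\cdot\|_*$ is a norm, so the closed ball~$\bar B_R := \{\vht \in \Zht : \|\vht\|_* \le R\}$ with~$R^2 := \Clin\big(\Norm{\f}{L^1(0,T;L^2(\Omega)^d)}^2 + \Norm{\u_0}{L^2(\Omega)^d}^2\big)$ is compact and convex, and the bound above gives~$S(\Zht) \subseteq \bar B_R$, hence \emph{a fortiori}~$S(\bar B_R) \subseteq \bar B_R$.

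The main obstacle is the continuity of~$S$. Here I would exploit the finite dimensionality and the linear dependence of the solution of~\eqref{eq:kernel-linearized-problem} on the right-hand side. Writing~\eqref{eq:kernel-linearized-problem} in coordinates with respect to a basis of~$\Zht$ as a square linear system~$M(\wht)\,U = F$, the matrix~$M(\wht)$ assembles~$\mht + \nu\aht + \cht(\wht;\cdot,\cdot)$, while the vector~$F$ (encoding~$(\f,\zht)_{\QT} + (\uo,\zht(\cdot,0))_{\Omega}$) does not depend on~$\wht$. The map~$\wht \mapsto M(\wht)$ is continuous: $\mht$ and~$\aht$ are independent of~$\wht$, and~$\cht(\wht;\cdot,\cdot)$ depends on~$\wht$ only through the Gauss-Radau values of the products~$(\Nabla\uht)\wht$ and~$(\wht\cdot\bnF)\jump{\uht}$, which are polynomial in the entries of~$\wht$, and through the penalty weight~$\gamma_F(\wht) = \max\{c_S, \Norm{\wht\cdot\bnF}{L^\infty(F)}\}$, which is continuous in~$\wht$ as the maximum of a constant and a continuous seminorm on a finite-dimensional space. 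Since~$M(\wht)$ is invertible for every~$\wht$ by the uniqueness in Proposition~\ref{prop:continuous-dependence}, and matrix inversion is continuous on the open set of invertible matrices, the composition~$\wht \mapsto U = M(\wht)^{-1}F$, i.e.~$S$, is continuous on~$\Zht$.

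With the self-map and continuity properties established, Brouwer's fixed-point theorem yields a fixed point~$\uht \in \bar B_R$ of~$S$, which is therefore a solution to~\eqref{eq:space-time-formulation}. Finally, estimate~\eqref{eq:continuous-dependence-nonlinear} follows by applying Proposition~\ref{prop:continuous-dependence} to this solution with the choice~$\wht = \uht$, since the linearized problem~\eqref{eq:kernel-linearized-problem} with coefficient~$\uht$ then coincides with the nonlinear kernel formulation~\eqref{eq:kernel-space-time-problem}.
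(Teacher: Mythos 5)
Your proof is correct and follows essentially the same route as the paper: both define the solution map of the linearized problem~\eqref{eq:kernel-linearized-problem}, use Proposition~\ref{prop:continuous-dependence} together with a $\wht$-independent norm to obtain a self-map of a ball in~$\Zht$ (you drop the $\gamma$-seminorm, the paper keeps it with the zero coefficient, i.e.\ $\Tnorm{\cdot}{{\bf 0}}$), and conclude by a fixed-point theorem (your Brouwer and the paper's Schauder are equivalent on the finite-dimensional space~$\Zht$). If anything, your explicit verification that the solution map is continuous --- via continuity of $\wht \mapsto M(\wht)$, invertibility from uniqueness, and continuity of matrix inversion --- supplies a hypothesis of the fixed-point theorem that the paper's proof of this theorem leaves implicit (it is only spelled out later, for the semi-implicit variant in Theorem~\ref{thm:existence-discrete-solutions-semi-implicit}).
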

\begin{proof}
We define the ball
\begin{equation*}
\calB := \left\{\wht \in \Zht \, : \, \Tnorm{\wht}{{\bf 0}}^2 \le \Clin \big(\Norm{\f}{L^1(0, T; L^2(\Omega)^d)}^2 + \Norm{\u_0}{L^2(\Omega)^d}^2 \big) \right\},
\end{equation*}
and the map~$\Phi : \Zht \to \Zht$, which assigns~$\wht \mapsto \uht$ as follows: given~$\wht \in \Zht$, $\Phi(\wht) := \uht \in \Zht$, where~$\uht$ is the solution to the linearized space--time formulation~\eqref{eq:kernel-linearized-problem} with discrete coefficient~$\wht$.

As discussed in Remark~\ref{rem:existence-linearized}, Proposition~\ref{prop:continuous-dependence} guarantees that~$\Phi$ is well defined. Moreover, since the constant~$\Clin$ does not depend on~$\wht$  and noting that $\Tnorm{\wht}{{\bf 0}} \le \Tnorm{\wht}{{\wht}}$ for any $\wht \in \Zht$, we also have~$\Phi(\Zht) \subset \calB$. Therefore, by the Schauder fixed-point theorem (see, e.g., \cite[Thm.~4.1.1 in Ch.~4]{Smart:1974}), the map~$\Phi$ has at least a fixed point~$\uht \in \calB$, which solves the space--time formulation~\eqref{eq:kernel-space-time-problem} and satisfies the continuous dependence on the data in~\eqref{eq:continuous-dependence-nonlinear}.
\end{proof}

\section{Convergence analysis}\label{sec:conv}
In this section, we derive \emph{a priori} error estimates for the space--time formulation~\eqref{eq:space-time-formulation}.

\subsection{Some tools for the convergence analysis}
We first recall the definition of the projection operator in~\cite[Eq.~(12.9) in Ch.~12]{Thome_book:1997} (see also~\cite[Def.~3.1]{Schotzau_Schwab:2000}), which is common in the analysis of DG-time discretizations of parabolic problems.

\begin{definition}[Projection~$\Pt$]
\label{def:Pt}
The projection operator~$\Pt: H^1(0, T) \rightarrow \Pp{\ell}{\Tt}$ is defined as follows: for any~$v \in H^1(0, T)$,  the projection~$\Pt v$ satisfies
\begin{subequations}
\begin{alignat}{3}
\label{eq:Pt-1}
\Pt v (\tn^-) - v(\tn) & = 0, \\
\label{eq:Pt-2}
\big( (\Id - \Pt) v, q_{\ell - 1} \big)_{\In} & = 0 & & \qquad \forall q_{\ell - 1} \in \Pp{\ell - 1}{\In},
\end{alignat}
\end{subequations}
for~$n = 1, \ldots, N$. In particular, if~$\ell = 0$, condition~\eqref{eq:Pt-2} is omitted. 

Given a Banach space~$(Z, \Norm{\cdot}{Z})$ with $Z \subseteq L^1(\Omega)$, the definition of~$\Pt$ can be extended to functions in~$H^1(0, T; Z) \hookrightarrow C^0([0, T]; Z)$ by requiring that~$\Pt : H^1(0, T; Z) \rightarrow P_{\ell}(\Tt) \otimes Z$ and the left-hand sides of~\eqref{eq:Pt-1} and~\eqref{eq:Pt-2} are equal to zero almost everywhere in~$\Omega$.
\end{definition}

\begin{lemma}[Stability of~$\Pt$]
\label{lemma:stab-Pt}
Let~$(Z, \Norm{\cdot}{Z})$ with~$Z \subseteq L^1(\Omega)$ be a given Banach space. For all~$v \in H^1(0, T; Z)$ and~$n \in \{1, \ldots, N\}$, it holds
\begin{equation*}
\Norm{\Pt v}{L^{\infty}(\In; Z)} \lesssim \Norm{v}{L^{\infty}(\In; Z)}.
\end{equation*}
\end{lemma}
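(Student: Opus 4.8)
The plan is to prove the $L^{\infty}(\In; Z)$-stability of the projection operator $\Pt$ by reducing the problem to a single reference interval via affine scaling, then exploiting the finite-dimensionality of $\Pp{\ell}{\In}$ to deduce the bound from an equivalence of norms on a fixed-dimensional polynomial space. The key observation is that, on each interval $\In$ separately, the defining conditions~\eqref{eq:Pt-1}--\eqref{eq:Pt-2} determine $\Pt v{}_{|_{\In}}$ as a \emph{linear} function of $v{}_{|_{\In}} \in H^1(\In; Z)$: indeed, $\Pt v$ has $\ell + 1$ degrees of freedom in time (valued in $Z$), and the $\ell + 1$ conditions~\eqref{eq:Pt-1}--\eqref{eq:Pt-2} form a unisolvent system. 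This makes $\Pt$ a bounded linear projection onto $\Pp{\ell}{\In} \otimes Z$ on each slab.

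First I would pass to the reference interval $\hat I := (0, 1)$ through the affine map $t = \tnmo + \tau_n \hat t$. Since the conditions~\eqref{eq:Pt-1}--\eqref{eq:Pt-2} are invariant under this scaling (the interpolation condition at the right endpoint is preserved, and the orthogonality against $\Pp{\ell-1}{\In}$ transforms to orthogonality against $\Pp{\ell-1}{\hat I}$, the Jacobian factor being harmless), the projection commutes with the pullback. Crucially, the $L^{\infty}$ norm is scale invariant: $\Norm{\Pt v}{L^{\infty}(\In; Z)} = \Norm{\widehat{\Pt v}}{L^{\infty}(\hat I; Z)}$ and likewise for $v$. Thus it suffices to prove the estimate on $\hat I$ with a constant depending only on $\ell$, which immediately yields the uniform (in $n$ and $\tau_n$) bound claimed in the lemma.

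Next I would establish the estimate on $\hat I$. Writing $\hat{\Pt} \hat v = \sum_{i=0}^{\ell} c_i(\cdot)\, \psi_i(\hat t)$ in a fixed basis $\{\psi_i\}$ of $\Pp{\ell}{\hat I}$, the coefficients $c_i \in Z$ are obtained by solving the fixed $(\ell+1)\times(\ell+1)$ linear system coming from~\eqref{eq:Pt-1}--\eqref{eq:Pt-2}; the right-hand sides of this system are the point evaluation $\hat v(1) \in Z$ and the moments $(\hat v, \hat q_{j})_{\hat I} \in Z$ against a basis of $\Pp{\ell-1}{\hat I}$. Each of these functionals is bounded on $H^1(\hat I; Z) \hookrightarrow C^0([0,1]; Z)$ by $\Norm{\hat v}{L^{\infty}(\hat I; Z)}$ (the point value trivially, the moments by H\"older with $\Norm{\hat q_j}{L^1(\hat I)}$). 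Since the system matrix is invertible and of fixed size, $\Norm{c_i}{Z} \lesssim_{\ell} \Norm{\hat v}{L^{\infty}(\hat I; Z)}$, and therefore
\begin{equation*}
\Norm{\hat{\Pt} \hat v}{L^{\infty}(\hat I; Z)} \le \sum_{i = 0}^{\ell} \Norm{c_i}{Z}\, \Norm{\psi_i}{L^{\infty}(\hat I)} \lesssim_{\ell} \Norm{\hat v}{L^{\infty}(\hat I; Z)},
\end{equation*}
with a constant depending only on $\ell$.

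I expect the main subtlety to be handling the vector-valued (Bochner) setting cleanly rather than the analysis itself: one must be careful that point evaluation $\hat v \mapsto \hat v(1)$ is well defined and bounded into $Z$, which is exactly guaranteed by the embedding $H^1(\hat I; Z) \hookrightarrow C^0([0,1]; Z)$ invoked in Definition~\ref{def:Pt}, and that the unisolvence argument producing the coefficients $c_i$ works verbatim with scalar coefficients replaced by elements of $Z$ (the system matrix is scalar, so it acts componentwise and its inverse has $\ell$-dependent entries). The case $\ell = 0$, where condition~\eqref{eq:Pt-2} is dropped and $\Pt$ reduces to the constant equal to $\hat v(1)$, should be noted as a trivial special case. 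No genuine obstacle arises because everything reduces to a finite-dimensional, scale-invariant estimate; the only care needed is in the bookkeeping of the Bochner-valued functionals.
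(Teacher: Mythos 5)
Your proof is correct, but it takes a genuinely different route from the paper's. The paper works directly on $\In$ with an explicit Legendre decomposition: from the orthogonality condition~\eqref{eq:Pt-2} it deduces $\Pi_{\ell-1}^t \Pt v = \Pi_{\ell-1}^t v$, writes $\Pt v = \Pi_{\ell-1}^t v + \alpha(\cdot) L_{\ell}$, identifies $\alpha(\cdot) = (v - \Pi_{\ell-1}^t v)(\cdot, \tn^-)$ from the endpoint condition~\eqref{eq:Pt-1}, and then bounds everything using $\Norm{L_i}{L^{\infty}(\In)} = 1$ and $\Norm{L_i}{L^1(\In)}/\Norm{L_i}{L^2(\In)}^2 \le \sqrt{(2i+1)/2}$, obtaining the explicit constant $\sqrt{2}\,\ell^{3/2} + 1$. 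You instead pull back to the reference interval (where the $L^{\infty}$ norm is scale invariant and the defining conditions transform covariantly) and bound the coefficients through the inverse of the fixed $(\ell+1)\times(\ell+1)$ system matrix, after noting that the data functionals (endpoint value and moments against $\Pp{\ell - 1}{\In}$) are controlled by the $L^{\infty}(\In; Z)$ norm. Both arguments are sound, give constants depending only on $\ell$, and handle the Bochner-valued setting correctly. The paper's computation buys an explicit constant and, as a by-product, the identity $\Pi_{\ell-1}^t \Pt = \Pi_{\ell-1}^t$, which is the same structural fact you encode abstractly in your linear system; your version is more modular, applying verbatim to any projection defined by unisolvent, $L^{\infty}$-bounded conditions, at the price of a non-explicit constant and of asserting rather than verifying unisolvence (which does hold: a polynomial in $\Pp{\ell}{\In}$ orthogonal to $\Pp{\ell - 1}{\In}$ is a multiple of $L_{\ell}$, and $L_{\ell}(\tn) = 1 \neq 0$; this is also implicit in the paper's Definition~\ref{def:Pt}, which presupposes that $\Pt$ is well defined).
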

\begin{proof}
Given~$r \in \IN$, we denote by~$\Pi_r^t : L^1(0, T; Z) \to P_r(\Tt) \otimes Z$ the~$L^2(0, T)$-orthogonal projection in the space~$P_r(\Tt)$, defined by imposing that the orthogonality condition holds true almost everywhere in~$\Omega$.
Let~$v \in H^1(0, T; Z) \hookrightarrow L^{\infty}(0, T; Z)$. Due to~\eqref{eq:Pt-2}, it can be easily seen that~$\Pi_{\ell - 1}^t \Pt v = \Pi_{\ell - 1}^t v$. 
Therefore, we can write
\begin{equation*}
\begin{split}
\Pt v(\cdot, t) & = \Pi_{\ell - 1}^t v (\cdot, t) + \alpha(\cdot) L_{\ell}(t) = \sum_{i = 0}^{\ell - 1} \frac{L_i(t)}{\Norm{L_i}{L^2(\In)}^2} \int_{\In} v(\cdot, s) L_i(s) \ds  + \alpha(\cdot) L_{\ell}(t) \qquad \forall t \in \In,
\end{split}
\end{equation*}
where~$\alpha(\cdot) = (v - \Pi_{\ell - 1} v)(\cdot, \tn^-)$ due to~\eqref{eq:Pt-1}, and~$L_{\ell}$ is the~$\ell$th Legendre polynomial in the interval~$\In$. 
Since~$L_i(\tn^-) = 1$ for all~$i \in \IN$, $\alpha(\cdot)$ can be written as
\begin{equation*}
\alpha(\cdot) = v(\cdot, \tn^-) - \sum_{i = 0}^{\ell - 1} \frac{1}{\Norm{L_i}{L^2(\In)}^2} \int_{\In} v(\cdot, s) L_i(s) \ds .
\end{equation*}

Therefore, using the convexity of~$\Norm{\cdot}{Z}$, the Jensen and the triangle inequalities, and the uniform bound~$\Norm{L_i}{L^{\infty}(\In)} = 1$, we get
\begin{alignat*}{3}
\Norm{\Pt v(\cdot, t)}{Z} \le \sum_{i = 0}^{\ell - 1} \frac{2}{\Norm{L_i}{L^2(\In)}^2} \int_{\In} \Norm{v(\cdot, s)}{Z} |L_i(s)| \ds  + \Norm{v(\cdot, \tn^-)}{Z} \qquad \forall t \in \In,
\end{alignat*}
which, combined with the H\"older inequality and the fact that~$\Norm{L_i}{L^1(\In)}/\Norm{L_i}{L^2(\In)}^2 \le \sqrt{(2i + 1)/2}$, leads to
\begin{alignat}{3}
\nonumber
\Norm{\Pt v(\cdot, t)}{Z} & \le \Norm{v}{L^{\infty}(\In; Z)} \Big( 2\sum_{i = 0}^{\ell - 1} \frac{\Norm{L_i}{L^1(\In)}}{\Norm{L_i}{L^2(\In)}^2} + 1\Big) 
\le \Norm{v}{L^{\infty}(\In; Z)} \Big( \sqrt{2}\sum_{i = 0}^{\ell - 1} \sqrt{2i + 1} + 1\Big) \\
\label{eq:bound-Ptv-Z}
& \le (\sqrt{2} \ell^{3/2} + 1) \Norm{v}{L^{\infty}(\In; Z)}.
\end{alignat}
The result then follows by taking~$t \in [\tnmo, \tn]$ where the left-hand side of~\eqref{eq:bound-Ptv-Z} achieves its maximum value.

\end{proof}

We now prove some approximation properties that are valid for any projection~$\mathbb{P}_{\tau}$ in the space~$P_{\ell}(\Tt) \otimes Z$ that is stable in~$L^{\infty}(0, T; Z)$. In particular, due to Lemmas \ref{lemma:stab-ItR} and~\ref{lemma:stab-Pt}, it holds true for~$\ItR$ and~$\Pt$.

\begin{lemma}[Estimates for a generic projector~$\mathbb{P}_{\tau}$] 
\label{lemma:estimates-Ptau}
Given~$p \in [1, \infty]$, a Banach space~$(Z, \Norm{\cdot}{Z})$ with~$Z \subseteq L^1(\Omega)$, and a projection operator~$\mathbb{P}_{\tau}$ in the space~$P_{\ell}(\Tt) \otimes Z$ that is stable in the~$L^{\infty}(0, T; Z)$ norm, 
the following estimate holds for all~$v \in W^{\ell + 1}_p(0, T; Z)$ and~$n \in \{1, \ldots, N\}$:
\begin{equation}
\label{eq:estimate-ItR}
\Norm{(\Id - \mathbb{P}_{\tau}) v}{L^p(\In; Z)} \lesssim \tau_n^{\ell + 1} \Norm{\dpt^{(\ell + 1)} v}{L^p(\In; Z)}.
\end{equation}
\end{lemma}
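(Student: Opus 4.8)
The plan is to localize the estimate to a single time slab~$\In$ and to exploit two structural features of~$\mathbb{P}_{\tau}$: that it reproduces time-polynomials of degree at most~$\ell$ (being a projection onto~$\Pp{\ell}{\Tt} \otimes Z$), and that, acting slab-by-slab, its restriction to~$\In$ is stable in the~$L^{\infty}(\In; Z)$ norm with a constant~$C$ depending only on~$\ell$, which is exactly what Lemmas~\ref{lemma:stab-ItR} and~\ref{lemma:stab-Pt} provide for~$\ItR$ and~$\Pt$. First I would fix~$n$ and, for an arbitrary~$q \in \Pp{\ell}{\In} \otimes Z$, use the reproduction property~$\mathbb{P}_{\tau} q = q$ to write~$(\Id - \mathbb{P}_{\tau}) v = (\Id - \mathbb{P}_{\tau})(v - q)$. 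A triangle inequality then reduces the task to estimating~$\Norm{v - q}{}$ and~$\Norm{\mathbb{P}_{\tau}(v - q)}{}$, with~$q$ still at our disposal.

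The key technical point is to bridge the~$L^{\infty}$-stability hypothesis with the desired~$L^p$ bound. On the bounded interval~$\In$ one has~$\Norm{w}{L^p(\In; Z)} \le \tau_n^{1/p} \Norm{w}{L^{\infty}(\In; Z)}$, so applying this to both terms and then using the stability of~$\mathbb{P}_{\tau}$ gives
\begin{equation*}
\Norm{(\Id - \mathbb{P}_{\tau}) v}{L^p(\In; Z)} \le \tau_n^{1/p}\big(1 + C\big)\Norm{v - q}{L^{\infty}(\In; Z)}.
\end{equation*}
The apparent loss of the factor~$\tau_n^{1/p}$ is harmless: it will be recovered exactly from the approximation step, where passing from an~$L^p$ right-hand side to the~$L^{\infty}$ quantity above costs a compensating~$\tau_n^{-1/p}$ via Hölder's inequality.

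Taking the infimum over~$q$, it remains to prove the Bramble--Hilbert-type bound~$\inf_{q}\Norm{v - q}{L^{\infty}(\In; Z)} \lesssim \tau_n^{\ell + 1 - 1/p}\Norm{\dpt^{(\ell + 1)} v}{L^p(\In; Z)}$. For this I would take~$q$ to be the order-$\ell$ Taylor polynomial of~$v$ centered at~$\tnmo$ (well defined since~$W_p^{\ell + 1}(\In; Z) \hookrightarrow C^{\ell}(\overline{\In}; Z)$ in one dimension), and invoke the integral form of the remainder: for~$t \in \In$,
\begin{equation*}
\Norm{(v - q)(\cdot, t)}{Z} \le \frac{1}{\ell!}\int_{\tnmo}^{t} (t - s)^{\ell} \Norm{\dpt^{(\ell + 1)} v(\cdot, s)}{Z} \ds \le \frac{\tau_n^{\ell}}{\ell!}\Norm{\dpt^{(\ell + 1)} v}{L^1(\In; Z)}.
\end{equation*}
A Hölder inequality on~$\In$ yields~$\Norm{\dpt^{(\ell + 1)} v}{L^1(\In; Z)} \le \tau_n^{1 - 1/p}\Norm{\dpt^{(\ell + 1)} v}{L^p(\In; Z)}$, which supplies precisely the~$\tau_n^{-1/p}$ needed to cancel the factor from the stability step, producing the claimed power~$\tau_n^{\ell + 1}$ with a hidden constant of the form~$(1 + C)/\ell!$ depending only on~$\ell$.

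The only genuine obstacle is the mismatch between the~$L^{\infty}$-stability that is assumed and the~$L^p$-estimate that is sought; the resolution, as sketched, is the exact cancellation of the~$\tau_n^{\pm 1/p}$ powers arising from the crude~$L^p$--$L^{\infty}$ inclusion and from Hölder's inequality. A minor point to record is that the argument uses only the slab-wise restriction of~$\mathbb{P}_{\tau}$; since both~$\ItR$ and~$\Pt$ act slab-by-slab with per-slab stability constants depending only on~$\ell$, the hidden constant in~\eqref{eq:estimate-ItR} is indeed independent of~$n$, $\tau$, and~$h$.
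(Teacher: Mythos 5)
Your proof is correct, and it shares the paper's overall skeleton: insert a slab polynomial, use the projection (reproduction) property and the triangle inequality, convert to $L^\infty(\In;Z)$ at the cost of $\tau_n^{1/p}$, apply the stability hypothesis, and recover that factor through H\"older applied to $\dpt^{(\ell+1)}v$. The substantive difference lies in the polynomial-approximation ingredient. The paper disposes of it by citing \cite[Lemma~A.1]{Diening_Storn_Tscherpel:2023}, once in $L^p(\In;Z)$ for the term $\Norm{v - v_\tau}{L^p(\In;Z)}$ and once in $L^\infty(\In;Z)$ for the projected term; you instead construct the competitor explicitly as the degree-$\ell$ Taylor polynomial of $v$ at $\tnmo$ and bound the error by the integral form of the remainder. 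This buys a fully self-contained argument with an explicit constant of the form $(1+C)/\ell!$, valid for Banach-valued $v$ at the minimal regularity $W_1^{\ell+1}(\In;Z)$ needed for the remainder formula; the paper's route is shorter but outsources precisely this point. Your other deviation --- converting \emph{both} terms to $L^\infty$ rather than only the projected one, as the paper does --- is harmless, since the Taylor bound combined with H\"older returns exactly the power $\tau_n^{\ell+1}$. Finally, your closing caveat that the hypothesis must be read as slab-wise stability is the correct interpretation: that is what Lemmas~\ref{lemma:stab-ItR} and~\ref{lemma:stab-Pt} actually provide for $\ItR$ and~$\Pt$, and it is needed (in your proof and in the paper's alike) for the estimate to localize to~$\In$.
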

\begin{proof}
Let~$p \in [1, \infty]$, and~$v \in W_p^{\ell + 1}(\In; Z)$. Using the triangle and the H\"older inequalities, and the stability 
of~$\mathbb{P}_{\tau}$, for any~$v_{\tau} \in P_{\ell}(\In) \otimes Z$, we get
\begin{alignat}{3}
\nonumber
\Norm{(\Id - \mathbb{P}_{\tau}) v}{L^p(\In; Z)} 
& \le \Norm{v - v_{\tau}}{L^p(\In; Z)} + \Norm{\mathbb{P}_{\tau} (v - v_{\tau})}{L^p(\In; Z)} \\
\nonumber
& \lesssim \Norm{v - v_{\tau}}{L^p(\In; Z)} + \tau_n^{1/p} \Norm{v - v_{\tau}}{L^{\infty}(\In; Z)},
\end{alignat}
which, together with the approximation properties of~$P_{\ell}(\In) \otimes Z$ from~\cite[Lemma~A.1]{Diening_Storn_Tscherpel:2023} and the H\"older inequality, yields
\begin{alignat*}{3}
\Norm{(\Id - \mathbb{P}_{\tau}) v}{L^p(\In; Z)} 
& \lesssim \tau_n^{\ell + 1} \Norm{\dpt^{(\ell + 1)} v}{L^p(\In; Z)} + \tau_n^{1/p + \ell} \Norm{\dpt^{(\ell + 1)} v}{L^1(\In; Z)} \\
& \lesssim \tau_n^{\ell + 1} \Norm{\dpt^{(\ell + 1)} v}{L^p(\In; Z)}.
\end{alignat*}
This completes the proof of~\eqref{eq:estimate-ItR}.
\end{proof}

We shall denote by~$\IRT$ the standard RT interpolant operator; see, e.g., \cite[\S6.1.1]{Wang_Ye:2007} and~\cite[\S2.5.1 in Ch.~2]{Boffi_Brezzi_Fortin:2013}. 

\begin{definition}[Interpolant~$\IRT$]
The interpolant~$\IRT : H^1(\Omega)^d \to \RTk(\Th)$ is defined on each element~$K \in \Th$ as follows:
\begin{subequations}
\begin{alignat}{3}
((\Id - \IRT) \v \cdot \bn_{K}, p_k)_{\partial K} & = 0  & &\qquad \forall p_k \in \mathcal{R}_k(\partial K), \\
\label{def:IRT-2}
((\Id - \IRT) \v, \p_{k-1} )_{K} & = 0 & & \qquad \forall \p_{k-1} \in \Pp{k}{K}^d,
\end{alignat}
\end{subequations}
where~$\mathcal{R}_k(\partial K) := \{\phi \in L^2(K) \, :\, \phi_{|_f} \in \Pp{k}{f}$ for all facets~$f$ of~$K$\}.
\end{definition}

Next lemma concerns the standard local approximation properties of~$\IRT$.
\begin{lemma}[Estimates for~$\IRT$, {see~\cite[Prop.~2.5.1]{Boffi_Brezzi_Fortin:2013}}]
\label{lemma:estimates-IRT}
For all~$\v \in H^{s+1}(K)$ with~$0 \le s \le k$, it holds
\begin{alignat*}{3}
\Norm{(\Id - \IRT) \v}{L^2(K)} & \lesssim h_K^{s + 1} \SemiNorm{\v}{H^{s + 1}(K)^d}, \\
\Norm{\nabla (\Id - \IRT) \v}{L^2(K)^d} & \lesssim h_K^s \SemiNorm{\v}{H^{s + 1}(K)^d}.
\end{alignat*}
\end{lemma}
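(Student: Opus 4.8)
The plan is to transfer the estimates to the reference simplex $\hat K$ via the Piola transform and then invoke a Bramble--Hilbert argument there. Let $F_K(\hat\bx) = B_K \hat\bx + b_K$ be the affine map from $\hat K$ onto $K$, and let $\mathcal{P}_K \hat\v := (\det B_K)^{-1} B_K\,(\hat\v \circ F_K^{-1})$ be the associated Piola transform. The key structural fact is that $\IRT$ commutes with $\mathcal{P}_K$, namely $\IRT(\mathcal{P}_K \hat\v) = \mathcal{P}_K(\widehat{\mathcal{I}}^{\mathsf{RT}}\hat\v)$, where $\widehat{\mathcal{I}}^{\mathsf{RT}}$ is the RT interpolant on $\hat K$. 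This holds precisely because the degrees of freedom defining $\IRT$ (the normal moments on $\partial K$ and the interior vector moments) are preserved by the Piola transform. It therefore suffices to prove both bounds on $\hat K$, with constants depending only on $\hat K$ and $k$, and then rescale.

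On the reference element the estimate is a standard Deny--Lions/Bramble--Hilbert result. First I would record that $\RTk(\hat K) \supseteq [\mathbb{P}_k(\hat K)]^d$, so that $\widehat{\mathcal{I}}^{\mathsf{RT}}$ reproduces every $\hat{\boldsymbol p} \in [\mathbb{P}_k(\hat K)]^d$, i.e. $(\Id - \widehat{\mathcal{I}}^{\mathsf{RT}})\hat{\boldsymbol p} = 0$. Second, I would check that $\widehat{\mathcal{I}}^{\mathsf{RT}} : H^{s+1}(\hat K)^d \to L^2(\hat K)^d$ is bounded for every $0 \le s \le k$: the interior moments are continuous on $L^2(\hat K)^d$, while the facet normal moments are continuous on $H^{s+1}(\hat K)^d$ because the trace theorem gives $H^{s+1}(\hat K) \hookrightarrow H^1(\hat K) \hookrightarrow L^2(\partial \hat K)$ for $s \ge 0$. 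Combining reproduction with boundedness, for any $\hat{\boldsymbol p} \in [\mathbb{P}_s(\hat K)]^d \subseteq [\mathbb{P}_k(\hat K)]^d$,
\[
\|(\Id - \widehat{\mathcal{I}}^{\mathsf{RT}})\hat\v\|_{L^2(\hat K)} = \|(\Id - \widehat{\mathcal{I}}^{\mathsf{RT}})(\hat\v - \hat{\boldsymbol p})\|_{L^2(\hat K)} \lesssim \|\hat\v - \hat{\boldsymbol p}\|_{H^{s+1}(\hat K)},
\]
and minimizing over $\hat{\boldsymbol p}$ via the Deny--Lions lemma gives the reference bound $\lesssim |\hat\v|_{H^{s+1}(\hat K)}$. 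The $H^1$-seminorm estimate on $\hat K$ follows identically, using that $\nabla(\Id - \widehat{\mathcal{I}}^{\mathsf{RT}})$ also annihilates $[\mathbb{P}_k(\hat K)]^d$ and that $\widehat{\mathcal{I}}^{\mathsf{RT}}$ is bounded into $H^1(\hat K)^d$.

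Finally I would scale back to $K$. Shape-regularity yields $\|B_K\| \lesssim \hK$, $\|B_K^{-1}\| \lesssim \hK^{-1}$ and $|\det B_K| \approx \hK^d$, while the Piola norm-transformation rule reads $|\mathcal{P}_K\hat\w|_{H^m(K)} \lesssim |\det B_K|^{-1/2}\|B_K\|\,\|B_K^{-1}\|^m\,|\hat\w|_{H^m(\hat K)}$, and the inverse Piola together with the affine chain rule gives $|\hat\v|_{H^{s+1}(\hat K)} \lesssim |\det B_K|^{1/2}\|B_K^{-1}\|\,\|B_K\|^{s+1}\,|\v|_{H^{s+1}(K)^d}$. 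Writing $\v = \mathcal{P}_K\hat\v$, using the commutation property and inserting these factors, the $L^2$ error accumulates $\|B_K\|^{s+2}\|B_K^{-1}\| \lesssim \hK^{s+1}$, and the gradient error accumulates $\|B_K\|^{s+2}\|B_K^{-1}\|^2 \lesssim \hK^{s}$, which are exactly the two claimed bounds. The main obstacle is precisely this bookkeeping: unlike the ordinary affine pullback, the Piola transform couples the components of $\hat\v$ through $B_K$, so one must track the factors coming from $B_K$, $B_K^{-1}$ and $\det B_K$ simultaneously in the interpolant and in the Sobolev seminorms, and verify that they combine into the stated powers of $\hK$ uniformly over the shape-regular family.
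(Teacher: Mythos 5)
Your proposal is correct: the paper does not prove this lemma at all but simply cites it as a known result from \cite[Prop.~2.5.1]{Boffi_Brezzi_Fortin:2013}, and your argument is precisely the standard proof given there --- Piola-transform commutation with the interpolant, boundedness of the reference-element interpolant via $H^1(\hat K)\hookrightarrow L^2(\partial\hat K)$ traces, polynomial reproduction plus Deny--Lions, and shape-regular scaling with $\|B_K\|\lesssim h_K$, $\|B_K^{-1}\|\lesssim h_K^{-1}$. The power bookkeeping ($\|B_K\|^{s+2}\|B_K^{-1}\|$ for the $L^2$ bound, $\|B_K\|^{s+2}\|B_K^{-1}\|^2$ for the gradient bound) is exactly right, so nothing further is needed.
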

In what follows, whenever needed in the context, the interpolant~$\IRT$ is to be understood as applied pointwise in time almost everywhere.
\begin{remark}
For any ${\bf u} \in W_p^1(0,T;\Z)$, $p \in [1,\infty]$, it holds \cite[Cor.~2.3.1]{Boffi_Brezzi_Fortin:2013}
$$
\IRT {\bf u}(\cdot,t) \in \BDMk(\Th) \subseteq \RTk(\Th) \qquad \forall t \in [0,T] \, ,
$$
so that $\IRT {\bf u} \in \Zht$ for both choices in \eqref{space-choice}.
\eremk
\end{remark}

\subsection{A priori error estimates for the discrete error}

In the present section we derive Reynolds semi-robust and pressure-robust error estimates for the proposed scheme.
As above, the symbol $\u$ will denote the solution to the continuous weak formulation~\eqref{eq:weak-formulation} while~$\uht \in \Zht$ will represent the solution to the space--time formulation~\eqref{eq:kernel-space-time-problem}. Finally, we will make use of the composed projection~$\Piht \u \in \Zht$ given by~$\Pt \IRT \u$, which satisfies the following approximation result.

\begin{lemma}[Estimates for~$\Piht$]
\label{lemma:estimate-Piht}
For any~$n \in \{1,2,\ldots,N\}$, $K \in \Th$, and~$s \in\{0,1\}$, it holds
\begin{equation}\label{approx-estimate-st}
\Norm{(\Id - \Piht) \v}{L^\infty(\In; H^s(K)^d)} 
\lesssim \hK^{k+1-s} \Norm{\v}{L^\infty(\In; H^{k+1}(K)^d)} 
+ \tau_n^{\ell + 1} \Norm{\v}{W^{\ell+1}_{\infty}(\In; H^s(K)^d)}
\end{equation}
for all~$\v \in L^{\infty}(\In; H^{k + 1}(K)^d) \cap W^{\ell + 1}_{\infty}(\In; H^s(K)^d)$.
\end{lemma}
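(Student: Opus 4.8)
The plan is to estimate $(\Id - \Piht)\v$ by a triangle inequality that separates the spatial and temporal contributions, exploiting that the space interpolant $\IRT$ and the time projection $\Pt$ act on different variables and are each defined pointwise in the complementary variable, so that they commute: $\Piht = \Pt\IRT = \IRT\Pt$. The decomposition I would use is
\begin{equation*}
\Id - \Piht = (\Id - \Pt) + (\Id - \IRT)\Pt,
\end{equation*}
which is preferable to the naive split $(\Id - \IRT) + (\Id - \Pt)\IRT$: here $\IRT$ is applied to $\Pt\v$, which retains the full spatial regularity of $\v$, whereas in the naive split $\IRT$ would act on the temporal-projection error of $\v$, forcing a spatial regularity in time that is not available under the stated hypotheses.

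For the first (temporal) term I would invoke the approximation estimate for a generic $L^\infty$-stable projector, Lemma~\ref{lemma:estimates-Ptau}, with $p = \infty$ and $Z = H^s(K)^d$, together with the $L^\infty$-stability of $\Pt$ from Lemma~\ref{lemma:stab-Pt}, to get
\begin{equation*}
\Norm{(\Id - \Pt)\v}{L^\infty(\In; H^s(K)^d)} \lesssim \tau_n^{\ell+1}\Norm{\dpt^{(\ell+1)}\v}{L^\infty(\In; H^s(K)^d)} \le \tau_n^{\ell+1}\Norm{\v}{W^{\ell+1}_{\infty}(\In; H^s(K)^d)},
\end{equation*}
which is exactly the temporal contribution in~\eqref{approx-estimate-st} and uses only $\v \in W^{\ell+1}_{\infty}(\In; H^s(K)^d)$. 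For the second (spatial) term I would apply Lemma~\ref{lemma:estimates-IRT} pointwise in time to $(\Pt\v)(\cdot,t) \in H^{k+1}(K)^d$ with regularity index $k$ (which simultaneously covers $s=0$ and $s=1$ after bounding the seminorm by the full norm), followed by the stability of $\Pt$ with $Z = H^{k+1}(K)^d$:
\begin{equation*}
\Norm{(\Id - \IRT)\Pt\v}{L^\infty(\In; H^s(K)^d)} \lesssim \hK^{k+1-s}\Norm{\Pt\v}{L^\infty(\In; H^{k+1}(K)^d)} \lesssim \hK^{k+1-s}\Norm{\v}{L^\infty(\In; H^{k+1}(K)^d)}.
\end{equation*}
Summing the two contributions yields~\eqref{approx-estimate-st}.

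The two displayed chains are routine given the three quoted lemmas, so I do not expect a genuine obstacle; the only points needing a word of justification are the commutativity $\Pt\IRT = \IRT\Pt$ (immediate, since $\IRT$ acts in space and $\Pt$ in time) and the fact that Lemma~\ref{lemma:stab-Pt}, although stated for a global Banach space $Z \subseteq L^1(\Omega)$, applies verbatim with the local choices $Z = H^s(K)^d$ and $Z = H^{k+1}(K)^d$: its proof uses only that $\Norm{\cdot}{Z}$ is a convex norm and that $\Pt$ operates pointwise, so the constant continues to depend only on $\ell$. The real content of the argument is thus the choice of decomposition itself, which routes $\IRT$ through $\Pt\v$ rather than through the time-projection error and is precisely what lets the two decoupled regularity hypotheses $L^\infty(\In; H^{k+1}(K)^d)$ and $W^{\ell+1}_{\infty}(\In; H^s(K)^d)$ suffice.
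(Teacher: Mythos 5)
Your proof is correct and takes essentially the same route as the paper's: the paper also uses the triangle-inequality split $\Id - \Piht = (\Id - \Pt) + \Pt(\Id - \IRT)$ (the same operator as your $(\Id - \IRT)\Pt$, by the commutativity you note) and invokes exactly the same three ingredients, namely the stability of $\Pt$ (Lemma~\ref{lemma:stab-Pt}), the generic projector estimate (Lemma~\ref{lemma:estimates-Ptau}), and the $\IRT$ estimates (Lemma~\ref{lemma:estimates-IRT}). The only cosmetic difference is the order of operations on the spatial term: the paper first uses $\Pt$-stability in $H^s(K)^d$ and then applies the $\IRT$ estimate to $\v$, whereas you apply the $\IRT$ estimate pointwise in time to $\Pt\v$ and then use $\Pt$-stability in $H^{k+1}(K)^d$.
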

\begin{proof}
By a triangle inequality and the stability in Lemma~\ref{lemma:stab-Pt} of~$\Pt$
$$
\begin{aligned}
\| \u - \Piht \u \|_{L^\infty(I_n;H^s(K)^d)} 
& \lesssim
\| \u - \Pt \u \|_{L^\infty(I_n;H^s(K)^d)}
+ \| \Pt(\u - \IRT \u) \|_{L^\infty(I_n;H^s(K)^d)} \\
& \lesssim \| \u - \Pt \u \|_{L^\infty(I_n;H^s(K)^d)}
+ \| \u - \IRT \u \|_{L^\infty(I_n;H^s(K)^d)} \, .
\end{aligned}
$$
The result then follows by the approximation properties in Lemmas~\ref{lemma:estimates-Ptau} and~\ref{lemma:estimates-IRT} of~$\Pt$ and~$\IRT$, respectively.
\end{proof}

\subsubsection{Discrete error equations}
We introduce the following error functions:
\begin{equation*}
\eu := \u - \uht = \epi + \Piht \eu, \ \text{ with } \epi := \u - \Piht \u \ \text{ and } \ \Piht \eu := \Piht \u - \uht.
\end{equation*}

Using the definition of the form~$\Bht$ in~\eqref{eq:kernel-linearized-problem}, and the definition of the space--time formulation~\eqref{eq:kernel-space-time-problem}, we deduce that the following error equation is satisfied for all~$\zht \in \Zht$:
\begin{alignat}{3}
\nonumber
\Bht(\uht; \Piht \eu, \zht) & = \Bht(\uht; \Piht \u, \zht) - (\f, \zht)_{\QT} - (\u_0, \zht(\cdot, 0))_{\Omega} \\
\nonumber
& = -\mht(\epi, \zht) - \nu \aht(\epi, \zht) \\
\label{eq:discrete-error-equation}
&\quad + \cht(\uht; \Piht \u, \zht) - c^T(\u; \u, \zht),
\end{alignat}
where we have conveniently used the notation
$$c^T(\u; \u, \zht) := \int_0^T c(\u; \u, \zht) \dt.$$

The first term on the right-hand side of~\eqref{eq:discrete-error-equation} can be simplified using the properties of the projection operator~$\Pt$. Next lemma is a key ingredient to avoid any restriction of the form~$\tau \lesssim h$ in the error analysis.
\begin{lemma}
\label{lemma:identity-mht-error}
The following identity holds for all~$\zht \in \Zht$:
\begin{equation*}
\mht(\epi, \zht) 
= \mht( (\Id - \IRT) \u, \zht) = \big( (\Id - \IRT) \dpt \u, \zht\big)_{\QT} + ((\Id - \IRT) \eu, \zht)_{\SO}.
\end{equation*}
\end{lemma}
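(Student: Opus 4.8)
The plan is to prove the two asserted equalities separately. For the first, I would write $\epi = \u - \Piht\u = \u - \Pt\IRT\u$ and observe that $\epi - (\Id - \IRT)\u = \IRT\u - \Pt\IRT\u = (\Id - \Pt)\IRT\u$, so the claim $\mht(\epi,\zht) = \mht((\Id-\IRT)\u,\zht)$ is equivalent to showing that $\mht((\Id-\Pt)\IRT\u, \zht) = 0$ for every $\zht \in \Zht$. Setting $\phi := (\Id - \Pt)\IRT\u$, the two defining properties of $\Pt$ become the crucial tools: since $\IRT\u$ is continuous in time, \eqref{eq:Pt-1} gives $\phi(\cdot, \tn^-) = 0$ for $n = 1, \ldots, N$, while \eqref{eq:Pt-2} gives $(\phi, q_{\ell-1})_{\In} = 0$ for all $q_{\ell-1} \in \Pp{\ell-1}{\In}$.

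For the core computation I would integrate by parts in time on each slab in the first term of \eqref{def:mht}, obtaining $(\dpt\phi, \zht)_{\Qn} = (\phi(\cdot,\tn^-), \zht(\cdot,\tn^-))_{\Omega} - (\phi(\cdot,\tnmo^+), \zht(\cdot,\tnmo^+))_{\Omega} - \int_{\In}(\phi, \dpt\zht)_{\Omega}\dt$. Here the last integral vanishes because $\dpt\zht{}_{|_{\In}} \in \Pp{\ell-1}{\In}$ (as $\zht$ has degree $\ell$ in time), so \eqref{eq:Pt-2} applies; and the boundary term at $\tn^-$ vanishes since $\phi(\cdot,\tn^-)=0$. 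Thus each slab contributes only $-(\phi(\cdot,\tnmo^+), \zht(\cdot,\tnmo^+))_{\Omega}$. Summing over $n$, and writing the jump terms using $\jump{\phi}_n = \phi(\cdot,\tn^-) - \phi(\cdot,\tn^+) = -\phi(\cdot,\tn^+)$, the three groups of contributions in \eqref{def:mht} (time-derivative terms, jump terms, and the initial term on $\SO$) telescope exactly to zero. Keeping careful track of the one-sided limits $\tn^{\pm}$ in this telescoping is the main bookkeeping obstacle, but it is purely mechanical once the two vanishing properties above are in place.

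For the second equality I would set $\psi := (\Id - \IRT)\u$ and use that $\IRT$ acts only on the spatial variables, hence commutes with $\dpt$, giving $\dpt\psi = (\Id - \IRT)\dpt\u$; moreover $\psi$ is continuous in time, so all the jumps $\jump{\psi}_n$ vanish and only the first and last terms of \eqref{def:mht} survive, yielding $\mht((\Id-\IRT)\u, \zht) = ((\Id-\IRT)\dpt\u, \zht)_{\QT} + ((\Id-\IRT)\u, \zht)_{\SO}$. The final step is to replace $\u$ by $\eu$ in the surface term: since $\IRT$ is a projection reproducing $\RTk(\Th)$ and $\uht(\cdot,t) \in \Vh \subseteq \RTk(\Th)$ for both choices in \eqref{space-choice} (using $\BDMk(\Th) \subseteq \RTk(\Th)$), we have $\IRT\uht = \uht$, whence $(\Id-\IRT)\eu = (\Id-\IRT)\u$ and in particular $((\Id-\IRT)\u, \zht)_{\SO} = ((\Id-\IRT)\eu, \zht)_{\SO}$. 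Combining the two equalities completes the proof.
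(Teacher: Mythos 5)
Your proof is correct and rests on exactly the same ingredients as the paper's: integration by parts in time, the two defining properties \eqref{eq:Pt-1}--\eqref{eq:Pt-2} of~$\Pt$, the vanishing of the time jumps of~$\IRT\u$, and the reproduction property~$\IRT \uht = \uht$ (via $\BDMk(\Th) \subseteq \RTk(\Th)$) for passing from $\u$ to $\eu$ in the $\SO$-term. The only difference is organizational: you split off $(\Id - \Pt)\IRT\u$ by bilinearity and show that $\mht$ annihilates it by a telescoping argument, whereas the paper integrates by parts on $\epi$ directly, removes $\Pt$ in the resulting expression, and integrates back — the two computations are equivalent.
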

\begin{proof}
We use the definition in~\eqref{def:discrete-space-time-forms} of~$\mht(\cdot, \cdot)$, integration by parts in time, and the flux-jump identity
\begin{equation*}
\jump{v}_n w(\cdot, \tn^+) + \jump{w}_n v(\cdot, \tn^-) = \jump{vw}_n \quad \text{ for }n = 1, \ldots, N,
\end{equation*}
to obtain
\begin{alignat*}{3}
\mht(\epi, \zht) & = \sum_{n = 1}^N (\dpt \epi, \zht)_{\Qn} - \sum_{n = 1}^{N - 1} \big(\jump{\epi}_n, \zht(\cdot, \tn^+) \big)_{\Omega} + (\epi, \zht)_{\SO} \\
& = - \sum_{n = 1}^N \big((\Id - \Pt \IRT) \u, \dpt \zht \big)_{\Qn} + \big((\Id - \Pt \IRT) \u, \zht \big)_{\ST} \\
& \quad + \sum_{n = 1}^{N - 1} \int_{\Omega} \Big(\jump{\zht \cdot (\Id - \Pt \IRT) \u}_n - \jump{(\Id - \Pt \IRT) \u}_n \cdot \zht(\cdot, \tn^+) \Big) \dx \\
& = - \sum_{n = 1}^N \big((\Id - \Pt \IRT) \u, \dpt \zht \big)_{\Qn} + \big((\Id - \Pt \IRT) \u, \zht\big)_{\ST} \\
& \quad + \sum_{n = 1}^{N - 1} \big((\Id - \Pt \IRT) \u(\cdot, \tn^-), \jump{\zht}_n \big)_{\Omega}.
\end{alignat*}
Consequently, due to Definition~\ref{def:Pt}, we can remove the projection~$\Pt$ in the last expression. Integrating by parts back in time and noting that all time jumps of $\IRT\u$ vanish, we obtain the desired identity.
\end{proof}

Therefore, using the identity in Lemma~\ref{lemma:identity-mht-error}, the error equation~\eqref{eq:discrete-error-equation} reduces to
\begin{alignat}{3}
\nonumber
\Bht(\uht; \Piht \eu, \zht) & =
- \big( (\Id - \IRT) \dpt \u, \zht\big)_{\QT} - ((\Id - \IRT) \eu, \zht)_{\SO} - \nu \aht(\epi, \zht) \\
\label{eq:discrete-error-equation-simplified}
&\quad + \cht(\uht; \Piht \u, \zht) - c^T(\u; \u, \zht).
\end{alignat}

\subsubsection{Bounds for the consistency terms}
We here develop bounds for the consistency terms in the error equation~\eqref{eq:discrete-error-equation-simplified}.
In the following, in order to lighten the notation in the proofs, and unless otherwise needed, we will collect into generic constants~${\cal R}_{\u}$ certain regularity terms of the solution~$\u$, all with summability~$p=+\infty$ in time. Such a summability order in time could be relaxed in many instances below without any loss in the final convergence order but at the price of a more cumbersome presentation.

\begin{lemma}[Bound for the time derivative term]\label{lemma:cons-time}
Let~$\u$ denote the velocity solution to the continuous weak formulation~\eqref{eq:weak-formulation} and~$\uht \in \Zht$ be the solution to the space--time formulation~\eqref{eq:kernel-space-time-problem}.
Let also~$\u \in W^1_\infty(0,T; H^{k+1}(\Omega)^d)$. Then, for any function~$\vht \in \Zht$ satisfying
\begin{equation}
\label{eq:scaling-test}
\Norm{\vht}{L^{\infty}(\In; L^2(\Omega)^d)} \lesssim \Norm{\Piht \eu}{L^{\infty}(\In; L^2(\Omega)^d)},
\end{equation}
for~$n = 1, \ldots, N$, the following estimate holds for any positive real $\varepsilon$:
\begin{alignat*}{3}
\big( (\Id - \IRT) \dpt \u, \vht\big)_{\Qn} 
& \lesssim \varepsilon^{-1} {\cal R}_{\u} \tau_n h^{2k + 2} +  
\varepsilon \tau_n \Norm{\Piht \eu}{L^{\infty}(\In; L^2(\Omega)^d)}^2.
\end{alignat*}
where~${\cal R}_{\u} = \Norm{\dpt \u}{L^\infty(\In; H^{k + 1}(\Omega)^d)}^{2}$, and the hidden constant only depends on the degrees of approximation $k$ and~$\ell$, and the shape-regularity parameter~$\rho$.
\end{lemma}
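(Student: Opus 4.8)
The plan is to bound the space--time pairing by a direct Cauchy--Schwarz argument in space, followed by the elementwise approximation properties of the Raviart--Thomas interpolant, and then to exploit the scaling hypothesis~\eqref{eq:scaling-test} together with Young's inequality. Notably, the elementwise orthogonality relation~\eqref{def:IRT-2} of $\IRT$ is \emph{not} needed here: the optimal $L^2$ interpolation rate $h^{k+1}$ for $\dpt\u$ already suffices, and squaring it via Young's inequality is exactly what produces the target factor $h^{2k+2}$. First I would write the integral over $\Qn$ as a time integral of an elementwise sum and apply the Cauchy--Schwarz inequality on each $K \in \Th$, so that
\begin{equation*}
\big( (\Id - \IRT) \dpt \u, \vht\big)_{\Qn} \le \int_{\In} \sum_{K \in \Th} \Norm{(\Id - \IRT) \dpt \u(\cdot, t)}{L^2(K)^d}\,\Norm{\vht(\cdot, t)}{L^2(K)^d} \dt.
\end{equation*}
Applying Lemma~\ref{lemma:estimates-IRT} with $s = k$ gives $\Norm{(\Id - \IRT) \dpt \u(\cdot, t)}{L^2(K)^d} \lesssim \hK^{k+1} \SemiNorm{\dpt \u(\cdot, t)}{H^{k+1}(K)^d}$, and a discrete Cauchy--Schwarz over the elements together with the bound $\hK \le h$ then yields
\begin{equation*}
\big( (\Id - \IRT) \dpt \u, \vht\big)_{\Qn} \lesssim h^{k+1} \int_{\In} \SemiNorm{\dpt \u(\cdot, t)}{H^{k+1}(\Omega)^d}\,\Norm{\vht(\cdot, t)}{L^2(\Omega)^d} \dt,
\end{equation*}
where I have used that the broken seminorm coincides with the global one since $\u$ is globally smooth.

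Next I would dispose of the time integration. Bounding $\Norm{\vht(\cdot, t)}{L^2(\Omega)^d}$ by $\Norm{\vht}{L^{\infty}(\In; L^2(\Omega)^d)}$ pulls it out of the integral and leaves $\Norm{\dpt \u}{L^1(\In; H^{k+1}(\Omega)^d)} \le \tau_n \Norm{\dpt \u}{L^{\infty}(\In; H^{k+1}(\Omega)^d)}$. The crucial step is then to invoke the scaling hypothesis~\eqref{eq:scaling-test}, which replaces $\Norm{\vht}{L^{\infty}(\In; L^2(\Omega)^d)}$ by $\Norm{\Piht \eu}{L^{\infty}(\In; L^2(\Omega)^d)}$ up to a constant. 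This is precisely what permits the final estimate to be phrased in the $L^{\infty}$-in-time norm of $\Piht \eu$ rather than an $L^2$-in-time quantity, so at this stage
\begin{equation*}
\big( (\Id - \IRT) \dpt \u, \vht\big)_{\Qn} \lesssim h^{k+1} \tau_n \Norm{\dpt \u}{L^{\infty}(\In; H^{k+1}(\Omega)^d)}\, \Norm{\Piht \eu}{L^{\infty}(\In; L^2(\Omega)^d)}.
\end{equation*}

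Finally I would apply Young's inequality $ab \le \tfrac12(\varepsilon^{-1} a^2 + \varepsilon b^2)$ with $a = h^{k+1}\Norm{\dpt \u}{L^{\infty}(\In; H^{k+1}(\Omega)^d)}$ and $b = \Norm{\Piht \eu}{L^{\infty}(\In; L^2(\Omega)^d)}$, keeping the common factor $\tau_n$ outside, which delivers exactly the claimed bound with $\Ru = \Norm{\dpt \u}{L^{\infty}(\In; H^{k+1}(\Omega)^d)}^2$. The argument is entirely elementary and presents no real obstacle; the only point demanding care is the bookkeeping of the time integration so that one lands on the $L^{\infty}(\In; L^2(\Omega)^d)$ norm of $\Piht \eu$ (made possible solely through~\eqref{eq:scaling-test}), since testing directly with $\Piht \eu$ would instead produce an $L^2$-in-time norm and thereby break the $L^{\infty}$-in-time energy estimate the analysis is designed to achieve.
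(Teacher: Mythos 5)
Your proof is correct and follows essentially the same route as the paper: the paper's own (very brief) proof likewise invokes the approximation properties of $\IRT$ from Lemma~\ref{lemma:estimates-IRT}, the scaling hypothesis~\eqref{eq:scaling-test}, and the H\"older and Young inequalities, which is exactly the argument you spell out. Your observation that the orthogonality property~\eqref{def:IRT-2} is not needed here—since the plain $L^2$ rate $h^{k+1}$, squared by Young's inequality, already yields $h^{2k+2}$—is also consistent with the paper's proof.
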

\begin{proof}
The result is a simple consequence of the approximation properties in Lemma~\ref{lemma:estimates-IRT} of~$\IRT$ and~\eqref{eq:scaling-test},  combined with the H\"older and the Young inequalities.
\end{proof}

\begin{lemma}[Bound for the diffusion term]\label{lemma:cons-diff}
Let~$\u$ denote the velocity solution to the continuous weak formulation~\eqref{eq:weak-formulation} and~$\uht\in \Zht$ be the solution to space--time formulation~\eqref{eq:kernel-space-time-problem}. Let also~$\u \in W^{\ell + 1}_\infty(0, T; H^2(\Omega)^d) \cap L^{\infty}(0, T; H^{k + 1}(\Omega)^d)$. Then, for any function~$\vht \in \Zht$ satisfying
\begin{equation}
\label{eq:scaling-vht-aht}
\int_{\In} \Norm{\vht(\cdot, t)}{\calA, h}^2 \dt \lesssim \int_{\In} \Norm{\Piht \eu (\cdot, t)}{\calA, h}^2 \dt,
\end{equation}
for~$n = 1, \ldots, N$, the following estimate holds for any positive real $\varepsilon$:
\begin{equation*}
\aht^{(n)}((\Id - \Piht) \u, \vht) \lesssim  \varepsilon^{-1} {\cal R}_{\u} \Big( {\tau_n^{2\ell + 3}} + \tau_n h^{2k} \Big)
+ \varepsilon \int_{\In} \Norm{\Piht \eu}{\calA, h}^2 \dt.
\end{equation*}
where the hidden constant only depends on $k,\ell$, and~$\rho$. The positive real~${\cal R}_{\u}$ depends on~$\u$ evaluated in the norms of the regularity assumption in the statement of this lemma.
\end{lemma}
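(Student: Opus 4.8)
The plan is to exploit the continuity of the diffusion form in the $\calA_*$-norm together with the fact that, on each time slab, the test function $\vht$ can be controlled by $\Piht\eu$ through the scaling assumption~\eqref{eq:scaling-vht-aht}. Restricting the global continuity estimate~\eqref{eq:continuity-aht} to the single interval $\In$ yields
$$\aht^{(n)}((\Id-\Piht)\u,\vht) \le C_{\calA_*}\Big(\int_{\In}\Norm{\epi}{\calA_*,h}^2\dt\Big)^{1/2}\Big(\int_{\In}\Norm{\vht}{\calA,h}^2\dt\Big)^{1/2}.$$
Bounding the second factor via~\eqref{eq:scaling-vht-aht} and applying the Young inequality with parameter $\varepsilon$, the estimate reduces to proving
$$\int_{\In}\Norm{\epi}{\calA_*,h}^2\dt \lesssim {\cal R}_{\u}\big(\tau_n^{2\ell+3} + \tau_n h^{2k}\big),$$
which is the core of the argument.

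To this end, I would split the projection error as $\epi = (\Id-\Pt)\u + \Pt(\Id-\IRT)\u$ and treat each piece separately. For the time-projection part $(\Id-\Pt)\u$, I use that $\u$ is continuous across spatial facets, so all the jump contributions in $\Norm{\cdot}{\calA_*,h}$ vanish, and a standard trace inequality controls the boundary normal-derivative term in~\eqref{def:A*-norm} by $|(\Id-\Pt)\u|_{H^1(\Omega)}^2 + h^2|(\Id-\Pt)\u|_{H^2(\Omega)}^2$. Since $\Pt$ acts only in time, and hence commutes with spatial derivatives, Lemma~\ref{lemma:estimates-Ptau} (with $p=\infty$ and $Z=H^1(\Omega)^d$, $H^2(\Omega)^d$) gives $\Norm{(\Id-\Pt)\u}{L^\infty(\In;H^s)}\lesssim\tau_n^{\ell+1}\Norm{\dpt^{(\ell+1)}\u}{L^\infty(\In;H^s)}$; bounding $\Norm{\cdot}{L^2(\In)}\le\tau_n^{1/2}\Norm{\cdot}{L^\infty(\In)}$ then produces the $\tau_n^{2\ell+3}$ contribution, the $h^2\,H^2$ term being subdominant since $h\lesssim 1$.

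For the spatial-projection part $\Pt(\Id-\IRT)\u$, I would first invoke the $L^\infty$-in-time stability of $\Pt$ from Lemma~\ref{lemma:stab-Pt} (applied with $Z$ the space carrying the $\Norm{\cdot}{\calA_*,h}$-norm) together with $\Norm{\cdot}{L^2(\In)}\le\tau_n^{1/2}\Norm{\cdot}{L^\infty(\In)}$, reducing matters to the fixed-time bound $\Norm{(\Id-\IRT)\u}{\calA_*,h}\lesssim h^k|\u|_{H^{k+1}(\Omega)}$. The latter follows from the approximation properties of $\IRT$ in Lemma~\ref{lemma:estimates-IRT}: the broken-gradient term is of order $h^k$ directly, while the tangential jump term and the boundary normal-derivative term are brought to the same order by discrete trace inequalities (recalling that $\IRT\u$ is normal-continuous, so only tangential jumps survive). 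This yields the $\tau_n h^{2k}$ contribution, and combining the two parts closes the estimate.

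The main obstacle I anticipate is the careful treatment of the $\calA_*$-specific boundary term $\sum_K \hK\Norm{\Nabla(\cdot)_{|_K}\bn_K}{L^2(\partial K)}^2$ for the interpolation error, since Lemma~\ref{lemma:estimates-IRT} is stated only up to first derivatives; controlling the residual $h_K^2|(\Id-\IRT)\u|_{H^2(K)}^2$ requires the standard second-order Bramble--Hilbert estimate for $\IRT$. A second delicate point is the bookkeeping of the powers of $\tau_n$: the extra half power in $\tau_n^{2\ell+3}$ arises precisely from passing from an $L^\infty(\In)$ bound to the $L^2(\In)$ integral appearing in $\Norm{\cdot}{\calA_*}$, and one must check that this does not spoil the $h^{2k}$ scaling of the spatial part.
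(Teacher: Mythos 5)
Your proposal is correct and follows essentially the same route as the paper: continuity of $\aht^{(n)}$ in the $\calA_*$/$\calA$ norms plus the scaling assumption and Young's inequality, then the splitting $(\Id-\Piht)\u = (\Id-\Pt)\u + \Pt(\Id-\IRT)\u$ handled with the $L^\infty$-in-time stability of $\Pt$, the approximation estimates of Lemmas~\ref{lemma:estimates-Ptau} and~\ref{lemma:estimates-IRT}, and scaled trace inequalities (the paper merely organizes the computation by the three terms $\Theta_1,\Theta_2,\Theta_3$ of the $\calA_*$-norm rather than by the projection split). The ``second-order'' estimate $|(\Id-\IRT)\u|_{H^2(K)}\lesssim h_K^{k-1}|\u|_{H^{k+1}(K)}$ that you flag as the main obstacle is indeed also needed (implicitly) in the paper's treatment of the Hessian term in $\Theta_3$, and it is a standard consequence of the polynomial-preservation/Bramble--Hilbert argument for the RT/BDM interpolant, so your proof closes.
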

\begin{proof}
Using the continuity in~\eqref{eq:continuity-aht} of the bilinear form~$\aht^{(n)}(\cdot, \cdot)$ and~\eqref{eq:scaling-vht-aht}, we get
\begin{alignat}{3}
\nonumber
\aht^{(n)}((\Id - \Piht) \u, \vht) 
& \le C_{\calA_*} \Big(\int_{\In} \Norm{(\Id - \Piht) \u(\cdot, t)}{\calA_*, h}^2 \dt \Big)^{\frac12} \Big(\int_{\In} \Norm{\vht}{\calA, h}^2 \dt \Big)^{\frac12} \\
\label{eq:aux-estimate-aht}
& \lesssim C_{\calA_*} \Big(\int_{\In} \Norm{(\Id - \Piht) \u(\cdot, t)}{\calA_*, h}^2 \dt \Big)^{\frac12} \Big(\int_{\In} \Norm{\Piht \eu}{\calA, h}^2 \dt \Big)^{\frac12}.
\end{alignat}
We now estimate the term involving~$(\Id - \Piht) \u$.
Due to the definition in~\eqref{def:A*-norm} of the norm~$\Norm{\cdot}{\calA_*, h}^2$, we have
\begin{alignat}{3}
\nonumber
\int_{\In} \Norm{(\Id - \Piht) \u(\cdot, t)}{\calA_*, h}^2 \dt & = \Norm{\Nablah (\Id - \Piht) \u}{L^2(\In; L^2(\Omega)^{d \times d})}^2 \\
\nonumber
& \quad + \sum_{F \in \Fh} \int_F \sigma h_F^{-1} \Norm{\jump{(\Id - \Piht) \u}}{L^2(\In; L^2(F)^d)}^2 \\
\nonumber
& \quad + \sum_{K \in \Th} \hK \Norm{\Nabla (\Id - \Piht) \u {}_{|_K} \bn_K}{L^2(\In; L^2(\partial K)^d)}^2 \\
\nonumber
& =: \Theta_1 + \Theta_2 + \Theta_3.
\end{alignat}

The triangle inequality, the stability property in Lemma~\ref{lemma:stab-Pt} of~$\Pt$, the approximation properties in Lemmas~\ref{lemma:estimates-IRT} and~\ref{lemma:estimate-Piht} of~$\IRT$ and~$\Pt$, respectively, {and the H\"older inequality} yield
\begin{alignat}{3}
\nonumber
\Theta_1 & = \Norm{\Nablah (\Id - \Piht) \u}{L^2(\In; L^2(\Omega)^{d\times d})}^2 \\
\nonumber
& \lesssim  \Norm{(\Id - \Pt) \Nabla \u}{L^2(\In; L^2(\Omega)^{d\times d})}^2 + \tau_n \Norm{\Pt \Nablah (\Id - \IRT) \u}{L^{\infty}(\In; L^2(\Omega)^{d \times d})}^2 \\
\nonumber
& \lesssim \tau_n^{2\ell + 2} \Norm{\Nabla \dpt^{(\ell + 1)} \u}{L^2(\In; L^2(\Omega)^{d \times d})}^2 + \tau_n h^{2k} \Norm{\u}{L^{\infty}(\In; H^{k + 1}(\Omega)^d)}^2\\
\nonumber
& {\lesssim \tau_n^{2\ell + 3} \Norm{\Nabla \dpt^{(\ell + 1)} \u}{L^{\infty}(\In; L^2(\Omega)^{d\times d})}^2 + \tau_n h^{2k} \Norm{\u}{L^{\infty}(\In; H^{k + 1}(\Omega)^d)}^2.} 
\end{alignat}

As for the term~$\Theta_2$, we use the continuity in space of~$\u$ and~$\Pt \u$, the stability property in Lemma~\ref{lemma:stab-Pt} of~$\Pt$, standard trace inequalities in space, and the approximation properties in Lemma~\ref{lemma:estimates-IRT} of~$\IRT$ to obtain
\begin{alignat*}{3}
\nonumber
\Theta_2 & = \sum_{F \in \Fh} \int_F \sigma h_F^{-1} \Norm{\jump{(\Id - \Piht) \u}}{L^2(\In; L^2(F)^d)}^2  \\
\nonumber
& \lesssim  \sum_{F \in \Fh} \int_F \sigma h_F^{-1} \tau_n \Norm{\Pt \jump{(\Id - \IRT) \u}}{L^{\infty}(\In; L^2(F)^d)}^2  \\
\nonumber
& \lesssim \tau_n \sum_{K \in \Th} \sigma h_F^{-1} \Big(\hK^{-1} \Norm{(\Id - \IRT) \u}{L^{\infty}(\In; L^2(K)^d)}^2 + \hK \Norm{\Nabla (\Id - \IRT) \u}{L^{\infty}(\In; L^2(K)^{d \times d})}^2 \Big)  \\
& \lesssim \tau_n h^{2k} \Norm{\u}{L^{\infty}(\In; H^{k + 1}(\Omega)^d)}^2.
\end{alignat*}

Similar steps can be used to estimate the term~$\Theta_3$ as follows:
\begin{alignat}{3}
\nonumber
\Theta_3 & = \sum_{K \in \Th} \hK \Norm{\Nabla (\Id - \Piht) \u_{|_{K}} \bn_K}{L^2(\In; L^2(\partial K)^d)}^2 \\
\nonumber
& = \sum_{K \in \Th} \Big(\Norm{(\Id - \Pt) \Nabla \u}{L^2(\In; L^2(K)^{d \times d})}^2 + \tau_n \Norm{\Pt \Nabla (\Id - \IRT) \u}{L^{\infty}(\In; L^2(K)^{d \times d})}^2 \\
\nonumber
& \quad \quad + \hK^2 \Norm{(\Id - \Pt) \mathbf{H} \u}{L^2(\In; L^2(K)^{d \times d \times d})}^2 + \hK^2 \tau_n \Norm{\Pt \mathbf{H} (\Id - \IRT) \u}{L^{\infty}(\In; L^2(K)^{d \times d \times d})}^2 \Big) \\
\nonumber
& \lesssim \tau_n^{2 \ell + 2} \Norm{\Nabla \dpt^{(\ell + 1)} \u}{L^2(\In; L^2(\Omega)^{d \times d})}^2 + \tau_n h^{2k} \Norm{\u}{L^{\infty}(\In; H^{k + 1}(\Omega)^{d})}^2 \\
\nonumber
& \quad + h_K^2 \tau_n^{2 \ell + 2} \Norm{\dpt^{(\ell + 1)} \u}{L^2(\In; H^2(\Omega)^{d})}^2 \\
\nonumber
& {\lesssim \tau_n^{2 \ell + 3} \Norm{\Nabla \dpt^{(\ell + 1)} \u}{L^{\infty}(\In; L^2(\Omega)^{d \times d})}^2 + \tau_n h^{2k} \Norm{\u}{L^{\infty}(\In; H^{k + 1}(\Omega)^{d})}^2 }\\
\nonumber
& {\quad + h_K^2 \tau_n^{2 \ell + 3} \Norm{\dpt^{(\ell + 1)} \u}{L^{\infty}(\In; H^2(\Omega)^{d})}^2,
}
\end{alignat}
where we have denoted by~$\mathbf{H}$ the Hessian operator. The desired estimate then follows by combining the above estimates for~$\{\Theta_i\}_{i = 1}^3$ with~\eqref{eq:aux-estimate-aht}, noting that~$h_K \lesssim 1$ and applying a Young inequality.
\end{proof}

\begin{lemma}[Bound for the convection term]\label{lemma:cons-conv}
Let $\u$ denote the velocity solution to the continuous weak formulation~\eqref{eq:weak-formulation} and let~$\uht \in \Zht$ be the discrete solution to the 
space--time formulation~\eqref{eq:kernel-space-time-problem}. Let the following regularity assumptions hold: $\u \in L^\infty(0,T;W^{k+1}_4(\Omega)^d)$, 
$\u \in W^{\ell+1}_{\infty}(0,T;L^2(\Omega)^d)$,  $(\Nabla \u) \u \in W^{\ell+1}_{\infty}(0,T;L^2(\Omega)^d)$. Let furthermore $\vht$ be any function in $\Zht$ satisfying (for all $n \in \{1,2,\ldots,N\}$)
\begin{equation}\label{eq:vh:equiv}
\esssup_{t \in I_n} | \vht(\cdot,t) |_S \lesssim  \esssup_{t \in I_n} | \Piht \eu (\cdot,t) |_S \, ,
\end{equation}
where $| \cdot |_S$ denotes any seminorm on $\Vh$ and the hidden constant depends only on $\ell$. Then, for any positive real $\varepsilon$ and $n \in \{1,2,\ldots,N\}$, it holds
$$
\begin{aligned}
& {\big|} \cht^{(n)}(\uht; \Piht \u, \vht) - \int_{I_n} \! c(\u; \u, \vht)\dt {\big|}
\le C \, {\cal R}_{\u} \tau_n \varepsilon^{-1}  \Big( h^{2k+1} {+ \tau_n^{2\ell+2}}  \Big) \\
& \ + \varepsilon C \Big( \tau_n \,  \| \Piht \eu \|_{L^\infty(I_n;L^2(\Omega)^d)}^2 +  
\sum_{F \in \FhI} \QtRn\Big( \gamma_F(\uht) \, \|\jump{\vht}\|_{L^2(F)^d}^2 \Big)
+ \tau_n C^\star_{\u} \| \Piht \eu \|_{L^\infty(I_n;L^2(\Omega)^d)}^2  \, .
\end{aligned}
$$
where $C^\star_{\u} = \widetilde{C} \| \u \|_{L^\infty(0, T;W^{1}_{\infty}(\Omega)^d)}$ and both constants $C,\widetilde{C}$ only depend on $k,\ell$, and~$\rho$. The term ${\cal R}_{\u}$ depends on~$\u$ evaluated in the norms of the regularity assumption in the statement of this lemma. 
\end{lemma}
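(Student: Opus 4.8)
The plan is to reduce both objects to a single spatial upwind trilinear form and then separate the transported-variable error, the wind error, and the time-quadrature error. First I would use property~\eqref{eq:property-interpolant-time-1} (exactly as in the derivation of~\eqref{eq:identity-cht}) to write the discrete form as a Gauss--Radau quadrature of a spatial form, $\cht^{(n)}(\uht;\Piht\u,\vht)=\QtRn\big(o_h(\uht;\Piht\u,\vht)\big)$, where
$$o_h(\w;\z,\v):=((\Nabla\z)\w,\v)_{\Omega}-\sum_{F\in\FhI}\int_F(\w\cdot\bnF)\jump{\z}\cdot\mvl{\v}\dS+\tfrac12\sum_{F\in\FhI}\int_F\gamma_F(\w)\jump{\z}\cdot\jump{\v}\dS.$$
Since the exact velocity~$\u$ is continuous across facets, $\jump{\u}=0$, so $\int_{\In}c(\u;\u,\vht)\dt=\int_{\In}o_h(\u;\u,\vht)\dt$. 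I would then split the difference as
$$\cht^{(n)}(\uht;\Piht\u,\vht)-\int_{\In}c(\u;\u,\vht)\dt=\underbrace{-\QtRn\big(o_h(\uht;\epi,\vht)\big)}_{A}+\underbrace{\QtRn\big(((\Nabla\u)(\uht-\u),\vht)_{\Omega}\big)}_{B}+\underbrace{\Big(\QtRn-\int_{\In}\Big)\big(((\Nabla\u)\u,\vht)_{\Omega}\big)}_{C},$$
using $\jump{\u}=0$ again to collapse the wind-error and quadrature pieces to volume contributions.

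The terms $B$ and $C$ are the routine ones. For $C$ I would exploit that the left-sided Gauss--Radau rule is exact on~$\Pp{2\ell}{\In}$: subtracting the degree-$\ell$ time projection of $(\Nabla\u)\u$ (whose pairing with the degree-$\ell$ test $\vht$ lies in $\Pp{2\ell}{\In}$ and is therefore integrated exactly) leaves only the projection remainder, which Lemma~\ref{lemma:estimates-Ptau} bounds by $\tau_n^{\ell+1}$; together with $\sum_i\omega_i^{(n)}=\tau_n$ and the scaling~\eqref{eq:scaling-test} this yields, after a Young inequality, $\varepsilon^{-1}\Ru\,\tau_n^{2\ell+3}+\varepsilon\tau_n\Norm{\Piht\eu}{L^\infty(\In;L^2(\Omega)^d)}^2$. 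For $B$ I would write $\uht-\u=-\epi-\Piht\eu$: the $\epi$ part is bounded by $\Norm{\u}{W^1_\infty}\Norm{\epi}{L^\infty(\In;L^2)}\Norm{\vht}{L^\infty(\In;L^2)}$, so Lemma~\ref{lemma:estimate-Piht} contributes only the harmless $\tau_n h^{2k+2}+\tau_n^{2\ell+3}$ consistency (absorbed in $h^{2k+1}$); the $\Piht\eu$ part, bounded pointwise by $\Norm{\Nabla\u}{L^\infty}$ and the quadrature weights, produces exactly the Gr\"onwall contribution $\tau_n C^\star_{\u}\Norm{\Piht\eu}{L^\infty(\In;L^2)}^2$ with $C^\star_{\u}=\widetilde C\Norm{\u}{L^\infty(0,T;W^1_\infty(\Omega)^d)}$.

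The crux is term $A$, i.e. the consistency of the upwind form against the interpolation error $\epi=\u-\Piht\u$; this is where the half-order-reduced rate $h^{2k+1}$ must appear and where Reynolds robustness forbids any inverse power of~$\nu$ (equivalently, any uncontrolled norm of the wind such as $\Norm{\uht}{L^\infty}$). At a fixed time I would integrate the volume term $((\Nabla\epi)\uht,\vht)_{\Omega}$ by parts, using $\div\uht=0$ and the single-valued normal trace of $\uht$ to merge the arising element-boundary integrals with the DG flux term; the surviving facet contributions are then $\int_F(\uht\cdot\bnF)\mvl{\epi}\cdot\jump{\vht}$ and $\tfrac12\int_F\gamma_F(\uht)\jump{\epi}\cdot\jump{\vht}$. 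Bounding $|\uht\cdot\bnF|\le\gamma_F(\uht)$ from~\eqref{def:gamma}, a Cauchy--Schwarz in the $\gamma_F(\uht)$-weighted facet inner product together with a trace inequality and Lemma~\ref{lemma:estimate-Piht} (giving $\Norm{\jump{\epi}}{L^2(F)}^2\lesssim h^{2k+1}\Ru$) delivers the dominant $h^{2k+1}$ and leaves a factor $\gamma_F(\uht)\Norm{\jump{\vht}}{L^2(F)}^2$, absorbed into $\sum_{F}\QtRn\big(\gamma_F(\uht)\Norm{\jump{\vht}}{L^2(F)^d}^2\big)$ via~\eqref{eq:vh:equiv}. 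The remaining volume integral now pairs $\epi$ (not $\Nabla\epi$) against $(\Nabla\vht)\uht$: here I would invoke the $L^2$-orthogonality~\eqref{def:IRT-2} of $(\Id-\IRT)\u$ to $\Pp{k-1}{K}^d$, reduce $(\Nabla\vht)\uht$ modulo $\Pp{k-1}{K}^d$ to $(\Nabla\vht)(\uht-\overline{\uht})$ by a mean-value argument, and control $h_K\Norm{\Nabla\vht}{L^2(K)}$ by an inverse estimate; splitting the wind as $\uht=\u-\eu$ keeps the smooth part weighted by $\Norm{\u}{W^1_\infty}$ (or $\Norm{\u}{W^{k+1}_4}$ through an $L^4$--$L^4$--$L^2$ H\"older bound) and relegates the quadratic $\eu$-contribution to higher order through the a priori bound of Theorem~\ref{thm:existence-discrete-solutions}.

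Finally I would apply $\QtRn$ (positive weights summing to $\tau_n$) to lift these pointwise-in-time spatial bounds to the interval $\In$, and close with Young's inequality to separate the $\varepsilon^{-1}\Ru(\tau_n h^{2k+1}+\tau_n^{2\ell+3})$ consistency from the absorbable terms. The main obstacle is squarely term $A$: the volume part must be upgraded from the naive $h^{2k}$ to at least $h^{2k+1}$ using the RT orthogonality and integration by parts, while simultaneously routing every occurrence of the discrete wind either into the upwind seminorm of $\vht$ or into $\Norm{\u}{W^1_\infty}$/$\Norm{\u}{W^{k+1}_4}$, so that no inverse power of~$\nu$ or uncontrolled norm of~$\uht$ enters and the precise constants $C$, $C^\star_{\u}$, and $\Ru$ claimed in the statement are matched.
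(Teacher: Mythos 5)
Your three-way splitting
$$
\cht^{(n)}(\uht;\Piht\u,\vht)-\int_{\In}c(\u;\u,\vht)\dt
= -\QtRn\big(o_h(\uht;\epi,\vht)\big)
+ \QtRn\big(((\Nabla\u)(\uht-\u),\vht)_{\Omega}\big)
+ \Big(\QtRn-\int_{\In}\Big)\big(((\Nabla\u)\u,\vht)_{\Omega}\big)
$$
is algebraically correct, and your terms $B$ and $C$ are handled essentially as the paper handles its $J_1^{(n)}$, $J_{3,1}^{(n)}$ and $J_4^{(n)}$. The decisive difference is where the \emph{discrete} wind lands. The paper's decomposition pairs the exact wind $\u$ with the projection error ($J_2^{(n)}=\QtRn(\chtbh(\u;\epi,\vht))$) and pairs the wind error $\u-\uht$ with the projected solution $\Piht\u$ ($J_3^{(n)},J_4^{(n)}$), so that no term ever contains a product of two error quantities; your term $A$ instead carries $\uht$ against $\epi$, and this is where your argument breaks. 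After integration by parts, the volume part of $A$ is $((\Nabla\vht)\uht,\widehat{\epi})_\Omega$ with $\widehat{\epi}:=\Pt(\Id-\IRT)\u$, and your split $\uht=\u-\eu$ leaves the quadratic-in-error term $((\Nabla\vht)\eu,\widehat{\epi})_\Omega$. Your justification --- that Theorem~\ref{thm:existence-discrete-solutions} makes this ``higher order'' --- does not hold: that theorem (like Proposition~\ref{prop:continuous-dependence}) gives only \emph{boundedness}, $\Norm{\eu}{L^\infty(0,T;L^2(\Omega)^d)}\lesssim 1$, not smallness, and invoking the convergence estimate itself would be circular since Theorem~\ref{prop:final} rests on the present lemma. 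With boundedness, an inverse estimate on $\Nabla\vht$, and $\Norm{\widehat{\epi}}{L^\infty(\In;L^\infty(\Omega)^d)}\lesssim h^{k+1}$, the best this yields is of order $h^{k}\Norm{\vht}{L^2(\Omega)^d}$, hence $\varepsilon^{-1}\tau_n h^{2k}$ after Young --- a full power of $h$ short of the claimed $h^{2k+1}$, which is exactly the gain the lemma must deliver. (A repair exists: split once more $\eu=\epi+\Piht\eu$ and treat the $\Piht\eu$ part not as ``higher order'' but as an additional Gr\"onwall contribution via~\eqref{eq:vh:equiv}; but that is a different mechanism from the one you propose, and it is precisely what the paper's decomposition achieves from the outset.)

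The facet and penalty parts of $A$ contain a second gap. Your weighted Cauchy--Schwarz leaves the weight on the $\epi$ side, i.e.\ quantities like $\sum_{F\in\FhI}\int_F\gamma_F(\uht)\,|\jump{\epi}|^2\dS$, and you then invoke ``a trace inequality and Lemma~\ref{lemma:estimate-Piht}'' as if $\gamma_F(\uht)=\max\{c_S,\Norm{\uht\cdot\bnF}{L^\infty(F)}\}$ were a bounded constant. It is not: no uniform $L^\infty$ control of $\uht$ is available, only $\Norm{\uht}{L^\infty(\In;L^2(\Omega)^d)}\lesssim 1$. This is exactly the paper's term $J_5^{(n)}$, and removing the weight is the delicate step of the whole proof: one bounds the jump locally by $h_F^{2k+1}\Norm{\u}{L^\infty(\In;H^{k+1}(\omega_F)^d)}^2\lesssim h_F^{2k+1}|\omega_F|^{1/2}\Norm{\u}{L^\infty(\In;W^{k+1}_4(\omega_F)^d)}^2$, trades $\Norm{\uht}{L^\infty(\In;L^\infty(\omega_F)^d)}$ for $|\omega_F|^{-1/2}\Norm{\uht}{L^\infty(\In;L^2(\omega_F)^d)}$ by an inverse inequality (the two $|\omega_F|$ factors cancelling), sums over facets by a discrete H\"older inequality, and finally uses the stability bound of Proposition~\ref{prop:continuous-dependence}. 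This chain is also the only reason the hypothesis $\u\in L^\infty(0,T;W^{k+1}_4(\Omega)^d)$ --- which you list but never actually use for this purpose --- appears in the statement. Without supplying this (or an equivalent) mechanism for both the flux term $\int_F(\uht\cdot\bnF)\,\mvl{\epi}\cdot\jump{\vht}\dS$ and the penalty term, the asserted $h^{2k+1}$ rate for term $A$ is unjustified.
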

\begin{proof}
We preliminarily note that, from the definition of $\gamma$ in~\eqref{def:gamma}, for any $\wht \in \Vh$, it holds
\begin{equation}\label{eq:equiv:jmp}
\QtRn\Big( \| \jump{\vht} \|_{L^2(\FhI)^d}^2 \Big)
\lesssim \sum_{F \in \FhI} \QtRn\Big( \gamma_F(\wht) \, \|\jump{\vht}\|_{L^2(F)^d}^2 \Big).
\end{equation}

We now split the (spatial) discrete convective form into three parts. For all $\wht, \uht, \vht$ in $\Vh$
\begin{equation}
\label{eq:def-convec-forms}
\begin{split}
& \chtb(\wht; \uht, \vht) :=  \big((\Nabla_h \uht) \wht, \vht \big)_{\Omega} \, , \\
& \chth(\wht; \uht, \vht) := \big((\wht \cdot \bnF) \jump{\uht}, \mvl{\vht} \big)_{\FhI} \, , \\
& \chtt(\wht; \uht, \vht) :=
\frac12 \big(\gamma(\wht) \jump{\uht}, \jump{\vht} \big)_{\FhI} \, ,
\end{split}
\end{equation}
which we can be trivially extended to all sufficiently regular functions. We will denote by 
$\chtbh(\cdot;\cdot,\cdot)$ the sum of the first two forms above; note that $\chtbh(\cdot;\cdot,\cdot)$ is antisymmetric with respect to the last two entries, whenever the first one is solenoidal.

Let 
$\vht \in \Zht$. First recalling Remark \ref{rem:integr}, then by some simple algebra and noting that the spatial jumps of~$\u$ vanish, one can get
\begin{equation}\label{eq:conv:start}
\begin{aligned}
& \big| \cht^{(n)}(\uht; \Piht \u, \vht) - \int_{I_n} \! c(\u; \u, \vht) \dt \big|
\le \Big| \int_{I_n} c(\u; \u, \vht) \dt - \QtRn \big( c(\u; \u, \vht) \big) \Big| \\
& \quad +  \big| \QtRn \big( \chtbh (\u; (\Id - \Piht) \u , \vht) \big) \big|
+ \big| \QtRn \big( \chtbh (\Piht \eu; \Piht \u, \vht) \big) \big| \\
& \quad + \big| \QtRn \big( \chtbh ((\Id - \Piht) \u; \Piht \u, \vht) \big) \big|
+ \big| \QtRn \big( \chtt(\uht; \Piht \u, \vht) \big) \big| \\
& \quad =: \sum_{i=1}^{5} J^{(n)}_i \, .
\end{aligned}
\end{equation}
We now bound all the terms separately for any fixed~$n \in \{1,2,\ldots,N\}$. In the following derivations, the positive real~$\varepsilon$ will denote the parameter associated with the Young inequality. 

For the first term, using \eqref{eq:property-interpolant-time-1}, the H\"older inequality, and the approximation properties in Lemma~\ref{lemma:estimates-Ptau} of~$\ItR$ yield

\begin{alignat*}{3}
J_1^{(n)} & = \Big| 
\big((\Id - \ItR) \big((\Nabla \u) \u \big), \vht \big)_{\Qn} \Big|
\le \Norm{(\Id - \ItR)((\nabla \u) \u)}{L^1(\In; L^2(\Omega)^d)} \Norm{\vht}{L^{\infty}(\In; L^2(\Omega)^d)} \\
& \lesssim \tau_n^{\ell+3/2}  \Norm{\dpt^{(\ell+1)} ((\nabla \u) \u)}{L^{2}(\In; L^2(\Omega)^d)} \Norm{\vht}{L^{\infty}(\In; L^2(\Omega)^d)} \\
& {\lesssim \tau_n^{\ell + 2}  \Norm{\dpt^{(\ell+1)} ((\nabla \u) \u)}{L^{\infty}(\In; L^2(\Omega)^d)} \Norm{\vht}{L^{\infty}(\In; L^2(\Omega)^d)} }\\
& \lesssim \varepsilon^{-1} {\cal R}_{\u} {\tau_n^{2\ell+3}} 
+ \varepsilon \tau_n \Norm{\vht}{L^{\infty}(\In; L^2(\Omega)^d)}^2 \, .
\end{alignat*}
Adding and subtracting suitable terms, using the antisymmetry of the form~$\chtbh(\cdot; \cdot, \cdot)$, and the fact 
that~$\chth(\u; (\Id - \Pt) \u, \vht) = 0$ since also $\Pt\u$ is continuous in space, we get
\begin{alignat*}{3}
\nonumber
J_2^{(n)} & = |\QtRn \big(\chtbh(\u; \epi, \vht) \big) | \\
\nonumber
& \le |\QtRn \big(\chtbh(\u; (\Id - \Pt) \u, \vht) \big)| + |\QtRn \big( \chtbh(\u; \Pt (\Id - \IRT) \u, \vht) \big)| \\
\nonumber
& = |\QtRn \big(\chtb(\u; (\Id - \Pt) \u, \vht) \big)| + |\QtRn \big( \chtbh(\u; \vht, \Pt (\Id - \IRT) \u) \big)| \\
\nonumber
& \le |\QtRn \big(  [(\Id - \Pt)\Nabla \u] \u, \vht)_{\Omega} \big)|  + |\QtRn \big( ( (\nabla \vht) \u, \Pt (\Id - \IRT) \u)_{\Omega} \big)| \\
\nonumber
& \quad + |\QtRn\big( ((\u \cdot \bnF ) \jump{\vht}, \mvl{\Pt (\Id - \IRT) \u})_{\Omega} \big) | \\
& =: J_{2, 1}^{(n)} + J_{2, 2}^{(n)} + J_{2, 3}^{(n)}.
\end{alignat*}

The term~$J_{2, 1}^{(n)}$ can be easily estimated using the definition of the quadrature rule, the H\"older inequality, and the approximation properties in Lemma~\ref{lemma:estimates-Ptau} of~$\Pt$, as follows:
\begin{alignat*}{3}
J_{2, 1}^{(n)}
& \lesssim \tau_n \Norm{(\Id - \Pt) \Nabla \u}{L^{\infty}(\In; L^2(\Omega)^{d \times d})}
\Norm{\u}{L^{\infty}(\In; L^{\infty}(\Omega)^d)} \Norm{\vht}{L^{\infty}(\In; L^2(\Omega)^d)} \\
& \lesssim \varepsilon^{-1} \tau_n^{2\ell + 3} \Ru + \varepsilon \tau_n \Norm{\vht}{L^{\infty}(\In; L^2(\Omega)^d)}^2.
\end{alignat*}

Let now $\overline{\u}$ denote the $L^2(\Omega)^d$-orthogonal projection of~$u$, at each time instant, on~$\Pp{0}{\Th}$.
Let furthermore $\widehat{\epi} := \Pt (\Id - \IRT) \u$ and note that, combining Lemma \ref{lemma:estimates-IRT} and
Lemma \ref{lemma:stab-Pt}, one easily obtains
\begin{equation}\label{eq:wepi-bound}
\| \widehat{\epi} \|_{L^\infty(\In; H^m(K)^d)} 
\lesssim h_K^{k+1-m} \| \u \|_{L^\infty(\In; H^{k+1}(K)^d)} \, , 
\quad m \in \{ 0,1 \} \, , K \in \Th \, .
\end{equation}

We first make use of the orthogonality properties of the RT/BDM interpolant in~\eqref{def:IRT-2}, then recall the definition of $\QtRn$ and apply suitable H\"older inequalities in space, finally use standard approximation properties of piecewise constant polynomials in~$\Th$:
$$
\begin{aligned}
J_{2,2}^{(n)} & = \big| \QtRn \big( \big( (\Nabla_h \vht) (\u-\overline{\u}), \widehat{\epi} \big)_{\Omega} \big) \big| \\
& \lesssim \tau_n \sum_{K \in \Th} \Norm{\Nabla_h \vht }{L^\infty(\In; L^2(K)^{d\times d})} 
\Norm{ \u - \overline{\u}}{L^\infty(\In; L^\infty(K)^d)}
\Norm{\widehat{\epi}}{L^\infty(\In; L^2(K)^d)} \\
& \lesssim \tau_n \sum_{K \in \Th} \Norm{\Nabla_h \vht}{L^\infty(\In; L^2(K)^{d\times d})} 
\hK \Norm{\u}{L^\infty(\In; W^{1}_{\infty}(K)^d)}
\Norm{\widehat{\epi}}{L^\infty(\In; L^2(K)^d)} \, .
\end{aligned}
$$
An inverse estimate for polynomials in space, combined with bound \eqref{eq:wepi-bound}, yields
$$
\begin{aligned}
J_{2,2}^{(n)} & \lesssim 
\tau_n \sum_{K \in \Th} \Norm{\vht}{L^\infty(I_n;L^2(K)^d)} 
\Norm{\u}{L^\infty(\In; W^{1}_{\infty}(K)^d)}
\Norm{\widehat{\epi}}{L^\infty(\In; L^2(K)^d)} \\
& \lesssim \tau_n \Norm{\u}{L^\infty(\In; W^{1}_{\infty}(\Omega)^d)}
\sum_{K \in \Th} h_K^{k+1} \Norm{\u}{L^\infty(\In; H^{k+1}(K)^d)} 
\Norm{\vht}{L^\infty(\In; L^2(K)^d)} \\
& \lesssim {\cal R}_{\u} \tau_n h^{k+1} \,  \| \vht \|_{L^\infty(I_n;L^2(\Omega)^d)} \\
& \lesssim \ \varepsilon^{-1}  \tau_n h^{2k+2} 
{\cal R}_{\u}^2
+  \varepsilon \, \tau_n \| \vht \|_{L^\infty(I_n,L^2(\Omega)^d)}^2 \, ,
\end{aligned}
$$
where we have used the Young inequality in the last step.

We now make use of the definition of the Gauss-Radau integration rule, apply scaled trace inequalities combined with \eqref{eq:wepi-bound}, recall that all integration weights are bounded by $\tau_n$ up to a constant depending only on $\ell$. Including all the regularity terms of $\u$ in ${\cal R}_{\u}$ as usual, which may change at each occurrence, we finally obtain
\begin{alignat*}{3}
J_{2,3}^{(n)} & = \big| \QtRn \big( \big((\u \cdot \bnF) \jump{\vht}, \mvl{\widehat{\epi}} \big)_{\FhI} \big) \big| \\
& \leq \| \u \|_{L^\infty(I_n;L^\infty(\Omega)^d)} 
\Big[ \QtRn\big( \|\jump{\vht}\|_{L^2(\FhI)^d}^2 \big) \Big]^{1/2}
\Big[ \QtRn\big( \|\mvl{\widehat{\epi}} \|_{L^2(\FhI)^d}^2 \big) \Big]^{1/2} \\
& \lesssim {\cal R}_{\u} \tau_n^{1/2} 
\Big[ \QtRn\big( \|\jump{\vht}\|_{L^2(\FhI)^d}^2 \big) \Big]^{1/2} \\
& \quad \times \Big[ \tau_n \sum_{K \in \Th} \Big( h_K^{-1/2} \| \widehat{\epi} \|_{L^\infty(I_n;L^2(K)^d)}^2 
+ h_K^{1/2} | \widehat{\epi} |_{L^\infty(I_n;H^1(K)^d)}^2\Big) \Big]^{1/2} \\
& \lesssim {\cal R}_{\u} \tau_n^{1/2} h^{k+1/2}  
\Big[ \QtRn\Big( \|\jump{\vht}\|_{L^2(\FhI)^d}^2 \Big) \Big]^{1/2} \\
& \lesssim \varepsilon^{-1} {\cal R}_{\u} \tau_n \, h^{2k+1}
+ \varepsilon \sum_{F \in \FhI} \QtRn\Big( \gamma_F(\uht) \, \|\jump{\vht}\|_{L^2(F)^d}^2 \Big) \, ,
\end{alignat*}
where the last bound follows from \eqref{eq:equiv:jmp} and the Young inequality.

By the triangle inequality,
\begin{equation*} 
J_3^{(n)} \leq \big| \QtRn \big( \chtb (\Piht \eu; \Piht \u, \vht) \big) \big| + \big| \QtRn \big( \chth (\Piht \eu; \Piht \u, \vht) \big) \big|
=: J_{3,1}^{(n)} + J_{3,2}^{(n)} \, .
\end{equation*}
Again by the definition of the quadrature rule and suitable H\"older inequalities, also recalling the stability properties in Lemma~\ref{lemma:stab-Pt} of~$\Pt$, it follows 
$$
\begin{aligned}
J_{3,1}^{(n)} & = \big| \QtRn \big( (\Nabla_h \Piht \u) \Piht \eu, \vht \big)_{\Omega} \big| \\
& \le \tau_n \| \Nabla_h \Piht \u \|_{L^\infty(I_n;L^\infty(\Omega)^{d \times d})}
\| \Piht \eu \|_{L^\infty(I_n;L^2(\Omega)^d)}
\| \vht \|_{L^\infty(I_n;L^2(\Omega)^d)} \\
& \le \tau_n  \widehat{C} \| \u \|_{L^\infty(I_n;W^{1}_{\infty}(\Omega)^d)}
\| \Piht \eu \|_{L^\infty(I_n;L^2(\Omega)^d)} 
\| \vht \|_{L^\infty(I_n;L^2(\Omega)^d)},
\end{aligned}
$$
where the constant $\widehat{C}$ depends only on $k,\ell, \rho$, and we have used the stability 
property~$\Norm{\Nablah \IRT \w}{L^{\infty}(\Omega)^{d \times d}} \lesssim \Norm{\Nabla \w}{L^{\infty}(\Omega)^{d \times d}}$ 
(see, for instance, \cite[Thm.~16.4]{Ern_Guermond-I:2021}).

Using that $\Pt \u$ has vanishing spatial jumps and applying suitable H\"older inequalities and the definition of the Gauss-Radau quadrature, we get
\begin{equation}\label{eq:set:1}
\begin{aligned}
J_{3,2}^{(n)} & = 
\big| \QtRn \big( (\Piht \eu \cdot \bnF) \jump{\Piht \u}, \mvl{\vht} \big)_{\FhI} \big| \\
& =
\big| \QtRn \big( (\Piht \eu \cdot \bnF) \jump{\Piht \u - \Pt\u}, \mvl{\vht} \big)_{\FhI} \big| \\
& \lesssim \tau_n \sum_{F \in \FhI} \| \Piht \eu \|_{L^\infty(I_n;L^2(F)^d)}
\| \jump{\Pt ({\Id} - \IRT) \u} \|_{L^\infty(I_n;L^\infty(F)^d)}
\| \mvl{\vht} \|_{L^\infty(I_n;L^2(F)^d)} \, .
\end{aligned}
\end{equation}
We now use the boundedness in $L^\infty(I_n; Z)$ of the $\Pt$ operator (see Lemma~\ref{lemma:stab-Pt}), then trivial $L^\infty$ trace inequalities combined with approximation properties of $\IRT$, to conclude (for any $F \in \FhI$)
$$
\| \jump{\Pt ({\Id} - \IRT) \u} \|_{L^\infty(I_n;L^\infty(F)^d)} 
\le \widehat{C} h_F \| \u \|_{L^\infty(I_n;W^{1}_{\infty}(\omega_F)^d)} \, ,
$$
where $\omega_F$ denotes the union of the two triangles sharing the facet~$F$ and, as above, the constant $\widehat{C}$ depends only on $k,\ell,\rho$.
We now apply the latter bound in \eqref{eq:set:1}, make use of scaled trace inequalities and inverse estimates for polynomials, thus obtaining
$$
\begin{aligned}
J_{3,2}^{(n)} & \lesssim \tau_n 
\| \u \|_{L^\infty(I_n;W^{1}_{\infty}(\Omega)^d)}
\sum_{F \in \FhI} \| \Piht \eu \|_{L^\infty(I_n;L^2(\omega_F)^d)}
\| \vht \|_{L^\infty(I_n;L^2(\omega_F)^d)} \\
& \lesssim \tau_n \widehat{C}  \| \u \|_{L^\infty(I_n;W^{1}_{\infty}(\textcolor{magenta}{\Omega})^d)}
\| \Piht \eu \|_{L^\infty(I_n;L^2(\Omega)^d)}
\| \vht \|_{L^\infty(I_n;L^2(\Omega)^d)} \, .
\end{aligned}
$$

Addendum $J_{4}^{(n)}$ is bounded by similar arguments as for $J_{3}^{(n)}$, with the advantage that the $(\u - \Piht \u)$ term in the first entry is easier to handle using \eqref{approx-estimate-st}. 
Without showing the details, we obtain
$$
\begin{aligned}
J_{4}^{(n)} & \lesssim 
\tau_n \| \u \|_{L^\infty(I_n;W^{1}_{\infty}(\Omega)^d)} 
\Big( h^{k+1} \| \u \|_{L^\infty(I_n;H^{k+1}(\Omega)^d)} 
+ \tau_n^{\ell+1} \| \dpt^{(\ell + 1)} \u \|_{L^{\infty}(I_n;L^2(\Omega)^d)} \Big)
\| \vht \|_{L^\infty(I_n;L^2(\Omega)^d)} \\
& \lesssim  \varepsilon^{-1} \, \tau_n \, (h^{2k+2} + \tau_n^{2\ell+2}) {\cal R}_{\u}
+ \varepsilon \, \tau_n \| \vht \|_{L^\infty(I_n;L^2(\Omega)^d)}^2 \, .
\end{aligned}
$$

Since $\Pt \u$ has vanishing spatial jumps, we obtain
\begin{alignat}{3}
\nonumber
J_{5}^{(n)} & = 
\frac12 \big| \QtRn \big(\gamma(\uht) \jump{\Piht \u - \Pt \u}, \jump{\vht} \big)_{\FhI} \big| \\
\nonumber
& \lesssim  
\Big[\QtRn\Big(
\sum_{F \in \FhI} \gamma_F(\uht) \| \jump{\Pt ({\Id} - \IRT) \u} \|_{L^2(F)}^2 
\Big)\Big]^{1/2} 
\Big[\QtRn\Big( 
\sum_{F \in \FhI} \gamma_F(\uht) \, |\jump{\vht}|_{L^2(F)^d}^2 
\Big)\Big]^{1/2} 
\\
\nonumber
& \lesssim \tau_n^{1/2} \Big[ \sum_{F \in \FhI}  
\max{ \{c_S, \| \uht \|_{L^\infty(I_n;L^\infty(F)^d)} \} }
\| \jump{\Pt ({\Id} - \IRT) \u} \|_{L^\infty(I_n;L^2(F)^d)}^2
\Big]^{1/2} \\
\label{eq:new:1}
& \quad \times \Big[ \sum_{F \in \FhI} \QtRn\Big( \gamma_F(\uht) \, \|\jump{\vht}\|_{L^2(F)^d}^2 \Big) \Big]^{1/2} \, .
\end{alignat}
We now write
\begin{equation}\label{eq:Tsplit}
\sum_{F \in \FhI} \max{ \{c_S, \| \uht \|_{L^\infty(I_n;L^\infty(F)^d)} \} }
\| \jump{\Pt ({\Id} - \IRT) \u} \|_{L^\infty(I_n;L^2(F)^d)}^2 
\le J_1' + J_2' \, ,
\end{equation}
where 
\begin{equation}
\label{eq:J1p-J2p}
\begin{aligned}
& J_1' : = c_S \sum_{F \in \FhI} \| \jump{\Pt ({\Id} - \IRT) \u} \|_{L^\infty(I_n;L^2(F)^d)}^2 \, , \\
& J_2' := \sum_{F \in \FhI} \| \uht \|_{L^\infty(I_n;L^\infty(F)^d)} \| \jump{\Pt ({\Id} - \IRT) \u} \|_{L^\infty(I_n;L^2(F)^d)}^2  \, .
\end{aligned}
\end{equation}
The term $J_1'$ can be easily bounded by the stability of $\Pt$ in $L^\infty(I_n; Z)$,
classical trace inequalities and approximation estimates for $\IRT$, leading to
\begin{equation}\label{eq:case-T1}
J_1' \lesssim h^{2k+1} {\cal R}_{\u} \, .
\end{equation}
We need to handle $J_2'$. We recall the boundedness in $L^\infty(I_n; Z)$ of the $\Pt$ operator (see Lemma~\ref{lemma:stab-Pt}), apply scaled trace inequalities and approximation properties of $\IRT$, finally make use of a classical H\"older inequality. We obtain (for any $F \in \FhI$)
$$
\| \jump{\Pt ({\Id} - \IRT) \u} \|_{L^\infty(I_n;L^2(F)^d)}^2 
\lesssim h_F^{2k+1} \| \u \|_{L^\infty(I_n;H^{k+1}(\omega_F)^d)}^2
\lesssim h_F^{2k+1} |\omega_F|^{1/2} 
\| \u \|_{L^\infty(I_n;W^{k+1}_{4}(\omega_F)^d)}^2 \, .
$$
Inserting the above bound into the definition of $J_2'$, by an inverse inequality in space and a discrete H\"older inequality (with $p=p'=2$) it follows
\begin{alignat}{3}
\nonumber
J_2' & \lesssim 
\sum_{F \in \FhI} \| \uht \|_{L^\infty(I_n;L^\infty(\omega_F)^d)}  
h_F^{2k+1} |\omega_F|^{1/2} \| \u \|_{L^\infty(I_n;W^{k+1}_{4}(\omega_F)^d)}^2 \\
\nonumber
& \lesssim \sum_{F \in \FhI} \| \uht \|_{L^\infty(I_n;L^2(\omega_F)^d)}  
h_F^{2k+1} \| \u \|_{L^\infty(I_n;W^{k+1}_{4}(\omega_F)^d)}^2 \\
\label{eq:case-T2}
& \lesssim h^{2k+1} \| \uht \|_{L^\infty(I_n; L^2(\Omega)^d)}
\| \u \|_{L^\infty(I_n;W^{k+1}_{4}(\Omega)^d)}^2
\lesssim h^{2k+1} {\cal R}_{\u} \, ,
\end{alignat}
where, in the last step, we also used that 
$\| \uht \|_{L^\infty(I_n;L^2(\Omega)^d)}$ is bounded uniformly in~$h$ and~$\tau$, see Proposition \ref{prop:continuous-dependence}.
Combining \eqref{eq:Tsplit}, \eqref{eq:case-T1} and \eqref{eq:case-T2} into \eqref{eq:new:1}, a Young inequality immediately yields
$$
J_{5}^{(n)} \lesssim 
\varepsilon^{-1} \tau_n h^{2k+1} {\cal R}_{\u}
+ \varepsilon \, \sum_{F \in \FhI} \QtRn\Big( \gamma_F(\uht) \, \|\jump{\vht}\|_{L^2(F)^d}^2 \Big) \, .
$$
The proof of the lemma now follows easily by combining \eqref{eq:conv:start} with the bounds for the $J_i^{(n)}$ terms, and using \eqref{eq:vh:equiv} to substitute $\vht$ with $\Piht \eu$ inside all the norms and seminorms.
\end{proof}

\subsection{Error estimates} 

We are now in a position to prove the main result in this section. In the following $\tau := \max_{\{n=1,\ldots,N\}}{\tau_n}$ will denote the maximum time mesh size.

\begin{theorem}[Estimate for the discrete error]\label{prop:final}
Let $\u$ denote the velocity solution to the continuous weak formulation~\eqref{eq:weak-formulation} and let~$\uht \in \Zht$ be the discrete solution to the space--time formulation~\eqref{eq:kernel-space-time-problem}. Let $\u\in W^{1}_{\infty}(0,T; H^{k+1}(\Omega)^d) \cap L^{\infty}(0, T; W^{k+1}_{4}(\Omega)^d) \cap W^{\ell + 1}_{\infty}(0, T; H^2(\Omega)^d)$, and~$(\Nabla \u) \u \in W^{\ell + 1}_\infty (0, T; L^2(\Omega)^d)$.
Then, there exists a positive constant~$C_{\ell}$ depending only on~$\ell$ such that, if~${\tau_n}\le C_\ell/(2 C_{\u}^{\star})$ for all~$n \in \{1,2,\ldots,N\}$ with~$C_{\u}^{\star}$ as in Lemma~\ref{lemma:cons-conv}, it holds
\begin{equation}\label{bound:final}    
\begin{aligned}
\| \Piht \eu \|_{L^\infty(0,T;L^2(\Omega)^d)}^2 & + \nu \sum_{n = 1}^N \int_{I_n} \Norm{\Piht \eu(\cdot, t) }{\calA, h}^2 \dt + \sum_{n = 1}^{N-1} \Norm{\jump{\Piht \eu}_{n}}{L^2(\Omega)^d}^2 \\ 
&  + \sum_{n = 1}^{N} \sum_{F \in \FhI} \QtRn\Big( \int_F  \gamma_F(\uht) \, |\jump{\Piht \eu}|^2 \dS \Big) \\
& \lesssim \, \Big(1 + \exp\Big(\frac{C_{\u}^{\star}}{C_{\ell}} T\Big)\Big) \Bigg[\sum_{n=1}^N \Big( {\tau_n^{2 \ell + 3}} + \tau_n (\nu+h) h^{2k} \Big) + h^{2k+2} \Bigg] \\
& \lesssim \, \Big(1 + \exp\Big(\frac{C_{\u}^{\star}}{C_{\ell}} T\Big)\Big)
 \big( {\tau^{2 \ell+2}} + (\nu + h) h^{2k}\big) \, ,
\end{aligned}
\end{equation}
with hidden constant that depends only on $k,\ell$, $\rho$, and on the solution~$\u$ (evaluated in the norms of the regularity assumption above).
\end{theorem}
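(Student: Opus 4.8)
The plan is to read the error equation~\eqref{eq:discrete-error-equation-simplified} as saying that~$\Piht \eu \in \Zht$ solves the \emph{linearized} problem~\eqref{eq:kernel-linearized-problem} with frozen coefficient~$\wht = \uht$ and with the load functional~$(\f,\cdot)_{\QT} + (\uo,\cdot(\cdot,0))_{\Omega}$ replaced by the four consistency terms on the right-hand side of~\eqref{eq:discrete-error-equation-simplified}. I would therefore reuse \emph{verbatim} the stability machinery of Proposition~\ref{prop:continuous-dependence}, now carrying the consistency terms along and controlling them with Lemmas~\ref{lemma:cons-time},~\ref{lemma:cons-diff}, and~\ref{lemma:cons-conv}. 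As there, the two ingredients are: a \emph{weak partial bound}, obtained by testing~\eqref{eq:discrete-error-equation-simplified} with~$\zht = \Piht \eu$ over the slabs~$1,\dots,n$, whose left-hand side is coercive and produces~$\SemiNorm{\Piht\eu}{\sf J}^2$, $\nu\cA\Norm{\Piht\eu}{\calA}^2$, and~$\SemiNorm{\Piht\eu}{\gamma,\uht}^2$ by~\eqref{eq:identity-mht},~\eqref{eq:coercivity-aht}, and~\eqref{eq:identity-cht}; and a \emph{weighted bound}, obtained on each slab~$n$ by testing with~$\zht^{\star,(n)} := \ItR(\varphi_n \Piht\eu)$ exactly as in~\eqref{eq:aux-identity-continuous-dependence}, which through~\eqref{eq:mht-ItR} and Lemma~\ref{lemma:bilinear-form-varphi} delivers on the left the endpoint and volume quantities~$\tfrac14\Norm{\Piht\eu(\cdot,\tn^-)}{L^2(\Omega)^d}^2 + \tfrac12\Norm{\jump{\Piht\eu}_{n-1}}{L^2(\Omega)^d}^2 + \tfrac{\lambda_n}{2}\Norm{\Piht\eu}{L^2(\Qn)^d}^2$.

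Before invoking the consistency lemmas I would verify that both test functions meet their scaling hypotheses. For~$\zht = \Piht\eu$ the assumptions~\eqref{eq:scaling-test},~\eqref{eq:scaling-vht-aht}, and~\eqref{eq:vh:equiv} hold trivially. For~$\zht^{\star,(n)} = \ItR(\varphi_n\Piht\eu)$ they follow from~$\tfrac12\le\varphi_n\le1$, the stability of~$\ItR$ (Lemma~\ref{lemma:stab-ItR}), and the exactness of the Gauss--Radau rule for degree~$2\ell$: since~$\Piht\eu$ has degree~$\ell$ in time,
\[
\int_{\In}\Norm{\ItR(\varphi_n\Piht\eu)}{\calA,h}^2\dt = \QtRn\big(\varphi_n^2\Norm{\Piht\eu}{\calA,h}^2\big) \le \QtRn\big(\Norm{\Piht\eu}{\calA,h}^2\big) = \int_{\In}\Norm{\Piht\eu}{\calA,h}^2\dt,
\]
and analogously for the~$L^\infty$-in-space and seminorm scalings. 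With~$\varepsilon$ chosen small, the consistency right-hand sides then split into \emph{data residuals}~$r_n$ of the stated orders (the initial-datum term~$((\Id-\IRT)\eu,\zht)_{\SO}$ yielding the stand-alone~$h^{2k+2}$; the time-derivative and convection terms yielding~$\tau_n^{2\ell+3}$ and~$\tau_n h^{2k+1}$; the diffusion term yielding~$\nu(\tau_n^{2\ell+3}+\tau_n h^{2k})$), plus \emph{absorbable} pieces proportional to~$\varepsilon$ times the coercive quantities on the left and to~$\varepsilon\tau_n C_{\u}^{\star}\Norm{\Piht\eu}{L^\infty(\In;L^2(\Omega)^d)}^2$.

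The crux is the velocity-dependent term~$\varepsilon\tau_n C_{\u}^{\star}\Norm{\Piht\eu}{L^\infty(\In;L^2(\Omega)^d)}^2$ from Lemma~\ref{lemma:cons-conv}. In the \emph{weighted} bound it is slab-local: I would use the inverse estimate~\eqref{eq:inverse-estimate-Linf-L2} in the form~$\Norm{\Piht\eu}{L^\infty(\In;L^2)}^2 \le 2\Cinv\lambda_n\Norm{\Piht\eu}{L^2(\Qn)^d}^2$, so that the hypothesis~$\tau_n\le C_\ell/(2C_{\u}^{\star})$ (the threshold~$C_\ell$ being dictated by~$\Cinv$) lets it be absorbed into the coercive~$\tfrac{\lambda_n}{2}\Norm{\Piht\eu}{L^2(\Qn)^d}^2$. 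After this absorption the weighted bound, combined once more with~\eqref{eq:inverse-estimate-Linf-L2} as in~\eqref{eq:L2-Linfty-uht}, yields the slab-by-slab control
\[
\Norm{\Piht\eu}{L^\infty(\In;L^2(\Omega)^d)}^2 \lesssim \Norm{\Piht\eu(\cdot,\tnmo^-)}{L^2(\Omega)^d}^2 + r_n .
\]
Feeding this into the \emph{accumulated} convection residual~$\sum_{m=1}^n\tau_m C_{\u}^{\star}\Norm{\Piht\eu}{L^\infty(I_m;L^2)}^2$ produced by the weak partial bound (which cannot be absorbed slab-by-slab) leaves a genuine Gr\"onwall tail, giving, with~$a_n := \Norm{\Piht\eu(\cdot,\tn^-)}{L^2(\Omega)^d}^2$,
\[
a_n + \sum_{m=1}^n d_m \lesssim \frac{C_{\u}^{\star}}{C_\ell}\sum_{m=1}^n\tau_m\,a_{m-1} + R_n,
\]
where~$d_m$ gathers the slab-$m$ dissipation and~$R_n$ the total data residual.

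Finally I would apply a discrete Gr\"onwall inequality to this recursion, producing the factor~$1+\exp(C_{\u}^{\star} T/C_\ell)$, and then sum the residuals using~$\sum_n\tau_n = T$: the contributions~$\sum_n\tau_n^{2\ell+3}\le T\tau^{2\ell+2}$, $\sum_n\tau_n(\nu+h)h^{2k} = T(\nu+h)h^{2k}$, and the stand-alone~$h^{2k+2}\le(\nu+h)h^{2k}$ assemble into~$\tau^{2\ell+2}+(\nu+h)h^{2k}$; the left-hand side of~\eqref{bound:final} is recovered by taking~$\max_n a_n$ for the~$L^\infty(0,T;L^2)$ norm together with the dissipative~$d_m$ from the weak partial bound. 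The main obstacle I anticipate is precisely this bookkeeping: cleanly separating the convection term's current-slab part (absorbed via~\eqref{eq:inverse-estimate-Linf-L2} and the step restriction) from its accumulated part (handled by Gr\"onwall), while ensuring every hidden constant stays independent of inverse powers of~$\nu$ and free of the pressure---so that~$\nu$ enters only through the benign, nonnegative diffusion contributions---thereby preserving the pressure- and Reynolds-semi-robustness claimed in the statement.
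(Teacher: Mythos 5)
Your proposal is correct and follows essentially the same route as the paper's proof: the same two test functions (the accumulated choice $\zht = \Piht\eu$ on slabs $1,\dots,n$ giving the weak partial bound, and the weighted choice $\ItR(\varphi_n \Piht\eu)$ on each slab), the same consistency Lemmas~\ref{lemma:cons-time}, \ref{lemma:cons-diff}, and~\ref{lemma:cons-conv}, the same absorption of the slab-local convection term $C_{\u}^{\star}\tau_n \Norm{\Piht\eu}{L^\infty(\In;L^2(\Omega)^d)}^2$ under the step restriction via the inverse estimate~\eqref{eq:inverse-estimate-Linf-L2}, and the same feeding of the resulting slab-by-slab bound into the accumulated estimate followed by a discrete Gr\"onwall argument. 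Your explicit verification of the scaling hypotheses \eqref{eq:scaling-test}, \eqref{eq:scaling-vht-aht}, and \eqref{eq:vh:equiv} for the weighted test function (via Gauss--Radau exactness and Lemma~\ref{lemma:stab-ItR}) is a detail the paper leaves implicit, but the mechanism is identical to that of \eqref{finals:4}--\eqref{finals:7} in the paper's proof.
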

\begin{proof}
We start considering, for any $n \in \{1,\ldots,N\}$, the test function $\vht \in \Zht$ such that 
\begin{equation}
\label{eq:test-function-error}
\vht{}_{|_{\Qm}} = \begin{cases}
\Piht \eu {}_{|_{\Qm}} & \text{ if } 1 \le m \le n, \\
0 & \text{otherwise}.
\end{cases}.
\end{equation}

By the same argument {as} in the proof of Lemma \ref{lemma:weak-bound}, we obtain
\begin{equation}\label{finals:1}
\Bht(\uht; \Piht \eu,\vht)  \geq E_n  \, ,
\end{equation}
where the quantity
\begin{equation}\label{finals:E1}
\begin{aligned}
E_n & :=  
{\frac12} \Norm{\Piht \eu(\cdot, \tn^{-})}{L^2(\Omega)^d}^2  + \nu \sum_{m = 1}^n \int_{I_m} \Norm{\Piht \eu(\cdot, t) }{\calA, h}^2 \dt  + {\frac12} \sum_{m = 1}^{n - 1} \Norm{\jump{\Piht \eu}_{m}}{L^2(\Omega)^d}^2 \\
& \quad + \frac14 \Norm{\Piht\eu(\cdot, 0)}{L^2(\Omega)^d}^2
 + {\frac12} \sum_{m = 1}^{n} \sum_{F \in \FhI} \QtRm\Big( \int_F  \gamma_F(\uht) \, |\jump{\Piht \eu}|^2 \dS \Big) \, .
\end{aligned}
\end{equation}
We now apply identity \eqref{eq:discrete-error-equation-simplified} and the Lemmas \ref{lemma:cons-time}, \ref{lemma:cons-diff} and \ref{lemma:cons-conv} to bound the consistency terms in $\Bht(\uht; \Piht \eu,\vht)$ for time derivative, diffusion, and convection, respectively. The initial data error term is handled trivially by the Cauchy--Schwarz inequality and Lemma \ref{lemma:estimates-IRT}. We obtain, for any positive real $\varepsilon$,
\begin{alignat}{3}\label{finals:2}
\nonumber
& \Bht(\uht; \Piht\eu,\vht) \le 
C \varepsilon^{-1} {\cal R}_{\u} \sum_{m=1}^n \Big( {\tau_m^{2 \ell + 3}} + \tau_m (\nu+h) h^{2k} \Big) + C \varepsilon^{-1} {\cal R}_{\u} h^{2k+2} \\
\nonumber
& \ 
+ C\varepsilon \sum_{m=1}^n \Big[ \tau_m \| \Piht \eu \|_{L^\infty(I_m;L^2(\Omega)^d)}^2  \ + \nu \int_{I_m} \!\! \Norm{\Piht \eu(\cdot, t) }{\calA, h}^2
+ \!\!\!\! \sum_{F \in \FhI} \!\!\! \QtRm\Big( \int_F  \gamma_F(\uht) \, |\jump{\Piht \eu}|^2 \Big)
\Big] \\
& \ + C \varepsilon \Norm{\Piht \eu(\cdot, 0)}{L^2(\Omega)^d}^2 {+ C_{\u}^{\star}} \sum_{m=1}^n \tau_m \| \Piht \eu \|_{L^\infty(I_m;L^2(\Omega)^d)}^2\, ,
\end{alignat}
{where~${\cal R}_{\u}$ has the usual meaning, the constant~$C$ depends only on~$k,\ell$, and the shape-regularity parameter~$\rho$, and~$C_{\u}^{\star}$ is as in Lemma~\ref{lemma:cons-conv}.}
Here above, for conciseness of exposition and without loss of generality, we assumed $\nu \lesssim 1$.
Combining \eqref{finals:1} and \eqref{finals:2}, for $\varepsilon$ sufficiently small (but depending only on $k$, $\ell$, and~$\rho$), we obtain
\begin{equation}\label{finals:3}
E_n \le 
C {\cal R}_{\u} \sum_{m=1}^n \Big( {\tau_m^{2 \ell + 3}} + \tau_m (\nu+h) h^{2k} \Big) + C {\cal R}_{\u} h^{2k+2}
+ {C_{\u}^{\star}} \sum_{m=1}^n \tau_m \| \Piht \eu \|_{L^\infty(I_m;L^2(\Omega)^d)}^2 \, .
\end{equation}

We now consider, for any $n \in \{1,2,\ldots,N\}$, the following test function $\vht^{(n)}$ (c.f. \eqref{lambdadef}):
\begin{equation}
\label{eq:super-test-function-error}
\vht^{(n)} {}_{|_{Q_m}}:= \begin{cases}
\ItR(\varphi_n \Piht\eu) & \text{ if } m = n, \\
0 & \text{ otherwise}.
\end{cases}
\end{equation}
Following the same steps as in Proposition \ref{prop:continuous-dependence} (including the inverse estimate in time \eqref{eq:L2-Linfty-uht}, now applied to $\Piht\eu$), we obtain
\begin{equation} \label{finals:4}
\Bht(\uht; \Piht\eu,\vht^{(n)}) \ge {C_{\ell}} \, \widehat{E}_n - \frac{1}{4} \Norm{\Piht \eu(\cdot, t_{n-1}^{-})}{L^2(\Omega)^d}^2 \, ,
\end{equation}
{where the positive constant~$C_{\ell}$ only depends on~$\ell$,} 
and the quantity
$$
\begin{aligned}
\widehat{E}_n := &  
\| \Piht \eu \|_{L^\infty(I_n;L^2(\Omega)^d)}^2
+ \nu \int_{I_n} \Norm{\Piht \eu(\cdot, t) }{\calA, h}^2 \dt  \\
& + \Norm{\jump{\Piht \eu}_{n-1}}{L^2(\Omega)^d}^2 
+ \sum_{F \in \FhI} \QtRn\Big( \int_F  \gamma_F(\uht) \, |\jump{\Piht \eu}|^2 \dS \Big) \, .
\end{aligned}
$$
Similarly as above, we now apply identity \eqref{eq:discrete-error-equation-simplified} and Lemmas \ref{lemma:cons-time}, \ref{lemma:cons-diff}, and \ref{lemma:cons-conv} to bound the consistency terms in $\Bht(\uht; \Piht \eu,\vht^{(n)})$. We obtain
\begin{equation}\label{finals:5}
\begin{aligned}
\Bht(\uht; \Piht\eu,\vht^{(n)}) &  \le
C \varepsilon^{-1} {\cal R}_{\u} \Big( {\tau_n^{2 \ell + 3}} + \tau_n (\nu+h) h^{2k} \Big) \\
& \quad 
+ \varepsilon \Big[ \tau_n \| \Piht \eu \|_{L^\infty(I_n;L^2(\Omega)^d)}^2  + \nu \int_{I_n} \!\! \Norm{\Piht \eu(\cdot, t) }{\calA, h}^2 \\
& \quad 
+ \!\!\!\! \sum_{F \in \FhI} \!\!\! \QtRn\Big( \int_F  \gamma_F(\uht) \, |\jump{\Piht \eu}|^2 \Big)
\Big] + {C_{\u}^{\star} }\tau_n \| \Piht \eu \|_{L^\infty(I_n;L^2(\Omega)^d)}^2 \, ,
\end{aligned}
\end{equation}
where, for simplicity of exposition, we are considering the case $n > 1$. In the specific case $n=1$ also an additional term of order $h^{2k+2}$, associated with the approximation of the initial data on $\Sigma_0$, is present; since such a term is handled very easily (in an analogous way as shown for the previous test function), we omit the related details.  
Combining \eqref{finals:4} and \eqref{finals:5}, for $\varepsilon$ sufficiently small (but depending only on $k,\ell$, and~$\rho$) yields
\begin{equation*} 
\begin{aligned}
\| \Piht \eu \|_{L^\infty(I_n;L^2(\Omega)^d)}^2  \le \widehat{E}_n & \le 
C {\cal R}_{\u} \Big( {\tau_m^{2 \ell + 3}} + \tau_m (\nu+h) h^{2k} \Big) + \frac{1}{4{C_{\ell}}} \Norm{\Piht \eu(\cdot, t_{n-1}^{-})}{L^2(\Omega)^d}^2 \\
& \quad + \tau_n {\frac{C_{\u}^{\star}}{C_{\ell}}} \| \Piht \eu \|_{L^\infty(I_n;L^2(\Omega)^d)}^2
\, .
\end{aligned}
\end{equation*}
{For $\tau_n < C_\ell/(2C_{\u}^{\star})$}, 
the above bound becomes
\begin{equation}\label{finals:7}
\| \Piht \eu \|_{L^\infty(I_n;L^2(\Omega)^d)}^2 \le
{2 C} {\cal R}_{\u} \Big( {\tau_m^{2 \ell + 3}} + \tau_m (\nu+h) h^{2k} \Big) 
{+ \frac{1}{2C_\ell}} \Norm{\Piht \eu(\cdot, t_{n-1}^{-})}{L^2(\Omega)^d}^2 \, .
\end{equation}
The above bound is instrumental to derive the desired estimate.
We now apply \eqref{finals:7} for variable index $m$ in the last sum of \eqref{finals:3}, easily obtaining
\begin{equation*} 
E_n \le
C {\cal R}_{\u} \sum_{m=1}^n \Big( {\tau_m^{2 \ell + 3}} + \tau_m (\nu+h) h^{2k} \Big) + C {\cal R}_{\u} h^{2k+2}
{+ \frac{C_{\u}^{\star}}{2C_\ell}}\sum_{m=1}^n \tau_m \Norm{\Piht \eu(\cdot, t_{m-1}^{-})}{L^2(\Omega)^d}^2 \, ,
\end{equation*}
which holds for all $n \in \{1,2,\ldots, N\}$. Recalling definition \eqref{finals:E1}, which implies $\Norm{\Piht \eu(\cdot, t_{m - 1}^-)}{L^2(\Omega)^d}^2 \le 2 E_{m-1}$, an application of the discrete Gr\"onwall inequality yields the following  bound for the error norm $E_n$: 
\begin{equation*}
E_n \le \exp\Big(\frac{C_{\u}^{\star} }{C_\ell} T \Big) \Big(C {\cal R}_{\u} \sum_{m=1}^n \Big( {\tau_m^{2 \ell + 3}} + \tau_m (\nu+h) h^{2k} \Big) + C {\cal R}_{\u} h^{2k+2} \Big).
\end{equation*}
Finally, the bound 
{for the term}~$\| \Piht \eu \|_{L^\infty(I_n;L^2(\Omega)^d)}^2$, 
$n \in \{1,2,\ldots, N\}$, is recovered applying again \eqref{finals:7}.
\end{proof}

\begin{corollary}[\emph{A priori} error estimate]\label{corol:final}
Under the same assumptions as in Theorem \ref{prop:final}, it holds 
\begin{equation}
\begin{aligned}
\| \eu \|_{L^\infty(0,T;L^2(\Omega)^d)}^2 & + \nu \sum_{n = 1}^N \int_{I_n} \Norm{\eu(\cdot, t) }{\calA, h}^2 \dt 
+ \sum_{n = 1}^{N} \sum_{F \in \FhI} \QtRn\Big( \int_F  \gamma_F(\uht) \, |\jump{\eu}|^2 \dS \Big) \\
& \lesssim \, \sum_{n=1}^N \Big( {\tau_n^{2 \ell + 3}} + \tau_n (\nu+h) h^{2k} \Big) + h^{2k+2} 
\, \lesssim \, {\tau^{2 \ell+2}} + (\nu + h) h^{2k} \, .
\label{eqn:finalEst}
\end{aligned}
\end{equation}
\end{corollary}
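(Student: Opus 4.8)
The plan is to reduce everything to Theorem~\ref{prop:final} by means of the standard splitting $\eu = \epi + \Piht \eu$, where $\epi = (\Id - \Piht)\u$ is the projection error and $\Piht \eu = \Piht \u - \uht$ is the discrete error already controlled in~\eqref{bound:final}. Each of the three quantities on the left-hand side of~\eqref{eqn:finalEst} is the square of a (semi)norm, so I would first apply a triangle inequality together with $(a+b)^2 \le 2a^2 + 2b^2$ to split each of them into a $\Piht \eu$-part and an $\epi$-part. The $\Piht \eu$-parts are precisely a subset of the left-hand side of~\eqref{bound:final}, hence are bounded directly by Theorem~\ref{prop:final}; what remains is to estimate the three $\epi$-parts by pure approximation arguments, where essentially all of the (modest) remaining work lies.

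For the $L^\infty(0,T;L^2)$-part I would invoke Lemma~\ref{lemma:estimate-Piht} with $s=0$, sum over $K \in \Th$, and take the maximum over the time slabs, obtaining a bound of order $h^{2k+2} + \tau^{2\ell+2}$. For the viscous part, the key observation is that $\Norm{\cdot}{\calA,h} \le \Norm{\cdot}{\calA_*,h}$ by~\eqref{def:A*-norm}, so I can simply reuse the estimate $\int_{\In}\Norm{\epi}{\calA_*,h}^2\dt \lesssim \tau_n^{2\ell+3} + \tau_n h^{2k}$ that was already established (as the combined bound for $\Theta_1 + \Theta_2 + \Theta_3$) inside the proof of Lemma~\ref{lemma:cons-diff}; multiplying by $\nu$ and using $\nu \lesssim 1$ and $\sum_n \tau_n = T$, this contributes $\tau^{2\ell+2} + \nu h^{2k}$.

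The upwind $\gamma$-jump part is the only delicate term, and I expect it to be the main obstacle. Here I would exploit that the exact velocity is continuous across interior facets, so that $\jump{\epi} = -\Pt \jump{(\Id - \IRT)\u}$ depends only on the tangential jumps of the Raviart--Thomas interpolant. I would then recycle almost verbatim the treatment of $J_5^{(n)}$ in the proof of Lemma~\ref{lemma:cons-conv}: split $\gamma_F(\uht) = \max\{c_S, \|\uht \cdot \bnF\|_{L^\infty(F)}\}$ into its safeguard and convective parts as in~\eqref{eq:Tsplit}--\eqref{eq:J1p-J2p}, bound the safeguard part by the stability of $\Pt$ together with trace and approximation estimates for $\IRT$, and control the convective part by an inverse inequality in space and the uniform-in-$(h,\tau)$ bound on $\Norm{\uht}{L^\infty(\In;L^2(\Omega)^d)}$ furnished by Proposition~\ref{prop:continuous-dependence}. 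This yields a contribution of order $\sum_n \tau_n h^{2k+1} \lesssim h^{2k+1}$. The difficulty is exactly that $\gamma_F$ couples the unknown discrete coefficient $\uht$ to the interpolation error, so a naive approximation estimate does not suffice and the two-part bound of Lemma~\ref{lemma:cons-conv} must be invoked.

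Finally, I would collect the four bounds and simplify: since $h \lesssim 1$ one has $h^{2k+2} + h^{2k+1} + \nu h^{2k} \lesssim (\nu+h)h^{2k}$, while $\sum_n \tau_n^{2\ell+3} \le \tau^{2\ell+2}\sum_n \tau_n \lesssim \tau^{2\ell+2}$, which produces both inequalities in~\eqref{eqn:finalEst}.
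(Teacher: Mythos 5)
Your proposal is correct and follows essentially the same route as the paper, whose (omitted) proof is precisely a triangle inequality combined with the approximation properties of the operator~$\Piht$ and the bounds already established along the way (Lemma~\ref{lemma:estimate-Piht}, the $\Theta_i$ estimates inside Lemma~\ref{lemma:cons-diff}, and the $J_1'/J_2'$ machinery inside Lemma~\ref{lemma:cons-conv}). In particular, your handling of the $\gamma$-weighted jump term --- reusing the splitting \eqref{eq:Tsplit}--\eqref{eq:J1p-J2p} together with the spatial inverse estimate and the uniform bound on $\Norm{\uht}{L^\infty(0,T;L^2(\Omega)^d)}$ from Proposition~\ref{prop:continuous-dependence} --- fills in exactly the one step that the paper's one-line proof glosses over, and you correctly leave out the time-jump seminorm, which the paper notes is purposefully excluded since it would contribute a term of the type $\sum_{n=1}^N \tau_n^{-1} h^{2k+2}$.
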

\begin{proof}
Since the proof follows easily by a triangle inequality and approximation estimates for the $\Piht$ operator, we omit the details. We underline that the time jump term in~\eqref{bound:final} is purposefully not included in the estimate. Adding such a term would lead to an additional addendum on the right-hand side of the type 
$\sum_{n=1}^N (\tau_n^{-1}h^{2k+2})$.
\end{proof}

\begin{remark}[Pressure- and Reynolds-semi-robustness]\label{remark:final}
The a priori error estimate in Corollary~\ref{corol:final} 
enjoys the following notable properties: 
{\bf (1)} the estimate is optimal in both $h$ and $\tau$ in the diffusive regime, and acquires an additional $h^{1/2}$ pre-asymptotic error reduction rate in convection-dominated cases; 
{\bf (2)} the estimate is Reynolds semi-robust in the sense that the involved constant does not grow when $\nu \rightarrow 0$;
{\bf (3)} the error bound reflects the pressure-robustness of the scheme since it is independent of the pressure~$p$.
\eremk
\end{remark}

\red{
\section{Semi-implicit DG time discretization\label{sec:semi-implicit}}
In this section, we introduce a novel high-order DG time discretization for the incompressible Navier--Stokes equations~\eqref{eq:model-problem}. 
The proposed scheme treats the nonlinear convective term semi-implicitly, which results in a significant reduction of the computational cost while maintaining the stability, robustness, and convergence properties of the fully implicit method in Section~\ref{subsec:fully-discrete-formulation}. 
We start by defining the space
$$
\Sht : = \Vh \otimes P_\ell(0,T) = \big\{ \vht :\Omega \times [0,T] \rightarrow \IR^3 \ : \ 
\vht(\cdot,t) = \sum_{i=0}^\ell {\bf v}_{h,i} \, t^i \ \textrm{ with } \ {\bf v}_{h,i} \in \Vh \big\} \, .
$$
Given any function~$\vht^{(n)} = \vht{}_{|_{\Qn}}$ with $\vht \in \Vht$ and $n \in \{1,2,\ldots,N\}$, we can consider its natural extension (exploiting that the function $\vht^{(n)}$ is polynomial in time) to the whole space--time cylinder~$\QT$. For convenience, and with a small abuse of notation, in this section, we use the same symbol~$\vht^{(n)}$ also to denote such an extension.
Given any function $\vht \in \Vht$ we denote by $\vhtt$ the following function in $\Vht$
\begin{equation}\label{eq:tilded:def} 
\vhtt{}_{|_{\Qn}} := \begin{cases}
\vht^{(n)} {}_{|_{\Qn}} & \text{ if } n = 1, \\[1em]
\vht^{(n - 1)} {}_{|_{\Qn}} & \text{ if } n > 1. 
\end{cases} \qquad n \in \{1,2,\ldots,N\} \, .
\end{equation}
Note that if~$\vht \in \Zht$, then also~$\vhtt \in \Zht$.
Recalling the notation and definitions in Sections~\ref{subsec:mesh-notation}, \ref{subsec:discrete-spaces}, and~\ref{subsec:fully-discrete-formulation}, the proposed semi-implicit space--time formulation reads: find~$\uht \in \Zht$ such that
\begin{equation}
\label{eq:semi-implicit-formulation}
\begin{split}
\Bht(\uhtt; \uht, \zht) = \mht(\uht, \zht) & + \nu \aht(\uht, \zh) + \cht(\uhtt; \uht, \zh) \\
& = (\f, \zht)_{\QT} + (\uo, \zht(\cdot, 0))_{\Omega} \qquad \forall \zht \in \Zht,
\end{split}
\end{equation}

Upon careful perusal of \eqref{eq:semi-implicit-formulation} and \eqref{eq:tilded:def}, it becomes clear that, unlike~\eqref{eq:space-time-formulation}, the system in~\eqref{eq:semi-implicit-formulation} is linear in all time steps (excluding the first one).
We now present the following simple but critical lemma, stating some stability and approximation properties of the tilde operator in \eqref{eq:tilded:def}. 

\begin{lemma}\label{lemma:ext:tilded}
Let~$\eta$ denote $\max_{n=2,\ldots,N}\{ \tau_n/\tau_{n-1} \}$, and let~$(Z, \Norm{\cdot}{Z})$ be any Banach space such that~$Z \subseteq L^1(\Omega)$.
There exists a constant~$\Breve{C} = \Breve{C}(\ell,\eta)$ such that, for all~$\vht \in \Vht$ and~$\v \in L^\infty(0,T;Z)$, the following hold:
\begin{eqnarray}
\label{eq:X:1}
&& \| \vhtt \|_{L^\infty(I_n;Z)} \le \Breve{C} \| \vht \|_{L^\infty(I_{n-1};Z)} \qquad \forall n \in \{2,\ldots,N\} \, , \\
\label{eq:X:2}
&& \| \vhtt \|_{L^\infty(0,T;Z)} \le \Breve{C} \| \vht \|_{L^\infty(0,T;Z)} \, , \\
\label{eq:X:3}
&& \| \v - \vhtt \|_{L^\infty(I_n;Z)} 
\le \Breve{C} \Big( \| \v - \vht \|_{L^\infty(I_{n-1};Z)} + \| \v - \wht \|_{L^\infty(I_{n-1} \cup I_n;Z)} \Big) \, ,
\end{eqnarray}
the latter equation holding for any~$\wht$ in $\Sht$ and all~$n \in \{2,\ldots, N\}$.
\end{lemma}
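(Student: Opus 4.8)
The three bounds are tightly linked, and my plan is to treat \eqref{eq:X:1} as the engine from which the other two follow. Indeed, \eqref{eq:X:2} is just a slab-wise maximum of \eqref{eq:X:1}, while \eqref{eq:X:3} reduces to \eqref{eq:X:1} once one exploits that the tilde operator acts as the identity on the globally-polynomial-in-time space~$\Sht$. So I would first establish \eqref{eq:X:1} and then harvest the remaining two estimates.

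To prove \eqref{eq:X:1}, I fix $n \in \{2,\ldots,N\}$ and recall that $\vhtt{}_{|_{\Qn}} = \vht^{(n-1)}{}_{|_{\Qn}}$ is the polynomial piece $\vht{}_{|_{Q_{n-1}}}$ (of degree at most~$\ell$ in time, with coefficients in~$Z$) re-evaluated on the next slab~$\In$; hence \eqref{eq:X:1} is a polynomial-extrapolation bound. The plan is to pass to a reference interval via the affine map $F_{n-1}(t) := (t - t_{n-2})/\tau_{n-1}$, under which $\Inmo \mapsto [0,1]$ and $\In \mapsto [1,\, 1 + \tau_n/\tau_{n-1}] \subseteq [1, 1+\eta]$. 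Fixing $\ell+1$ reference nodes $\{\hat\theta_i\}_{i=0}^\ell \subset [0,1]$ with Lagrange basis $\{\hat L_i\}_{i=0}^\ell$ and setting $\theta_i^{(n-1)} := F_{n-1}^{-1}(\hat\theta_i) \in \Inmo$, I would write, pointwise a.e. in~$\Omega$,
\[
\vht^{(n-1)}(\cdot, t) = \sum_{i=0}^{\ell} \vht(\cdot, \theta_i^{(n-1)}) \, \hat L_i\big(F_{n-1}(t)\big), \qquad t \in \In,
\]
take the $Z$-norm, and use the triangle inequality together with $F_{n-1}(\In) \subseteq [1,1+\eta]$. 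This yields \eqref{eq:X:1} with the explicit constant $\Breve{C}(\ell,\eta) := \max_{\hat s \in [1, 1+\eta]} \sum_{i=0}^{\ell} |\hat L_i(\hat s)|$, which depends only on~$\ell$ (through the number of nodes) and on~$\eta$.

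For \eqref{eq:X:2}, I would simply observe that $\vhtt{}_{|_{Q_1}} = \vht{}_{|_{Q_1}}$, while \eqref{eq:X:1} controls each subsequent slab by the previous one; taking the maximum over $n$ and bounding each slab norm by the global one gives the claim (with $\Breve{C} \ge 1$, so no new constant appears). For \eqref{eq:X:3}, the key remark is that any $\wht \in \Sht$ is a \emph{single} polynomial in time over $[0,T]$, so its slab piece and its extension both coincide with $\wht$ itself, i.e. $\whtt = \wht$. Using linearity of the tilde map, on $\Qn$ I would then split
\[
\v - \vhtt = (\v - \wht) + \widetilde{(\wht - \vht)},
\]
apply \eqref{eq:X:1} to $\wht - \vht \in \Vht$, and finish with triangle inequalities, bounding the $\Inmo$- and $\In$-norms of $\v - \wht$ by the $\Inmo \cup \In$-norm; relabeling $\Breve{C} \leftarrow 1 + \Breve{C}$ recovers \eqref{eq:X:3}.

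The only genuinely quantitative step is the finiteness and $(\ell,\eta)$-dependence of $\Breve{C}$ in \eqref{eq:X:1}, but this is painless: once the reference-interval/Lagrange representation is in place it amounts to maximizing a fixed continuous function over the compact set $[1,1+\eta]$. I expect no real obstacle here; the conceptual heart of the lemma is instead the structural identity $\whtt = \wht$ on~$\Sht$ underlying \eqref{eq:X:3}, which is precisely what makes the semi-implicit scheme linear after the first slab.
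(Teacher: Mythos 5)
Your proposal is correct and follows essentially the same route as the paper's proof: a Lagrange-basis representation of $\vht{}_{|_{I_{n-1}}}$ whose extrapolation to $I_n$ is controlled by basis functions bounded in terms of $\ell$ and $\eta$, with \eqref{eq:X:2} read off from \eqref{eq:X:1} and \eqref{eq:X:3} obtained from the identity $\whtt = \wht$ on $\Sht$ plus triangle inequalities. The only cosmetic difference is that you pass to a reference interval to make the constant $\Breve{C}(\ell,\eta)$ explicit, whereas the paper works with nodes directly in $I_{n-1}$ and simply asserts the uniform bound on the basis functions.
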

\begin{proof}
Given $n \in \{2,\ldots, N\}$, let $\{ \hat{t}_i \}_{i=0}^{\ell}$ denote a set of distinct points in~$I_{n-1}$ such as, for example, the Gauss-Radau nodes. Let $\{ \phi_j \}_{j=0}^\ell$ denote the associated 
Lagrangian polynomial basis functions, that is, $\phi_j \in \Pp{\ell}{0, T}$ 
and $\phi_j(\hat{t}_i)= \delta_{ij}$ for all~$i,j=0,1, \ldots, \ell$. 
For any~$\vht \in \Vht$, we can write
$$
\vht(\cdot,t)_{|_{I_{n-1}}} = \sum_{j=0}^{\ell} {\bf v}_{h,j} \phi_j(t) \qquad 
\textrm{for all } t \in I_{n-1} \, ,
$$
with ${\bf v}_{h,j} = \vht(\cdot,\hat{t}_j) \in \Vh$ for all $j \in \{0,1,\ldots, \ell\}$. 
By definition~\eqref{eq:tilded:def}, 
$\vhtt(\cdot,t)_{|_{I_n}} = \sum_{j=0}^{\ell} {\bf v}_{h,j} \phi_j(t)$ for all $t \in \In$. 
By the triangle inequality and noting that all~$\| \phi_j(t) \|_{L^\infty(I_n)}$ are bounded by a uniform constant~$C$ only depending on~$\ell$ and $\eta$, 
we derive
$$
\| \vhtt \|_{L^\infty(I_n;Z)} \le \sum_{j=0}^\ell \| {\bf v}_{h,j} \|_Z \| \phi_j(t) \|_{L^\infty(I_n)}
\lesssim \max_{j=0,1,\ldots,\ell} \| {\bf v}_{h,j} \|_Z \le \| \vht \|_{L^\infty(I_{n-1};Z)} \, ,
$$
where the hidden constant depends only on~$\ell$ and~$\eta$. This proves \eqref{eq:X:1}, and bound~\eqref{eq:X:2} follows immediately from definition \eqref{eq:tilded:def} and~\eqref{eq:X:1}.

We now prove bound~\eqref{eq:X:3}. Let~$\wht$ in~$\Sht$ and~$n \in \{2,\ldots,N\}$. 
Since $\whtt=\wht$, using the triangle inequality and applying~\eqref{eq:X:1}, we easily obtain 
$$
\begin{aligned}
\| \v - \vhtt \|_{L^\infty(I_n;Z)} & \le \| \v - \wht \|_{L^\infty(I_n;Z)} + \| \vhtt - \whtt \|_{L^\infty(I_n;Z)} \\
& \lesssim \| \v - \wht \|_{L^\infty(I_n;Z)} + \| \vht - \wht \|_{L^\infty(I_{n-1};Z)} \\
& \le \| \v - \wht \|_{L^\infty(I_{n-1} \cup I_n;Z)} + \| \v - \vht \|_{L^\infty(I_{n-1};Z)} \, .
\end{aligned}
$$
\end{proof}
Combining a suitable choice of~$\wht$ in~\eqref{eq:X:3} with approximation properties (in the spirit of Lemma \ref{lemma:estimate-Piht}), we can easily obtain, whenever $\u$ is sufficiently regular for the right-hand side to make sense, 
\begin{equation}
\label{eq:aux-estimate-tilde}
\begin{aligned}
\| \v - \vhtt \|_{L^\infty(I_n;L^2(\Omega)^d)} & \lesssim
\| \v - \vht \|_{L^\infty(I_{n-1};L^2(\Omega)^d)} + 
h^{k+1} \Norm{\v}{L^\infty(\In\cup I_{n-1}; H^{k+1}(\Omega)^d)} \\
& \quad + \tau_n^{\ell + 1} \Norm{\v}{W^{\ell+1}_{\infty}(\In\cup I_{n-1};L^2(\Omega)^d)}
\qquad \forall n \in \{2,\ldots, N\} \, .
\end{aligned}
\end{equation} 
Note that the hidden constant here above depends, in addition to the usual quantities, also on the parameter $\eta$ introduced above.

The following result, which is analogous to Theorem~\ref{thm:existence-discrete-solutions} above, states the existence of discrete solutions to the space--time formulation~\eqref{eq:semi-implicit-formulation}, as well as its unconditional stability.
\begin{theorem}[Existence of discrete solutions]
\label{thm:existence-discrete-solutions-semi-implicit}
Given~$\f \in L^1(0, T; L^2(\Omega)^d)$ and~$\u_0 \in \Z$, there exists at least a solution~$\uht \in \Zht$ to the semi-implicit space--time formulation~\eqref{eq:semi-implicit-formulation}, which satisfies
\begin{equation}
\label{eq:continuous-dependence-nonlinear-semi-implicit}
\Tnorm{\uht}{\uhtt}^2 \le \Clin \big(\Norm{\f}{L^1(0, T; L^2(\Omega)^d)}^2 + \Norm{\u_0}{L^2(\Omega)^d}^2 \big).
\end{equation}
\end{theorem}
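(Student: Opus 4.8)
The plan is to mirror the Schauder fixed-point argument of Theorem~\ref{thm:existence-discrete-solutions}, the only genuinely new ingredient being the tilde operator of~\eqref{eq:tilded:def}. The key observation is that the semi-implicit formulation~\eqref{eq:semi-implicit-formulation} is \emph{exactly} the linearized problem~\eqref{eq:kernel-linearized-problem} evaluated at the convective coefficient~$\uhtt$, together with the fact (recorded after~\eqref{eq:tilded:def} and in Lemma~\ref{lemma:ext:tilded}) that $\uht \in \Zht$ implies $\uhtt \in \Zht$. Consequently, the entire stability machinery of Section~\ref{sec:well-posedness}---and in particular the continuous-dependence estimate of Proposition~\ref{prop:continuous-dependence}, whose constant $\Clin$ is \emph{independent of the convective coefficient}---applies verbatim with coefficient $\wht = \uhtt$.

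First I would introduce the map $\Phi : \Zht \to \Zht$ sending $\wht \mapsto \uht$, where $\uht$ is the unique solution of the \emph{linear} problem $\Bht(\whtt; \uht, \zht) = (\f, \zht)_{\QT} + (\u_0, \zht(\cdot, 0))_{\Omega}$ for all $\zht \in \Zht$. Since $\whtt \in \Zht$ by Lemma~\ref{lemma:ext:tilded}, Remark~\ref{rem:existence-linearized} guarantees that this problem is well posed, so $\Phi$ is well defined; moreover, $\Phi$ is continuous, because the coefficient-to-solution map of the associated (uniformly invertible) linear system depends continuously on $\whtt$ through the form $\cht(\whtt;\cdot,\cdot)$ and the facet weights $\gamma_F(\whtt)$ of~\eqref{def:gamma}, while $\wht \mapsto \whtt$ is itself linear. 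A fixed point $\uht = \Phi(\uht)$ then satisfies $\Bht(\uhtt; \uht, \zht) = (\f, \zht)_{\QT} + (\u_0, \zht(\cdot,0))_{\Omega}$, which is precisely~\eqref{eq:semi-implicit-formulation}.

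To localize the fixed point, I would apply Proposition~\ref{prop:continuous-dependence} to $\uht = \Phi(\wht)$ with coefficient $\whtt$, obtaining $\Tnorm{\uht}{\whtt}^2 \le \Clin\big(\Norm{\f}{L^1(0,T;L^2(\Omega)^d)}^2 + \Norm{\u_0}{L^2(\Omega)^d}^2\big)$. Because the safeguard constant $c_S$ in~\eqref{def:gamma} forces $\SemiNorm{\cdot}{\gamma, {\bf 0}} \le \SemiNorm{\cdot}{\gamma, \whtt}$, and hence $\Tnorm{\cdot}{{\bf 0}} \le \Tnorm{\cdot}{\whtt}$, the image $\Phi(\Zht)$ lies in the same closed convex ball $\calB := \{\vht \in \Zht : \Tnorm{\vht}{{\bf 0}}^2 \le \Clin(\cdots)\}$ used in Theorem~\ref{thm:existence-discrete-solutions}. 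Since $\Zht$ is finite dimensional, $\calB$ is compact, so the Schauder fixed-point theorem~\cite[Thm.~4.1.1 in Ch.~4]{Smart:1974} yields a fixed point $\uht \in \calB$ solving~\eqref{eq:semi-implicit-formulation}. The stability bound~\eqref{eq:continuous-dependence-nonlinear-semi-implicit} then follows from one last application of Proposition~\ref{prop:continuous-dependence}, now with the solenoidal coefficient $\uhtt$.

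I do not expect a real obstacle: the heavy lifting is already carried out by Proposition~\ref{prop:continuous-dependence}, and the semi-implicit coupling enters only through the tilde operator, whose $\Zht$-preservation is elementary. The single point deserving care is the legitimacy of invoking Proposition~\ref{prop:continuous-dependence} at coefficient $\uhtt$: this rests on $\uhtt$ being solenoidal (so that the nonnegativity identity $\cht(\uhtt; \uht, \uht) = \SemiNorm{\uht}{\gamma, \uhtt}^2 \ge 0$ of~\eqref{eq:identity-cht} is preserved) and on $\Clin$ being independent of the coefficient, both of which hold. One could alternatively argue constructively, solving slab-by-slab and exploiting that, by~\eqref{eq:tilded:def}, the coefficient on slab $n > 1$ is already fixed by the previous slab so that only the first slab is nonlinear; but the global Schauder argument is shorter and parallels Theorem~\ref{thm:existence-discrete-solutions} essentially word for word.
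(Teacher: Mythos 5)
Your proposal is correct and follows essentially the same route as the paper: the same ball~$\calB$, the same map~$\Phi$ sending~$\wht$ to the solution of the linearized problem with coefficient~$\whtt$, the observation~$\Tnorm{\cdot}{{\bf 0}} \le \Tnorm{\cdot}{\whtt}$ so that~$\Phi(\Zht) \subset \calB$, and the Schauder fixed-point theorem, with the final bound obtained from Proposition~\ref{prop:continuous-dependence} at the solenoidal coefficient~$\uhtt$. The only (immaterial) divergence is in verifying continuity of~$\Phi$: you argue directly via continuous dependence of the (uniformly invertible) system matrix on~$\whtt$, whereas the paper uses uniform boundedness of~$\{\Phi(\wht^{(s)})\}$, extraction of a convergent subsequence, continuity of the tilde operator from~\eqref{eq:X:2}, and uniqueness of the linearized problem to identify the limit; both verifications are valid.
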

\begin{proof}
Similar to as in the proof of Theorem~\ref{thm:existence-discrete-solutions}, we define the ball
\begin{equation*}
\calB := \left\{\wht \in \Zht \, : \, \Tnorm{\wht}{{\bf 0}}^2 \le \Clin \big(\Norm{\f}{L^1(0, T; L^2(\Omega)^d)}^2 + \Norm{\u_0}{L^2(\Omega)^d}^2 \big) \right\},
\end{equation*}
and the map~$\Phi : \Zht \to \Zht$, which assigns to each~$\wht \in \Zht$ the element~$\uht \in \Zht$, where~$\uht$ is the solution to the linearized space--time formulation~\eqref{eq:kernel-linearized-problem} with discrete coefficient~$\whtt$.

Since~$\wht \in \Zht$, then~$\Div \whtt = 0$ in~$\QT$, which implies that~$\whtt \in \Zht$. Moreover, the following stability result holds (cf. Proposition~\ref{prop:continuous-dependence}): 
\begin{equation*}
\Tnorm{\uht}{\whtt}^2 \le \Clin \Norm{\f}{L^1(0, T; L^2(\Omega)^d)}^2 + \Norm{\u_0}{L^2(\Omega)^d}^2,
\end{equation*}
with the same constant~$\Clin$ as in Proposition~\ref{prop:continuous-dependence}.
Thus, the map~$\Phi$ is well defined, and since~$\Tnorm{\wht}{{\bf 0}} \le \Tnorm{\wht}{{\whtt}}$ for any $\wht \in \Zht$, we also have~$\Phi(\Zht) \subset \calB$. 

Since~$\Phi(\Zht) \subset \calB$, any sequence~$\{\wht^{(s)}\}_{s \in \IN} \subset \Zht$ converging to some~$\wht \in \Zht$ is such that the corresponding sequence~$\{\uht^{(s)}\}_{s \in \IN}$ is uniformly bounded in the norm~$\Tnorm{\cdot}{\bf 0}$. 
As a consequence, up to a subsequence, $\{\uht^{(s)}\}_{s \in \IN}$ converges to some~$\zht \in \Zht$. Since the~``tilde" operator is continuous due to~\eqref{eq:X:2} in Lemma~\ref{lemma:ext:tilded}, it follows that~$\whtt^{(s)} \to \whtt$. This, combined with the fact that~$\uht^{(s)} \to \zht$, implies that~$\zht$ satisfies the linearized problem~\eqref{eq:semi-implicit-formulation} with coefficient~$\whtt$. The uniqueness of the solution to~\eqref{eq:kernel-linearized-problem} then ensures that~$\zht = \Phi(\wht)$. This shows the continuity of~$\Phi$. 

The result then follows by the Schauder fixed-point theorem (see, e.g., \cite[Thm.~4.1.1 in Ch.~4]{Smart:1974}). 
\end{proof}

\begin{remark}[Insights on the semi-implicit DG time discretization]
The scheme introduced in this section can be seen as an extension of the semi-implicit Euler scheme used in~\cite[\S3.1]{Han_Hou:2021} to high-order approximations. In fact, for~$\ell = 0$, method~\eqref{eq:semi-implicit-formulation} reduces to the one in~\cite[Eq.~(3.3)]{Han_Hou:2021} with the only difference that we solve a nonlinear system of equations for the first time slab. 
The reason is that, for~$\ell > 0$, lifting the initial condition to the partial cylinder~$Q_1$ as a constant function in time is not enough to get high-order convergence. 
Nonetheless, for~$n > 1$, as already observed, the system of equations arising from~\eqref{eq:semi-implicit-formulation} is linear. 
\eremk
\end{remark}

\begin{lemma}[Bound for the convection term]\label{lemma:cons-conv-new}
Let $\u$ denote the velocity solution to the continuous weak formulation~\eqref{eq:weak-formulation} and let~$\uht \in \Zht$ be the discrete solution to the 
semi-implicit space--time formulation~\eqref{eq:semi-implicit-formulation}. Let the following regularity assumptions hold: $\u \in L^\infty(0,T;W^{k+1}_4(\Omega)^d)$, 
$\u \in W^{\ell+1}_{\infty}(0,T;L^2(\Omega)^d)$,  $(\Nabla \u) \u \in W^{\ell+1}_{\infty}(0,T;L^2(\Omega)^d)$. Let furthermore $\vht$ be any function in $\Zht$ satisfying (for all $n \in \{1,2,\ldots,N\}$)
\begin{equation}\label{eq:vh:equiv-new}
\esssup_{t \in I_n} | \vht(\cdot,t) |_S \lesssim  \esssup_{t \in I_n} | \Piht \eu (\cdot,t) |_S \, ,
\end{equation}
where $| \cdot |_S$ denotes any seminorm on $\Vh$ and the hidden constant depends only on $\ell$. Then, for any positive real $\varepsilon$ and $n \in \{1,2,\ldots,N\}$, it holds
$$
\begin{aligned}
& \big| \cht^{(n)}(\uhtt; \Piht \u, \vht) - \int_{I_n} \! c(\u; \u, \vht)\dt \big|
\le   C \, {\cal R}_{\u} \tau_n \varepsilon^{-1}  \Big( h^{2k+1} {+ \tau_n^{2\ell+2}}  \Big) \\
& \ + \varepsilon C \Big( \tau_n \,  \| \Piht \eu \|_{L^\infty(I_n;L^2(\Omega)^d)}^2 +  
\sum_{F \in \FhI} \QtRn\Big( \gamma_F(\uhtt) \, \|\jump{\vht}\|_{L^2(F)^d}^2 \Big) \\
& \ 
+ \tau_n C^\star_{\u} \| \Piht \eu \|_{L^\infty(\Inmo \cup \In; L^2(\Omega)^d)}^2  \, .
\end{aligned}
$$
where $C^\star_{\u} = \widetilde{C} \| \u \|_{L^\infty(0, T;W^{1}_{\infty}(\Omega)^d)}$ and both constants $C,\widetilde{C}$ only depend on $k$, $\ell$, $\rho$, and~$\eta$. The term ${\cal R}_{\u}$ depends on~$\u$ evaluated in the norms of the regularity assumption in the statement of this lemma. 
\end{lemma}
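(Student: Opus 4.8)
The plan is to mirror the proof of Lemma~\ref{lemma:cons-conv} step by step, the only structural change being that the convective coefficient is now the extrapolated field $\uhtt$ rather than $\uht$. First I would reproduce the decomposition~\eqref{eq:conv:start} of $\cht^{(n)}(\uhtt;\Piht\u,\vht)-\int_{\In}c(\u;\u,\vht)\dt$ into the quadrature-consistency term, the volume and facet convective contributions, and the stabilization term. Exploiting the bilinearity of $\chtb(\cdot;\cdot,\cdot)$ and $\chth(\cdot;\cdot,\cdot)$ in their first two arguments together with the antisymmetry of $\chtbh(\cdot;\cdot,\cdot)$, the coefficient enters only through the error $\u-\uhtt$, which I would split—using the linearity of the tilde operator, so that $\widetilde{\Piht\u}-\uhtt=\widetilde{\Piht\u-\uht}=\widetilde{\Piht\eu}$—as
\[
\u-\uhtt=\epi+\widetilde{\Piht\eu}+\big(\Piht\u-\widetilde{\Piht\u}\big).
\]

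The terms carrying the exact coefficient $\u$ (the quadrature error and the $\chtbh(\u;\epi,\vht)$ contribution) and the term with coefficient $\epi$ are \emph{identical} to the addenda $J_1^{(n)}$, $J_2^{(n)}$ and $J_4^{(n)}$ of Lemma~\ref{lemma:cons-conv}; I would simply invoke those bounds, which furnish the $\varepsilon^{-1}\Ru\tau_n(h^{2k+1}+\tau_n^{2\ell+2})$ consistency contributions and the $\varepsilon$-controlled jump seminorm. For the stabilization term $J_5^{(n)}$ the only change is the weight $\gamma_F(\uhtt)$ in place of $\gamma_F(\uht)$: I would repeat the splitting~\eqref{eq:Tsplit}--\eqref{eq:J1p-J2p} verbatim, leaving the $c_S$-part untouched and, in the $\|\uhtt\|_{L^\infty(F)^d}$-part, invoking the tilde stability~\eqref{eq:X:2} together with the uniform bound on $\|\uht\|_{L^\infty(0,T;L^2(\Omega)^d)}$ guaranteed by Theorem~\ref{thm:existence-discrete-solutions-semi-implicit}, which recovers the same $\varepsilon^{-1}\Ru\tau_n h^{2k+1}$ estimate. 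Finally, \eqref{eq:equiv:jmp} applied with $\wht=\uhtt$ produces the $\gamma_F(\uhtt)$-weighted jump term appearing on the right-hand side of the statement.

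The genuinely new work concerns the two remaining coefficient pieces. The contribution with discrete coefficient $\widetilde{\Piht\eu}$ I would estimate exactly as $J_3^{(n)}$ in Lemma~\ref{lemma:cons-conv}, splitting it into its $\chtb$ (volume) and $\chth$ (facet) parts and using scaled trace and polynomial inverse inequalities—legitimate because $\widetilde{\Piht\eu}\in\Zht$ is discrete—and then applying the tilde stability~\eqref{eq:X:1} to convert each $L^\infty(\In;\cdot)$ norm of $\widetilde{\Piht\eu}$ into the corresponding $L^\infty(\Inmo;\cdot)$ norm of $\Piht\eu$. Combined with~\eqref{eq:vh:equiv-new} and a Young inequality, this is precisely what yields the term $\tau_n C_{\u}^{\star}\|\Piht\eu\|_{L^\infty(\Inmo\cup\In;L^2(\Omega)^d)}^2$, the union of the two slabs being the signature of the semi-implicit coupling. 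The last piece $\Piht\u-\widetilde{\Piht\u}$ is a pure consistency defect: it is discrete, and by~\eqref{eq:aux-estimate-tilde} (equivalently~\eqref{eq:X:3} with a well-chosen $\wht\in\Sht$) its relevant norms are of order $\ORDER{h^{k+1}+\tau_n^{\ell+1}}$, so after Young it contributes only to the $\Ru\tau_n(h^{2k+1}+\tau_n^{2\ell+2})$ budget. Assembling all contributions completes the estimate.

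The delicate point—and the reason for choosing the above splitting—is the treatment of the DG facet terms carrying the extrapolated coefficient: a direct bound on $\u-\uhtt$ on inter-element facets would demand $H^1$-control of a non-polynomial field transported across a time-slab shift, which is not available. Writing $\u-\uhtt$ as the sum of the approximation error $\epi$, the \emph{discrete} field $\widetilde{\Piht\eu}$, and the \emph{discrete} defect $\Piht\u-\widetilde{\Piht\u}$ is what makes the standard polynomial inverse/trace machinery together with the tilde stability~\eqref{eq:X:1}--\eqref{eq:X:3} applicable piecewise. I expect this bookkeeping, and in particular the careful propagation of the previous-slab norm $\|\Piht\eu\|_{L^\infty(\Inmo;\cdot)}$ through the facet estimates, to be the main technical hurdle; the individual bounds are otherwise routine adaptations of Lemma~\ref{lemma:cons-conv}, and the resulting $L^\infty(\Inmo\cup\In;L^2(\Omega)^d)$ dependence is exactly the form that the subsequent discrete Grönwall argument can absorb.
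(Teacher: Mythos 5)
Your proposal is correct and follows essentially the paper's own strategy: reuse the quadrature term $J_1^{(n)}$ and the term $J_2^{(n)}=\QtRn\big(\chtbh(\u;\epi,\vht)\big)$ verbatim from Lemma~\ref{lemma:cons-conv}; handle the stabilization term with weight $\gamma_F(\uhtt)$ through the splitting \eqref{eq:Tsplit}--\eqref{eq:J1p-J2p}, the tilde stability of Lemma~\ref{lemma:ext:tilded}, and the uniform bound on $\Norm{\uht}{L^{\infty}(0,T;L^2(\Omega)^d)}$ from Theorem~\ref{thm:existence-discrete-solutions-semi-implicit}; and extract the previous-slab term $\tau_n C_{\u}^{\star}\Norm{\Piht\eu}{L^{\infty}(\Inmo\cup\In;L^2(\Omega)^d)}^2$ from the slab shift. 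The one point where you genuinely diverge is the treatment of the coefficient perturbation: the paper keeps it whole, defining $\widetilde{J}_3^{(n)}:=\big|\QtRn\big(\chtbh(\u-\uhtt;\Piht\u,\vht)\big)\big|$ and bounding $\Norm{\u-\uhtt}{L^{\infty}(\In;L^2(\Omega)^d)}$ directly via \eqref{eq:aux-estimate-tilde} plus a triangle inequality (so the analogues of $J_3^{(n)}$ and $J_4^{(n)}$ are merged into one term), whereas you split $\u-\uhtt=\epi+\widetilde{\Piht\eu}+\big(\Piht\u-\widetilde{\Piht\u}\big)$ and give each piece the tool suited to it: a $J_4$-type bound, a $J_3$-type bound followed by \eqref{eq:X:1}, and an approximation bound via \eqref{eq:X:3}. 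Both routes deliver the stated estimate with the same dependence of the constants on $k$, $\ell$, $\rho$, and $\eta$, but your finer splitting buys extra rigor precisely where the paper is terse: in the facet contribution $\widetilde{J}_{3,2}^{(n)}$ the paper asserts one can ``proceed as for $J_{3,2}^{(n)}$'', which implicitly applies a polynomial inverse trace estimate to the non-polynomial field $\u-\uhtt$ (a direct scaled trace bound would instead produce $H^1$-type terms of $\u-\uhtt$ whose $\nu$-robust control is not immediate); in your version every factor entering a facet integral is either discrete, so that inverse and trace estimates are legitimate, or an approximation error carrying the needed powers of $h$ and $\tau_n$ --- exactly the point you flag in your closing remark. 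The only cosmetic omission is that you do not single out the slab $n=1$, where $\uhtt=\uht$ and the lemma reduces to Lemma~\ref{lemma:cons-conv}; this is how the paper opens its proof, and your argument degenerates correctly to that case.
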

\begin{proof}
For~$n = 1$, the estimate can be obtained exactly as in Lemma~\ref{lemma:cons-conv}. Let~$n \in \{2, \ldots, N\}$, and recall Remark~\ref{rem:integr} and the forms defined in~\eqref{eq:def-convec-forms}. Adding and subtracting suitable terms, we get
\begin{alignat}{3}
\label{eq:split-new-conv}
\big|\cht^{(n)}(\uhtt; \Piht \u, \vht) - \int_{\In} c(\u; \u, \vht) \dt \big| \le J_1 + J_2 + \widetilde{J}_3^{(n)} + \widetilde{J}_5^{(n)},
\end{alignat}
where~$J_1^{(n)}$ and~$J_2^{(n)}$ are the same terms as in~\eqref{eq:conv:start}, and
\begin{equation*}
\begin{split}
\widetilde{J}_3^{(n)} &:= \big|\QtRn\big(\chtbh(\u - \uhtt; \Piht \u, \vht) \big) \big|, \\
\widetilde{J}_5^{(n)} & := \big|\QtRn \big( \chtt(\uhtt; \Piht \u, \vht) \big) \big|.
\end{split}
\end{equation*}

We bound the term~$\widetilde{J}_3^{(n)}$ as follows:
\begin{equation}
\begin{split}
\widetilde{J}_3^{(n)} & \le \big|\QtRn \big( \chtb(\u - \uhtt; \Piht \u, \vht) \big) \big| + \big|\QtRn \big( \chth(\u - \uhtt; \Piht \u, \vht) \big) \big| \\
& =: \widetilde{J}_{3,1}^{(n)} + \widetilde{J}_{3,2}^{(n)}.
\end{split}
\end{equation}

Using the definition of the quadrature rule, suitable H\"older inequalities, the stability properties of the projection operator~$\Piht$, bound~\eqref{eq:aux-estimate-tilde} and the triangle inequality, we obtain
\begin{alignat}{3}
\nonumber
\widetilde{J}_{3, 1}^{(n)} & = \big| \QtRn \big((\jump{\Nablah \Piht \u} (\u - \uhtt), \vht)_{\Omega} \big) \big| \\
\nonumber
& \le \tau_n \Norm{\Nablah \Piht \u}{L^{\infty}(\In; L^{\infty}(\Omega)^{d\times d})} \Norm{\u - \uhtt}{L^{\infty}(\In; L^2(\Omega)^d)} \Norm{\vht}{L^{\infty}(\In; L^2(\Omega)^d)} \\
\nonumber
& \le \widehat{C} \tau_n \Norm{\u}{L^{\infty}(\In; W^{1}_{\infty}(\Omega)^{d})} \Norm{\u - \uhtt}{L^{\infty}(\In; L^2(\Omega)^d)} \Norm{\vht}{L^{\infty}(\In; L^2(\Omega)^d)} \\
\nonumber
& \le \widehat{C} \Breve{C} \tau_n \Norm{\u}{L^{\infty}(\In; W^1_{\infty}(\Omega)^d)} \big(\Norm{\u - \uht}{L^{\infty}(\Inmo; L^2(\Omega)^d)} + 
(h^{k+1} + \tau_n^{\ell + 1}) \Ru \big) \Norm{\vht}{L^{\infty}(\In; L^2(\Omega)^d)} \\
\nonumber
& \le \widehat{C} \Breve{C} \tau_n \Norm{\u}{L^{\infty}(\In; W^1_{\infty}(\Omega)^d)} \big(\Norm{\u - \Piht \u}{L^{\infty}(\Inmo; L^2(\Omega)^d)} + (h^{k+1} + \tau_n^{\ell + 1}) \Ru \\
\label{eq:J31-new}
& \quad + \Norm{\Piht \eu}{L^{\infty}(\Inmo; L^2(\Omega)^d)}\big) \Norm{\vht}{L^{\infty}(\In; L^2(\Omega)^d)}.
\end{alignat}
The above inequality, together with the approximation properties in Lemma~\ref{lemma:estimate-Piht} for~$\Piht$, the assumption in~\eqref{eq:vh:equiv-new}, and the Young inequality with parameter~$\varepsilon > 0$, yields (recall that~$\Ru$ may change at each occurrence)
\begin{alignat*}{3}
\widetilde{J}_{3,1} \le C \varepsilon^{-1} \Ru ( \tau_n^{2\ell + 3} + \tau_n h^{2k + 2}) +  C \varepsilon \tau_n \Norm{\Piht \eu}{L^{\infty}(\In; L^2(\Omega)^d)}^2 + C_{\u}^{\star} \tau_n \Norm{\Piht \eu}{L^{\infty}(\Inmo \cup \In; L^2(\Omega)^d)}^2,
\end{alignat*}
where~$C_{\u}^{\star} = \widetilde{C} \Norm{\u}{L^{\infty}(\In; W^1_{\infty}(\Omega)^d)}$, and the constants~$C$ and~$\widetilde{C}$ depend only on~$k$, $\ell$, $\rho$, and~$\eta$. 

Proceeding as for the term~$J_{3,2}^{(n)}$ in~\eqref{eq:set:1}, we get
\begin{alignat}{3}
\nonumber
\widetilde{J}_{3, 2}^{(n)} & = \big| \QtRn \big(( (\u - \uhtt)\cdot \bnF \jump{\Piht \u}, \mvl{\vht})_{\Omega} \big) \big| \\
& \lesssim \widehat{C}  \tau_n \Norm{\u}{L^{\infty}(\In; W^1_{\infty}(\Omega)^d)} \Norm{\u - \uhtt}{L^{\infty}(\In; L^2(\Omega)^d)} \Norm{\vht}{L^{\infty}(\In; L^2(\Omega)^d)},
\end{alignat}
where we note that the right-hand side exactly corresponds to the third line in~\eqref{eq:J31-new}. Therefore,
\begin{equation*}
\widetilde{J}_{3,2}^{(n)} \le C \varepsilon^{-1} \Ru ( \tau_n^{2\ell + 3} + \tau_n h^{2k + 2}) +  C \varepsilon \tau_n \Norm{\Piht \eu}{L^{\infty}(\In; L^2(\Omega)^d)}^2 + C_{\u}^{\star} \tau_n \Norm{\Piht \eu}{L^{\infty}(\Inmo \cup \In; L^2(\Omega)^d)}^2.
\end{equation*}

We now focus on the term~$\widetilde{J}_5$. Following the same steps as in~\eqref{eq:new:1} and~\eqref{eq:Tsplit} for~$J_5^{(n)}$, we obtain
\begin{alignat}{3}
\nonumber
\widetilde{J}_5^{(n)} & = \frac12 \big|\QtRn\big((\gamma (\uhtt) \jump{\Piht \u}, \vht)_{\FhI} \big) \big| \\
& \lesssim \tau_n^{1/2}\big(J_1' + \widetilde{J}_2'\big)^{\frac12} \Big(\sum_{F \in \FhI} \QtRn\big(\gamma_F (\uhtt) \Norm{\vht}{L^2(F)^d}^2 \big) \Big)^{\frac12},
\end{alignat}
with~$J_1'$ as in~\eqref{eq:J1p-J2p}, and
\begin{equation*}
\widetilde{J}_2' := \sum_{F \in \FhI} \Norm{\uhtt}{L^{\infty}(\In; L^{\infty}(F)^d)} \Norm{\jump{\Pt (\Id - \IRT) \u}}{L^{\infty}(\In; L^2(F)^d)}^2.
\end{equation*}
The same steps used in~\eqref{eq:case-T2} to treat the term~$J_2'$, combined with~\eqref{eq:X:1} in Lemma~\ref{lemma:ext:tilded} and the fact that~$\Norm{\uht}{L^{\infty}(\In; L^2(\Omega)^d)}$ is bounded uniformly in~$h$ and~$\tau$, easily lead to
\begin{equation*}
\widetilde{J}_2' \lesssim h^{2k + 1} \Ru.
\end{equation*}
The following bound for the term~$\widetilde{J}_4^{(n)}$ can then be obtained applying~\eqref{eq:case-T1} and a Young inequality:
\begin{equation}
\widetilde{J}_5 \lesssim \varepsilon^{-1} \tau_n h^{2k + 1} \Ru + \varepsilon \sum_{F \in \FhI} \Big(\gamma_F(\uhtt) \Norm{\jump{\vht}}{L^2(F)^d}^2 \Big).
\end{equation}
The desired result then follows by combining the estimates for~$J_1^{(n)}$, $J_2^{(n)}$, $\widetilde{J}_3^{(n)}$, and~$\widetilde{J}_5^{(n)}$ into~\eqref{eq:split-new-conv}.
\end{proof}

We recall the definition of the error function~$\eu = \u - \uht$ and of the composed projection~$\Piht = \Pt \circ \IRT$. The next result shows that the semi-implicit scheme converges with the same convergence rates as the fully discrete scheme~\eqref{eq:kernel-space-time-problem}, although with a slightly more stringent step length restriction.
\begin{theorem}[Estimate for the discrete 
error]\label{prop:final-semi-implicit}
Let~$\eta = \max_{n=2,\ldots,N}\{ \tau_n/\tau_{n-1} \}$. Let also~$\u$ denote the velocity solution to the continuous weak formulation~\eqref{eq:weak-formulation}, and let~$\uht \in \Zht$ be the discrete solution to the space--time formulation~\eqref{eq:semi-implicit-formulation}. Let $\u\in W^{1}_{\infty}(0,T; H^{k+1}(\Omega)^d) \cap L^{\infty}(0, T; W^{k+1}_{4}(\Omega)^d) \cap W^{\ell + 1}_{\infty}(0, T; H^2(\Omega)^d)$, and~$(\Nabla \u) \u \in W^{\ell + 1}_\infty (0, T; L^2(\Omega)^d)$.
Then, there exists a positive constant~$C_{\ell}$ depending only on~$\ell$ such that, if~$\tau_n \le C_\ell/(4 \eta C_{\u}^{\star})$ for all~$n \in \{1,2,\ldots,N\}$ with~$C_{\u}^{\star}$ as in Lemma~\ref{lemma:cons-conv-new}, it holds
\begin{equation}\label{bound:final-semi-implicit}    
\begin{aligned}
\| \Piht \eu \|_{L^\infty(0,T;L^2(\Omega)^d)}^2 & + \nu \sum_{n = 1}^N \int_{I_n} \Norm{\Piht \eu(\cdot, t) }{\calA, h}^2 \dt + \sum_{n = 1}^{N-1} \Norm{\jump{\Piht \eu}_{n}}{L^2(\Omega)^d}^2 \\ 
&  + \sum_{n = 1}^{N} \sum_{F \in \FhI} \QtRn\Big( \int_F  \gamma_F(\uhtt) \, |\jump{\Piht \eu}|^2 \dS \Big) \\
& \lesssim \, \Big(1 + \exp\Big(\frac{\eta C_{\u}^{\star}}{C_{\ell}} T\Big)\Big)
 \big( {\tau^{2 \ell+2}} + (\nu + h) h^{2k}\big) \, ,
\end{aligned}
\end{equation}
with hidden constant that depends only on $k$, $\ell$, $\rho$, $\eta$, and on the solution~$\u$ (evaluated in the norms of the regularity assumption above).
\end{theorem}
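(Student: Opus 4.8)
The plan is to mirror the two–test–function argument in the proof of Theorem~\ref{prop:final}, the only structural novelty being that the convective coefficient is now the extrapolated field $\uhtt$, so that the consistency bound for the convection term must be taken from Lemma~\ref{lemma:cons-conv-new} instead of Lemma~\ref{lemma:cons-conv}. Since $\uht\in\Zht$ implies $\uhtt\in\Zht$, the coercivity identity~\eqref{eq:identity-cht} still applies with coefficient $\uhtt$, the forms $\mht(\cdot,\cdot)$ and $\aht(\cdot,\cdot)$ are untouched, and the error equation~\eqref{eq:discrete-error-equation-simplified} holds verbatim after replacing $\Bht(\uht;\cdot,\cdot)$ and $\cht(\uht;\Piht\u,\cdot)$ by $\Bht(\uhtt;\cdot,\cdot)$ and $\cht(\uhtt;\Piht\u,\cdot)$. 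The time–derivative and diffusion consistency estimates (Lemmas~\ref{lemma:cons-time} and~\ref{lemma:cons-diff}) are thus reused unchanged. Throughout I abbreviate $\alpha_m:=\Norm{\Piht\eu}{L^\infty(I_m;L^2(\Omega)^d)}^2$.

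First I would take the test function~\eqref{eq:test-function-error} supported on $Q_1,\ldots,Q_n$ and repeat the computation leading to~\eqref{finals:1}--\eqref{finals:3}, now invoking Lemma~\ref{lemma:cons-conv-new} for the convective part. The sole difference from~\eqref{finals:3} is that the last (Gr\"onwall–type) addendum becomes $C_{\u}^{\star}\sum_{m=1}^{n}\tau_m\Norm{\Piht\eu}{L^\infty(\Inmo\cup\In;L^2(\Omega)^d)}^2$, i.e. each slab now couples to its predecessor through the $\Inmo\cup\In$ norm. Next I would use the ``super'' test function~\eqref{eq:super-test-function-error} on $Q_n$ and follow Proposition~\ref{prop:continuous-dependence} to obtain the lower bound~\eqref{finals:4}; bounding the right–hand side again by Lemmas~\ref{lemma:cons-time}, \ref{lemma:cons-diff} and~\ref{lemma:cons-conv-new} produces the analogue of~\eqref{finals:5}, whose convective contribution is $C_{\u}^{\star}\tau_n(\alpha_{n-1}+\alpha_n)$. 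Absorbing the self–referential term $C_{\u}^{\star}\tau_n\alpha_n$ into $C_\ell\widehat E_n\ge C_\ell\alpha_n$ requires $\tau_n\le C_\ell/(2C_{\u}^{\star})$ and yields the semi–implicit analogue of~\eqref{finals:7}, namely
\begin{equation*}
\alpha_n\lesssim {\cal R}_{\u}\big(\tau_n^{2\ell+3}+\tau_n(\nu+h)h^{2k}\big)+\tfrac{1}{2C_\ell}\Norm{\Piht\eu(\cdot,t_{n-1}^-)}{L^2(\Omega)^d}^2+\tfrac{2C_{\u}^{\star}}{C_\ell}\tau_n\,\alpha_{n-1},
\end{equation*}
the extra addendum $\tfrac{2C_{\u}^{\star}}{C_\ell}\tau_n\alpha_{n-1}$ being the price of the explicit treatment (for $n=1$ this term is absent, the first slab being solved with the fully implicit convection as in Lemma~\ref{lemma:cons-conv}).

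The decisive step, and the main obstacle, is to close the argument despite the coupling to the previous slab. Here I would exploit the mesh–regularity ratio $\eta=\max_n\tau_n/\tau_{n-1}$ through the reindexing identity $\sum_{m}\tau_m\alpha_{m-1}=\sum_{m'}\tau_{m'+1}\alpha_{m'}\le\eta\sum_{m'}\tau_{m'}\alpha_{m'}$. Imposing the stated restriction $\tau_n\le C_\ell/(4\eta C_{\u}^{\star})$ makes the coefficient of $\alpha_{n-1}$ in the displayed inequality at most $1/(2\eta)$; summing against $\tau_m$ and reindexing then lets me absorb $\tfrac12\sum_m\tau_m\alpha_m$ on the left and bound $\sum_m\tau_m\alpha_m$ (hence also $\sum_m\tau_m\alpha_{m-1}\le\eta\sum_m\tau_m\alpha_m$) by the data plus $\tfrac{1}{C_\ell}\sum_m\tau_m E_{m-1}$, using $\Norm{\Piht\eu(\cdot,t_{m-1}^-)}{L^2(\Omega)^d}^2\le 2E_{m-1}$ from~\eqref{finals:E1}. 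Substituting these into the first–test–function bound~\eqref{finals:3}, the convective feedback becomes $\lesssim \eta C_{\u}^{\star}C_\ell^{-1}\sum_m\tau_m E_{m-1}$ (bounding $1+\eta\le 2\eta$), and a discrete Gr\"onwall inequality produces the factor $\exp(\eta C_{\u}^{\star}T/C_\ell)$ and the asserted right–hand side. The regularity aggregation into ${\cal R}_{\u}$, the treatment of the $n=1$ initial–data term, and the final passage from $\Piht\eu$ to the composite error estimate all proceed exactly as in Theorem~\ref{prop:final} and Corollary~\ref{corol:final}.
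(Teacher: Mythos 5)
Your proposal is correct and follows essentially the same route as the paper's proof: the same two test functions \eqref{eq:test-function-error} and \eqref{eq:super-test-function-error}, Lemma~\ref{lemma:cons-conv-new} in place of Lemma~\ref{lemma:cons-conv} for the convective consistency, the $\eta$-reindexing of the $I_{m-1}\cup I_m$ coupling, the restriction $\tau_n\le C_\ell/(4\eta C_{\u}^{\star})$ to absorb the self-referential terms, and a discrete Gr\"onwall inequality, with the $n=1$ slab treated as in the fully implicit case. The only (immaterial) difference is bookkeeping: you absorb the $\Norm{\Piht \eu}{L^\infty(I_n;L^2(\Omega)^d)}^2$ term slab-by-slab and then sum and reindex, whereas the paper first multiplies the per-slab bound by $\tau_m$, sums, and absorbs the resulting term $2\eta C_{\u}^{\star}C_{\ell}^{-1}\sum_{m}\tau_m^2\Norm{\Piht \eu}{L^\infty(I_m;L^2(\Omega)^d)}^2$ globally under the same step restriction.
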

\begin{proof}
The proof follows closely the one for Theorem~\ref{prop:final}, so we focus on the main differences due to the semi-implicit treatment of the nonlinear term. 
In particular, the new convection term (which is the only difference between the two schemes) is still antisymmetric with respect to the last two entries, so the same coercivity arguments still apply.
In the first time slab~($n = 1$), the two methods coincide; therefore, we omit the details for such a case and concentrate on the subsequent time slabs~($n > 1$).

The test function~$\vht \in \Zht$ defined in~\eqref{eq:test-function-error} can be used to show that
\begin{equation}
\label{eq:new-S1}
\Bht(\uhtt; \Piht \eu, \vht) \geq \widetilde{E}_n,
\end{equation}
with
\begin{equation*}
\begin{aligned}
\widetilde{E}_n & :=  
{\frac12} \Norm{\Piht \eu(\cdot, \tn^{-})}{L^2(\Omega)^d}^2  + \nu \sum_{m = 1}^n \int_{I_m} \Norm{\Piht \eu(\cdot, t) }{\calA, h}^2 \dt  + {\frac12} \sum_{m = 1}^{n - 1} \Norm{\jump{\Piht \eu}_{m}}{L^2(\Omega)^d}^2 \\
& \quad + \frac14 \Norm{\Piht\eu(\cdot, 0)}{L^2(\Omega)^d}^2
 + {\frac12} \sum_{m = 1}^{n} \sum_{F \in \FhI} \QtRm\Big( \int_F  \gamma_F(\uhtt) \, |\jump{\Piht \eu}|^2 \dS \Big) \, .
\end{aligned}
\end{equation*}
Moreover, for all~$\varepsilon > 0$, the following bound can be obtained as in~\eqref{finals:2}, by estimating the convective term with Lemma~\ref{lemma:cons-conv-new} instead of Lemma~\ref{lemma:cons-conv}:
\begin{alignat}{3}
\nonumber
& \Bht(\uhtt; \Piht\eu,\vht) \le 
C \varepsilon^{-1} {\cal R}_{\u} \sum_{m=1}^n \Big( {\tau_m^{2 \ell + 3}} + \tau_m (\nu+h) h^{2k} \Big) + C \varepsilon^{-1} {\cal R}_{\u} h^{2k+2} \\
\nonumber
& \ 
+ C\varepsilon \sum_{m=1}^n \Big[ \tau_m \| \Piht \eu \|_{L^\infty(I_m;L^2(\Omega)^d)}^2  \ + \nu \int_{I_m} \!\! \Norm{\Piht \eu(\cdot, t) }{\calA, h}^2
+ \!\!\!\! \sum_{F \in \FhI} \!\!\! \QtRm\Big( \int_F  \gamma_F(\uhtt) \, |\jump{\Piht \eu}|^2 \Big)
\Big] \\
\label{eq:new-S2}
& \ + C \varepsilon \Norm{\Piht \eu(\cdot, 0)}{L^2(\Omega)^d}^2 {+ C_{\u}^{\star}} \sum_{m=1}^n \tau_m \| \Piht \eu \|_{L^\infty(I_{m-1} \cup I_m; L^2(\Omega)^d)}^2\, ,
\end{alignat}
where~$C_{\u}^{\star}$ is as in Lemma~\ref{lemma:cons-conv-new}, and the generic constant~$C$ depends only on the degrees of approximation~$k$ and~$\ell$, the shape-regularity parameter~$\rho$, and~$\eta$.

The last term in~\eqref{eq:new-S2} can be further bounded as
\begin{equation}\label{eq:boundIm}
C_{\u}^{\star} \sum_{m = 1}^n \tau_m \Norm{\Piht \eu}{L^{\infty}(I_{m-1} \cup I_m; L^2(\Omega)^d)}^2 \le 2 \eta C_{\u}^{\star} \sum_{m = 1}^n \tau_m \Norm{\Piht \eu}{L^{\infty}(I_m; L^2(\Omega)^d)}^2,
\end{equation}
which, combined with~\eqref{eq:new-S1} and~\eqref{eq:new-S2}, leads to
\begin{equation}
\label{eq:new-S3}
\widetilde{E}_n \le C \Ru \sum_{m = 1}^n \big(\tau_m^{2 \ell + 3} + \tau_m (\nu + h) h^{2k} \big) + C \mathcal{R}_u h^{2k + 2} + 2 \eta C_{\u}^{\star} \sum_{m = 1}^n \tau_m \Norm{\Piht \eu}{L^{\infty}(I_m; L^2(\Omega)^d)}^2,
\end{equation}
for a sufficiently small~$\varepsilon$ (but depending only on~$k$, $\ell$, $\rho$, and~$\eta$).

Using now the test function~$\vht^{(n)}$ in~\eqref{eq:super-test-function-error} and proceeding as for~\eqref{finals:4} and~\eqref{finals:5}, for~$m = 2, \ldots, N$ and all~$\varepsilon > 0$, we get
\begin{alignat}{3}
\nonumber
C_{\ell} \widehat{\widetilde{E}}_m & - \frac14 \Norm{\Piht \eu(\cdot, t_{m-1}^-)}{L^2(\Omega)^d}^2 \le 
\Bht(\uhtt; \Piht\eu,\vht^{(m)}) \\
\nonumber
&  \le
C \varepsilon^{-1} {\cal R}_{\u} \Big( {\tau_m^{2 \ell + 3}} + \tau_m (\nu + h) h^{2k} \Big) \\
\nonumber
& \quad 
+ \varepsilon \Big[ \tau_m \| \Piht \eu \|_{L^\infty(I_m; L^2(\Omega)^d)}^2  + \nu \int_{I_m} \!\! \Norm{\Piht \eu(\cdot, t) }{\calA, h}^2 
+ \!\!\!\! \sum_{F \in \FhI} \!\!\! \QtRm\Big( \int_F  \gamma_F(\uhtt) \, |\jump{\Piht \eu}|^2 \Big)
\Big] \\
\label{eq:new-S4}
& \quad + {C_{\u}^{\star} }\tau_m \| \Piht \eu \|_{L^\infty(I_m \cup I_{m-1}; L^2(\Omega)^d)}^2,
\end{alignat}
where
\begin{alignat*}{3}
\widehat{\widetilde{E}}_m := &  
\| \Piht \eu \|_{L^\infty(I_m;L^2(\Omega)^d)}^2
+ \nu \int_{I_m} \Norm{\Piht \eu(\cdot, t) }{\calA, h}^2 \dt  \\
& + \Norm{\jump{\Piht \eu}_{m-1}}{L^2(\Omega)^d}^2 
+ \sum_{F \in \FhI} \QtRm\Big( \int_F  \gamma_F(\uhtt) \, |\jump{\Piht \eu}|^2 \dS \Big) \, .
\end{alignat*}
Multiplying~\eqref{eq:new-S4} by~$\tau_m$ and summing up from~$m = 1$ to~$n$, for a sufficiently small~$\varepsilon$ and by the same observation as in~\eqref{eq:boundIm}, we obtain
\begin{alignat*}{3}
& \sum_{m = 1}^n \tau_m \Norm{\Piht \eu}{L^{\infty}(I_m; L^2(\Omega)^d}^2 \le \sum_{m = 1}^n \tau_m \widehat{\widetilde{E}}_m \le C \Ru \sum_{m = 1}^n \tau_m \big(\tau_m^{2 \ell + 3} + \tau_m (\nu + h) h^{2k} \big) \\
& \qquad + \frac{1}{4C_{\ell}} \sum_{m = 1}^{n} \tau_m \Norm{\Piht \eu(\cdot, t_{m-1}^-)}{L^2(\Omega)^d)}^2 
+ 2 \eta \frac{C_{\u}^{\star}}{C_{\ell}}\sum_{m = 1}^n \tau_m^2 \Norm{\Piht \eu}{L^{\infty}(I_m; L^2(\Omega)^d)}^2.
\end{alignat*}
Since~$2\tau \eta C_{\u}^{\star}/ C_{\ell} \le 1/2$, we have
\begin{equation*}
\sum_{m = 1}^n \tau_m \Norm{\Piht \eu}{L^{\infty}(I_m; L^2(\Omega)^d)}^2 \le 2 C \Ru \sum_{m = 1}^n \tau_m \big(\tau_m^{2\ell + 3} + \tau_m (\nu + h) h^{2k} \big) + \frac{1}{2 C_{\ell}} \sum_{m = 1}^n \tau_m \Norm{\Piht \eu(\cdot, t_{m - 1}^-)}{L^2(\Omega)^d}^2.
\end{equation*}
Inserting the above bound in~\eqref{eq:new-S3}, it follows:
\begin{equation*}
\widetilde{E}_n \le C \Ru \sum_{m = 1}^n \big(\tau_m^{2\ell + 3} + \tau_m (\nu + h) h^{2k} \big) + C \Ru h^{2k + 2} + \frac{\eta C_{\u}^{\star} }{C_{\ell}} \sum_{m = 1}^n \tau_m \Norm{\Piht \eu(\cdot, t_{m -1}^-)}{L^2(\Omega)^d}^2
\end{equation*}
Since~$\Norm{\Piht \eu (\cdot, t_{m - 1}^-)}{L^2(\Omega)^d}^2 \le 2 \widetilde{E}_{m-1}$, estimate~\eqref{bound:final-semi-implicit} then follows by using the discrete Gr\"onwall inequality. 
\end{proof}

From Theorem \ref{prop:final-semi-implicit}, an identical counterpart of Corollary \ref{corol:final} can be immediately derived.
\begin{corollary}[\emph{A priori} error estimate]\label{corol:final:2}
Under the same assumptions as in Theorem \ref{prop:final-semi-implicit}, it holds 
\begin{equation}
\begin{aligned}
\| \eu \|_{L^\infty(0,T;L^2(\Omega)^d)}^2 & + \nu \sum_{n = 1}^N \int_{I_n} \Norm{\eu(\cdot, t) }{\calA, h}^2 \dt 
+ \sum_{n = 1}^{N} \sum_{F \in \FhI} \QtRn\Big( \int_F  \gamma_F(\uhtt) \, |\jump{\eu}|^2 \dS \Big) \\
& \lesssim \, \sum_{n=1}^N \Big( {\tau_n^{2 \ell + 3}} + \tau_n (\nu+h) h^{2k} \Big) + h^{2k+2} 
\, \lesssim \, {\tau^{2 \ell+2}} + (\nu + h) h^{2k} \, .
\label{eqn:finalEst}
\end{aligned}
\end{equation}
\end{corollary}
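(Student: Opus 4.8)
The plan is to deduce the bound for the full error $\eu = \u - \uht$ from the already-established bound for the discrete error $\Piht \eu$ in Theorem~\ref{prop:final-semi-implicit} by inserting the splitting $\eu = \epi + \Piht \eu$, with $\epi := \u - \Piht \u$, and by controlling the projection error $\epi$ through the approximation properties of $\Piht$. Since the three quantities on the left-hand side of the claimed estimate are squares of seminorms, I would first apply the elementary inequality $\Norm{a+b}{}^2 \le 2\Norm{a}{}^2 + 2\Norm{b}{}^2$ termwise: each contribution coming from $\Piht \eu$ is then absorbed directly by~\eqref{bound:final-semi-implicit}, while each contribution coming from $\epi$ is estimated separately by approximation theory.

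For the first two terms this is routine. The $L^{\infty}(0,T;L^2(\Omega)^d)$ contribution of $\epi$ is bounded by Lemma~\ref{lemma:estimate-Piht} with $s=0$, giving $\Norm{\epi}{L^{\infty}(0,T;L^2(\Omega)^d)}^2 \lesssim h^{2k+2} + \tau^{2\ell+2}$. For the diffusive term I would use $\Norm{\cdot}{\calA, h} \le \Norm{\cdot}{\calA_*, h}$ together with the estimates for $\Theta_1, \Theta_2, \Theta_3$ already derived inside the proof of Lemma~\ref{lemma:cons-diff}, which yield $\int_{I_n} \Norm{\epi}{\calA, h}^2 \dt \lesssim \Ru (\tau_n^{2\ell+3} + \tau_n h^{2k})$; multiplying by $\nu$ and summing over $n$ gives $\nu \sum_{n} \int_{I_n} \Norm{\epi}{\calA, h}^2 \dt \lesssim \nu(\tau^{2\ell+2} + h^{2k})$, consistent with the claimed right-hand side once $\nu \lesssim 1$ is used.

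The only term requiring more care is the upwind jump term. Since $\u$ is continuous across facets, $\jump{\epi} = -\jump{\Piht \u} = -\jump{\Pt(\Id - \IRT)\u}$, which is exactly the quantity already controlled in the treatment of $J_5^{(n)}$ (and in particular of $J_1'$ and $J_2'$) inside the proof of Lemma~\ref{lemma:cons-conv-new}. Bounding the Gauss--Radau weights by $\tau_n$, invoking the estimates~\eqref{eq:case-T1}--\eqref{eq:case-T2}, and using that $\Norm{\uht}{L^{\infty}(I_n;L^2(\Omega)^d)}$ is bounded uniformly in $h$ and $\tau$ by Theorem~\ref{thm:existence-discrete-solutions-semi-implicit}, I would obtain $\sum_{n} \sum_{F \in \FhI} \QtRn\big(\int_F \gamma_F(\uhtt)\,|\jump{\epi}|^2 \dS\big) \lesssim h^{2k+1}\,\Ru$, which fits into $(\nu+h)h^{2k}$. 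Collecting the three estimates, and using $\tau_n \le \tau$, $\sum_n \tau_n = T$, and $h^{2k+2} \le (\nu+h)h^{2k}$ for $h \le 1$, produces the first inequality; the second follows by the same telescoping already carried out in Corollary~\ref{corol:final}.

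I expect no genuine obstacle, as the heavy lifting is done by Theorem~\ref{prop:final-semi-implicit}; the one point to watch is that the $\gamma_F$-weighted jump contribution of $\epi$ be estimated with the \emph{same} extrapolated weight $\gamma_F(\uhtt)$ that appears on the left-hand side, which is precisely the form handled in Lemma~\ref{lemma:cons-conv-new}. Finally, as remarked after Corollary~\ref{corol:final}, the time-jump seminorm $\SemiNorm{\Piht \eu}{\sf J}$ is deliberately omitted from the statement, since retaining it would introduce an additional $\sum_{n} \tau_n^{-1} h^{2k+2}$ term through the temporal-interface interpolation error.
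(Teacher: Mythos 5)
Your proposal is correct and follows essentially the same route as the paper: the paper proves this corollary exactly as its fully implicit counterpart (Corollary~\ref{corol:final}), namely by a triangle inequality splitting $\eu = \epi + \Piht\eu$, absorbing the discrete part via Theorem~\ref{prop:final-semi-implicit}, and bounding the projection part $\epi$ with the approximation estimates for $\Piht$ (reusing, for the upwind jump term, the treatment of $\jump{\Pt(\Id-\IRT)\u}$ with the weight $\gamma_F(\uhtt)$ already carried out in Lemma~\ref{lemma:cons-conv-new}). Your closing remark on the deliberate omission of the time-jump seminorm, which would otherwise introduce a term of the type $\sum_{n}\tau_n^{-1}h^{2k+2}$, matches the paper's own observation.
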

Analogously, the observations in Remark \ref{remark:final} apply also to this scheme.
}

\section{Numerical experiments\label{sec:num}}

\red{In this section, we will provide numerical evidence of 
our theoretical results for both the fully implicit (\texttt{full-impl}, Section~\ref{subsec:fully-discrete-formulation}) and 
the semi-implicit (\texttt{semi-impl},  Section~\ref{sec:semi-implicit}) schemes.}
Specifically, in Section~\ref{sec:numConv}, we verify the convergence properties of the proposed schemes in both
the diffusion- and the convection-dominated regimes.
Then, in Section~\ref{sec:convT}, 
we focus on the convergence in time, and  
numerically assess the independence of the velocity error 
from negative powers of the viscosity~$\nu$ (Reynolds-semi-robustness) and from the pressure~$p$ (pressure-robustness).

Since we will mostly refer to the error estimate~\eqref{eqn:finalEst}, 
recalling that~$\eu = \u - \uht$, we define the squared error
$$
\errU^2:=\| \eu \|_{L^\infty(0,T;L^2(\Omega)^d)}^2 + \nu \sum_{n = 1}^N \int_{I_n} \Norm{\eu(\cdot, t) }{\calA, h}^2 \dt 
+ \sum_{n = 1}^{N} \sum_{F \in \FhI} \QtRn\Big( \int_F  \gamma_F(\uht) \, |\jump{\eu}|^2 \dS \Big)\,.
$$

The time step~$\tau$ and the spatial meshes used 
are indicated in each section.
Specifically, for the space discretization,
we construct four Delaunay triangular meshes of the unit square~$\Omega=(0,\,1)^2$
with decreasing mesh size~$h$ using the package~\texttt{triangle}~\cite{Shewchuk:1996:TEA}.
We denote such meshes as \texttt{mesh1}, \texttt{mesh2}, \texttt{mesh3}, and \texttt{mesh4}
from the coarsest to the finest.
For the time discretization, we always consider the time interval~$[0,\,1]$
and use uniform time steps. 
More precisely, we construct four time partitions 
\texttt{inter1}, \texttt{inter2}, \texttt{inter3}, and \texttt{inter4},
corresponding to time steps~$\tau$ equal to~$1/3$, $1/6$, $1/12$, and~$1/24$, respectively.
We recall that no constraints relating~$\tau$ and~$h$ are needed in our analysis.

We set the same approximation degree for both space and time, denoted by~$k$.
If not explicitly specified in the section,
we compute the errors 
using the following space--time discretizations:
$$
(\texttt{mesh1},\texttt{inter1}),\qquad(\texttt{mesh2},\texttt{inter2}),\qquad (\texttt{mesh3},\texttt{inter3}),\qquad\text{and}\qquad(\texttt{mesh4},\texttt{inter4}).
$$

Moreover, we set the penalty parameter~$\sigma = 10k^2$ in~\eqref{def:aht}. 
The nonlinear systems stemming from~\eqref{eq:space-time-formulation} are solved using a simple fixed-point iteration with tolerance~$10^{-8}$, which uses the approximation from the previous fixed-point iteration in the convective term.

The proposed 
space--time method has been implemented taking the steps from the \texttt{C++} library \texttt{Vem++}~\cite{Dassi:2023:VAC}.

\subsection{Convergence test}\label{sec:numConv}

In this section, we validate the 
properties of~$\errU$ in the diffusion- and convection-dominated regimes, according to Corollaries \ref{corol:final} \red{ and \ref{corol:final:2},
as well as Remark~\ref{remark:final} and the analogous of Section~\ref{sec:semi-implicit}}.
We consider the IBVP defined in~\eqref{eq:model-problem}, 
and choose the right-hand side and the initial condition 
so that the exact solution is given by
\begin{equation}
\label{sol1}
\begin{split}
\mathbf{u}(x, y, t)&=\begin{bmatrix}
          -0.5\cos(t)\cos^2(\pi(x-0.5))\cos(\pi(y-0.5))\sin(\pi(y-0.5))\\
\phantom{-}0.5\cos(t)\cos^2(\pi(y-0.5))\cos(\pi(x-0.5))\sin(\pi(x-0.5))
\end{bmatrix}\,,\\[0.5em]
p(x,y,t) & = \cos(t)(\sin(\pi(x-0.5))-\sin(\pi(y-0.5)))\,.
\end{split}
\end{equation}
We will 
choose~$\nu$ depending on which aspect of~$\errU$ we aim to verify numerically.

First, we will show that the estimate is quasi-optimal with respect to both~$\tau$ and~$h$ in a diffusion-dominated regime, and 
that it exhibits an additional~$h^{1/2}$ pre-asymptotic error reduction rate in convection-dominated cases.
To simulate a diffusion-dominated regime, 
we fix $\nu=1$, while~$\nu=10^{-5}$ is used to obtain a convection-dominated problem.
\red{In Figure~\ref{fig:diffConvDom}(left panel), 
we report the values of~$\errU$ for different space--time meshes, degrees~$k$, and values of~$\nu$ for both
the fully implicit and the semi-implicit schemes.}

\begin{figure}[!htb]
\centering
\begin{tabular}{cc}
\includegraphics[width=0.45\textwidth]{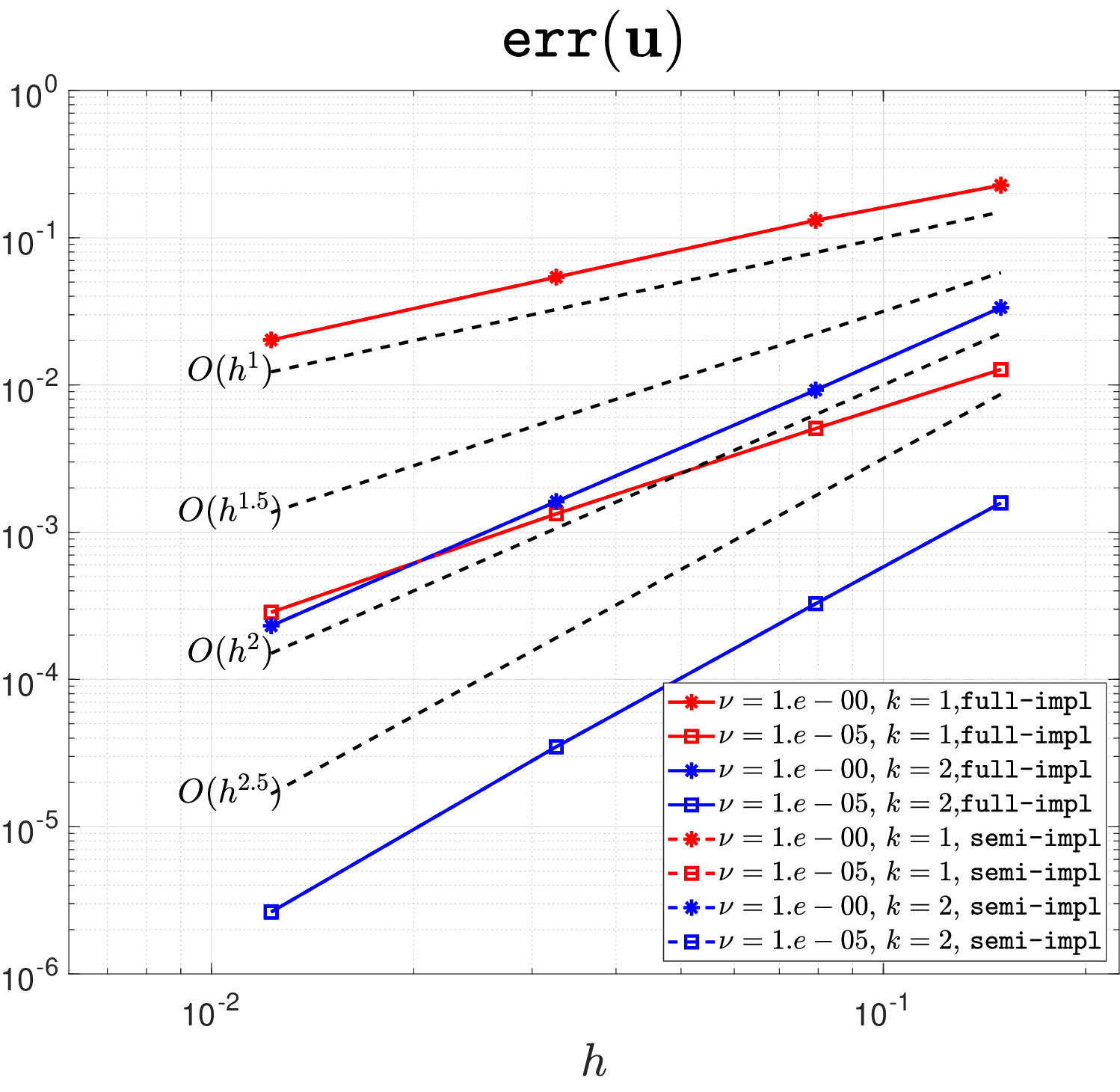} &
\includegraphics[width=0.45\textwidth]{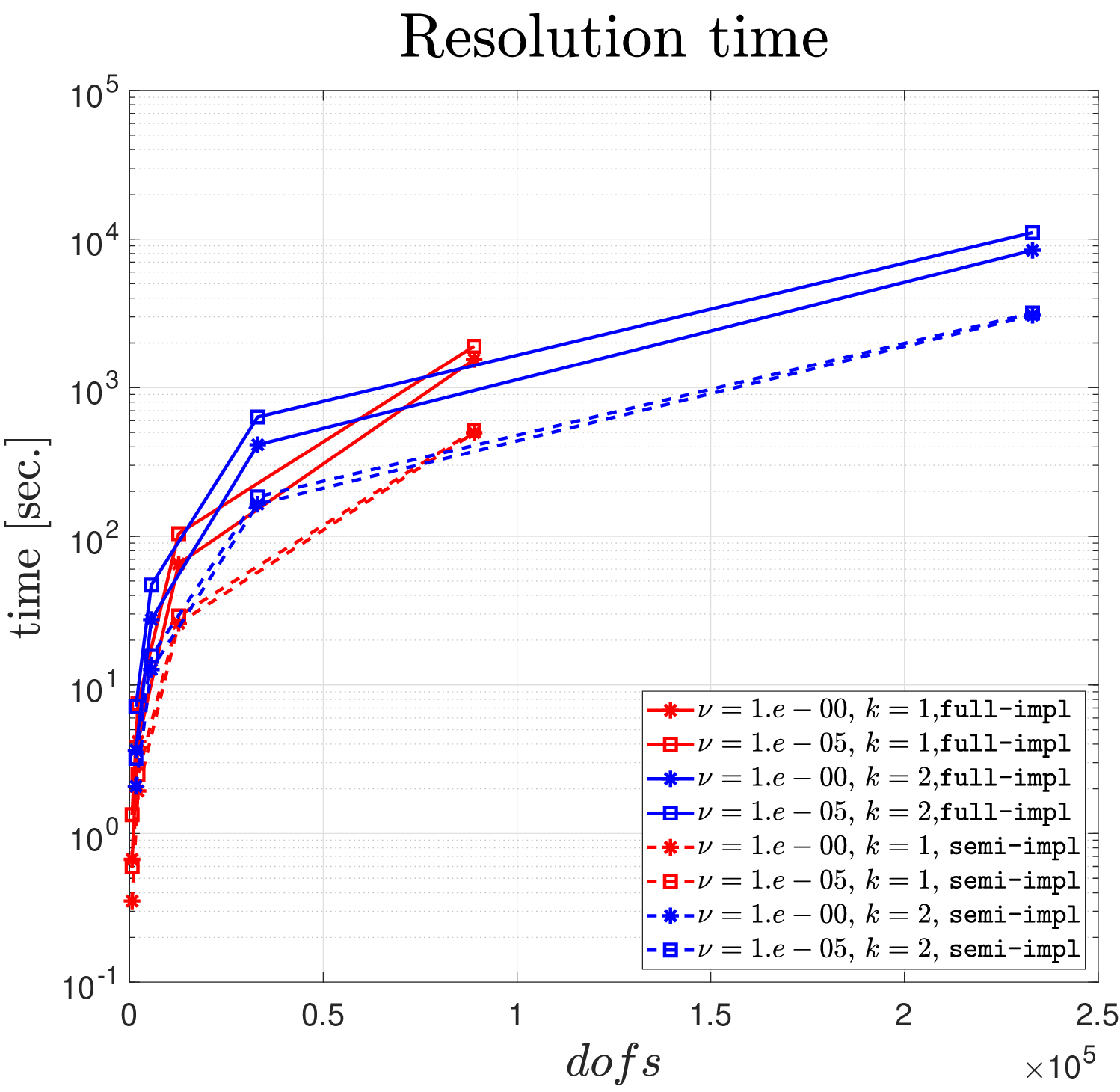}
\end{tabular}
\caption{\red{Convergence rates of the fully implicit and the semi-implicit schemes for the error~$\errU$ varying~$h$, 
considering different values of~$\nu$ and approximations of degree~$k=1$ and~$k = 2$, 
for the problem with exact solution in~\eqref{sol1}, left.
Resolution time in seconds for each simulation, right.}}
\label{fig:diffConvDom}
\end{figure}

We observe the expected convergence rates in all cases.
In particular, as stated in Corollaries~\ref{corol:final} \red{and \ref{corol:final:2}}, 
we observe an additional~$h^{1/2}$ gain in the convergence rate in the convection-dominated problems,
\red{i.e., when $\nu=10^{-5}$.
Moreover, the convergence lines of the \texttt{full-impl} and \texttt{semi-impl} schemes almost overlap,
indicating that there is no significant difference between these two approaches in terms of accuracy.
However, as it is shown in Figure~\ref{fig:diffConvDom}(right panel), 
the \texttt{semi-impl} method yields a substantial improvement in computation time. 
This is due to the fact that fixed-point iterations are performed only during the first time step, whereas in all subsequent steps, only a single linear system needs to be solved.}

\begin{figure}[!htb]
\centering
\includegraphics[width=0.8\textwidth]{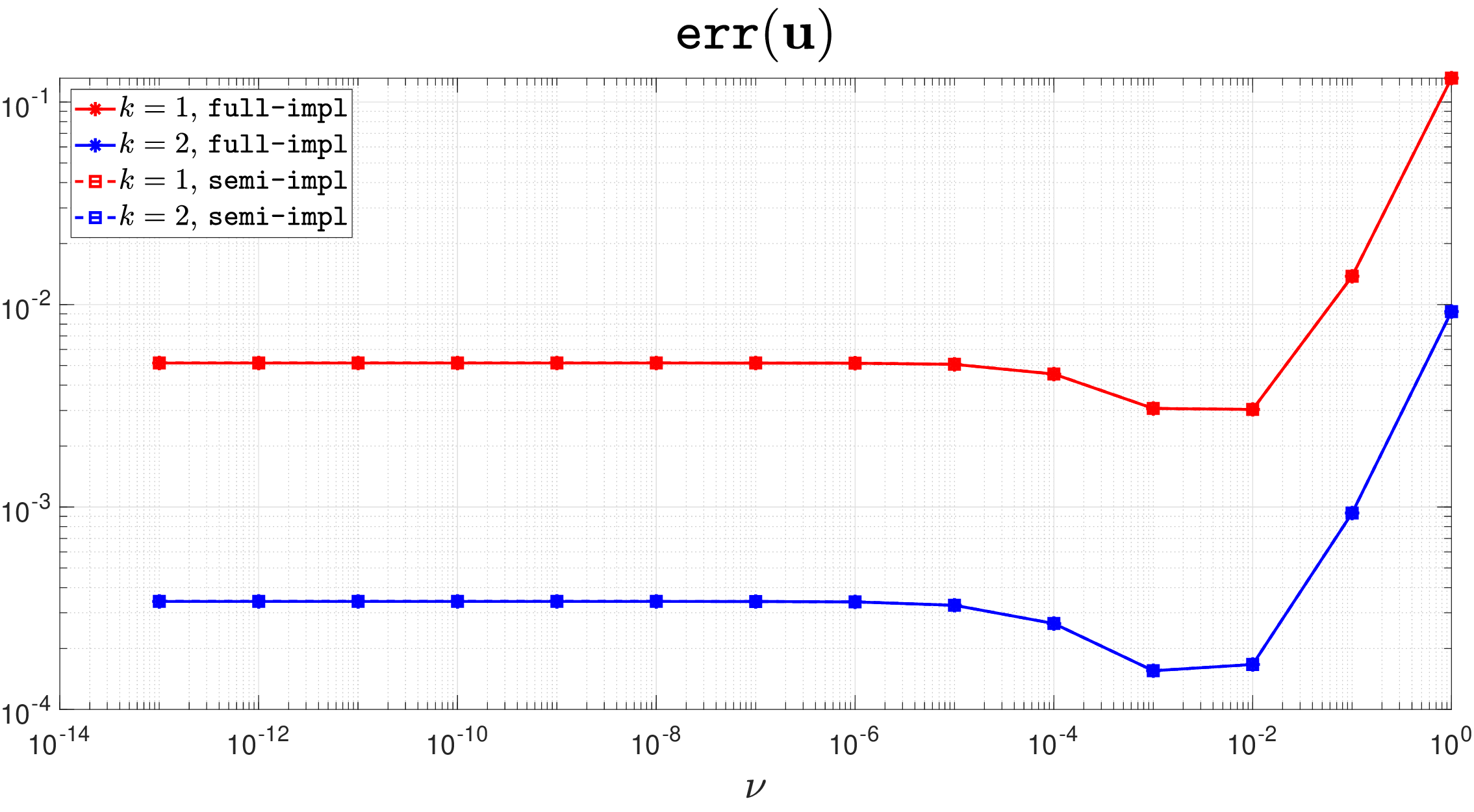}
\caption{\red{Behavior of the error~$\errU$ for the fully implicit and the semi-implicit schemes,
considering different values of~$\nu$ and approximations of degree~$k=1$ and~$k = 2$, 
for the problem with exact solution in~\eqref{sol1}.}}
\label{fig:nuStab}
\end{figure}

Next, we show that $\errU$ is Reynolds semi-robust 
meaning that the associated error constant remains bounded as~$\nu\to 0$.
To validate this numerically, 
we fix a space--time mesh and  solve the problem~\eqref{eq:model-problem} for different values of~$\nu$. 
In Figure~\ref{fig:nuStab}, we show the behavior of the error~$\errU$ for~$k=1$ and~$k = 2$, varying~$\nu$, and
using the space--time discretization~$(\texttt{mesh2},\texttt{inter2})$ \red{for both schemes}.
As expected, for~$\nu < 10^{-2}$, \red{these curves} become nearly horizontal,
thus providing numerical evidence that~$\errU$ is not affected by~$\nu$, 
in particular, as~$\nu\to 0$.
\red{Moreover, also in this case, the lines associated with the \texttt{full-impl} and \texttt{semi-impl} 
schemes are nearly indistinguishable, confirming that the accuracy of the two methods is effectively the same.}

Although not covered by our theory, in Figure~\ref{fig:linf}, 
we also observe (almost) optimal convergence rates for the 
error~$\| \eu \|_{L^\infty(0,T;L^2(\Omega)^d)}$ and for the~$L^2(\Omega)$-error of the pressure at the final time,
which decrease as~$\mathcal{O}(h^{k+1})$ and~$\mathcal{O}(h^k)$, respectively,
\red{for both the fully implicit and the semi-implicit schemes.}
Deriving error estimates for the pressure in strong norms is a nontrivial task and out of the scope of this manuscript. 

\begin{figure}[!htb]
\centering
\begin{tabular}{cc}
\includegraphics[width=0.47\textwidth]{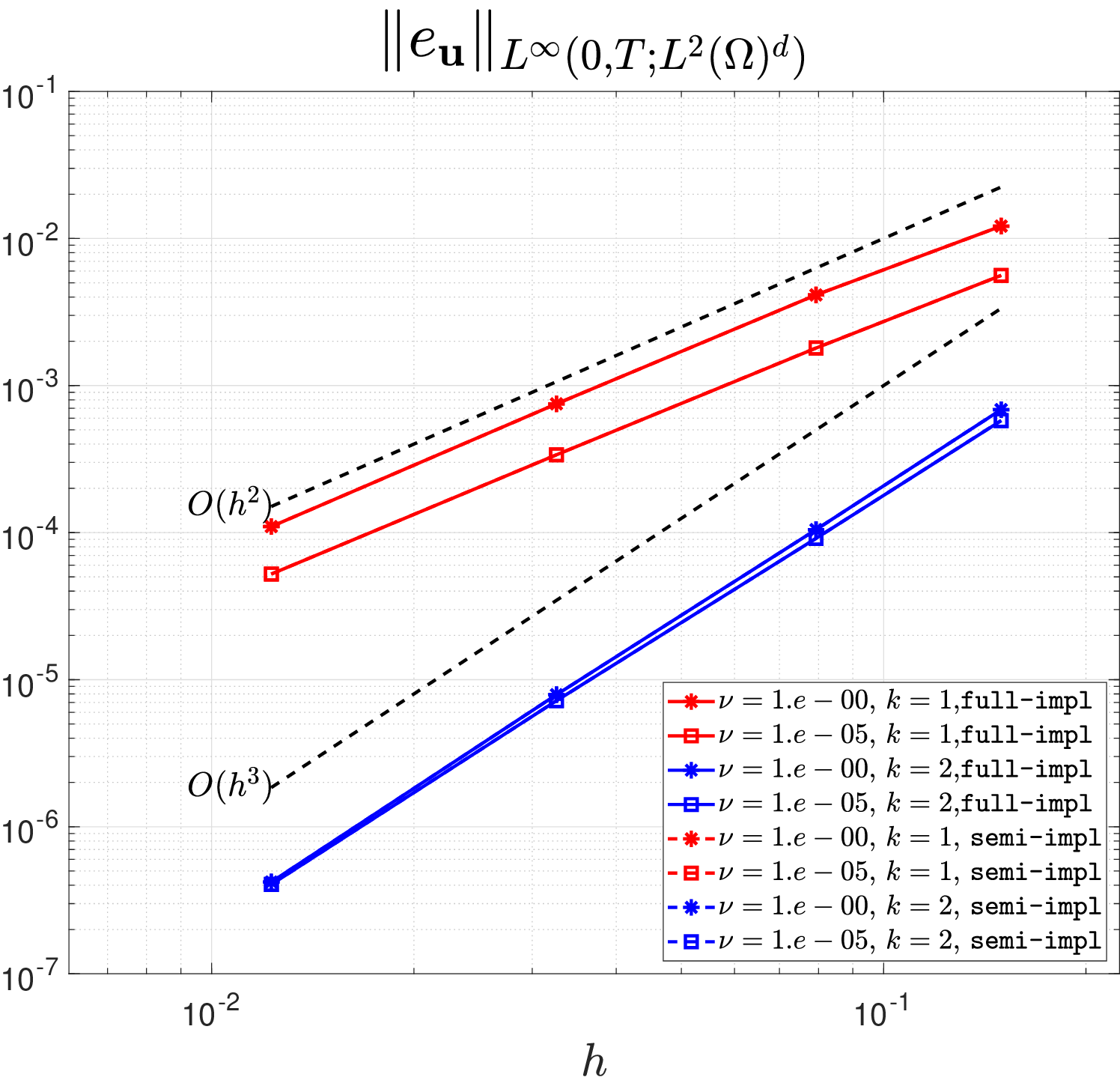} \hspace{0.2in}
\includegraphics[width=0.47\textwidth]{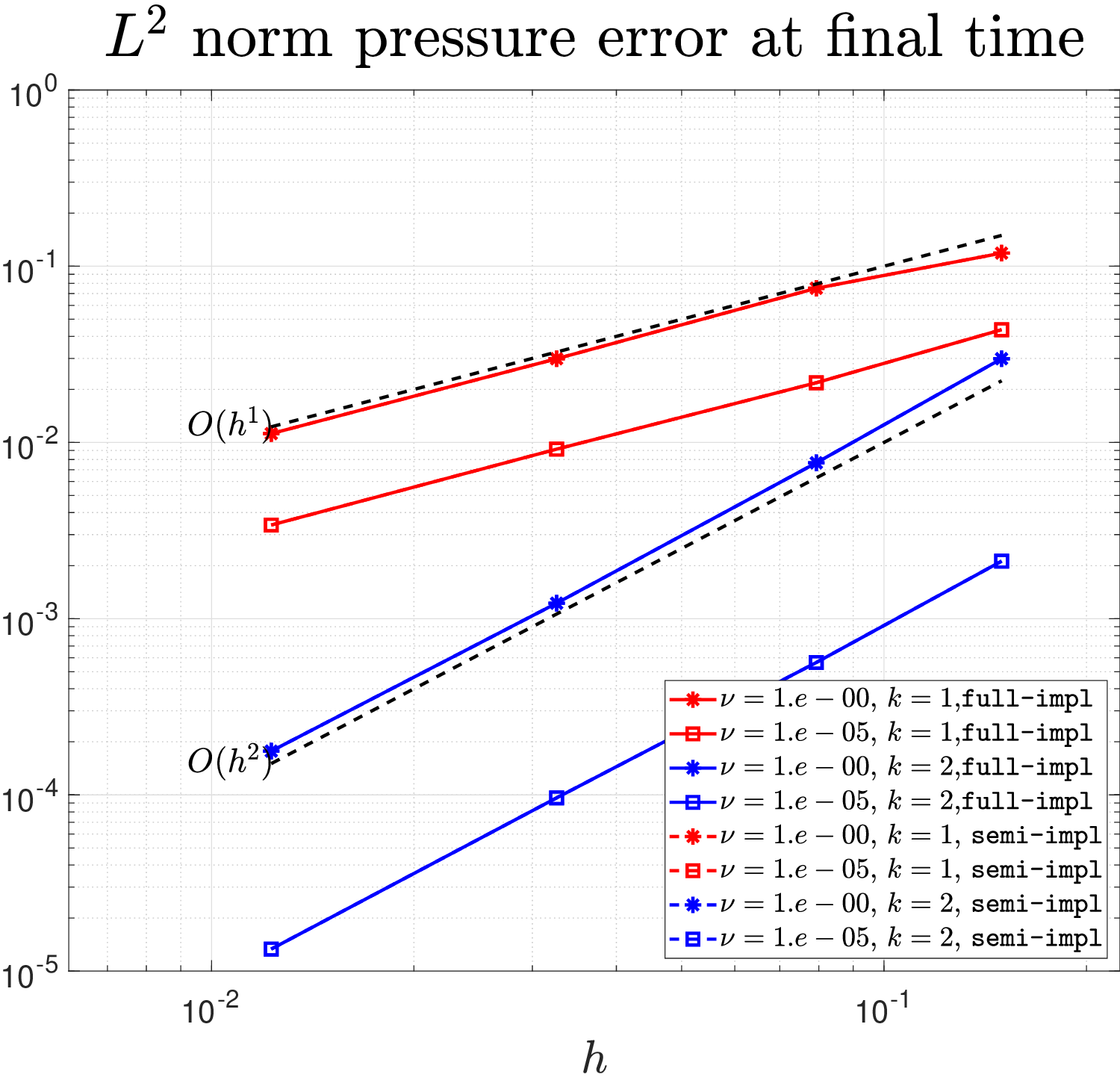}
\end{tabular}
\caption{\red{Behavior of the error~$\| \eu \|_{L^\infty(0,T;L^2(\Omega)^d)}$ (left) 
and of the $L^2(\Omega)$ pressure error at the final time (right) for the fully implicit and the semi-implicit schemes,
varying~$h$, considering different values of~$\nu$ and approximations of degree~$k=1$ and~$k = 2$, 
for the problem with exact solution in~\eqref{sol1}.}}
\label{fig:linf}
\end{figure}

\subsection{Convergence test in~\texorpdfstring{$\tau$}{tau} and pressure-robustness}\label{sec:convT}

In this section, we perform a convergence analysis focused uniquely on the time discretization, 
which we also use to show  that the error bound in Corollary~\ref{corol:final} remains unaffected by 
both~$\nu$ and the~pressure~$p$.

To achieve this goal, we set a right-hand side, Dirichlet boundary conditions, and an initial condition, 
such that the exact solution is given by
\begin{equation}
\label{sol2}
\begin{split}
\mathbf{u}(x,y, t) &= \begin{bmatrix}
          \cos(2\pi t)\,y\\
          \cos(2\pi t)\,x
\end{bmatrix}\,, \qquad 
p(x,y,t)  =  \cos(2\pi t)(\sin(\pi(x-0.5))-\sin(\pi(y-0.5)))\,.
\end{split}
\end{equation}

Since the vector field~$\mathbf{u}$ is a polynomial of degree~$1$ in space, 
the contribution of the spatial discretization to the error is negligible. 
Therefore, recalling also that due to the pressure robustness the velocity error is not affected by the pressure, the error decay for the velocities should depend only on the time discretization.

In order to practically verify such observation we conduct a time convergence analysis, considering the following sequence of space--time meshes:
$$
(\texttt{mesh2},\texttt{inter1}),\qquad(\texttt{mesh2},\texttt{inter2}),\qquad (\texttt{mesh2},\texttt{inter3}),\qquad\text{and}\qquad(\texttt{mesh2},\texttt{inter4}),
$$
where note that the spatial mesh is fixed.

In Figure~\ref{fig:timeConv}, we show the behavior of~$\errU$ and~$\| \eu \|_{L^\infty(0,T;L^2(\Omega)^d)}$, where we observe 
optimal convergence rates of order~$\mathcal{O}(\tau^{k + 1})$ in both cases, 
which is in agreement with Corollary~\ref{corol:final}.
Moreover, in both cases, the errors remain unaffected by~$\nu$, i.e., 
for each degree~$k$, the convergence lines associated with the values~$\nu=1$ and $\nu=10^{-5}$ almost overlap.
\red{On the other hand, one cannot avoid noticing the high error values obtained by the semi-implicit scheme (for $k=2$) in the convection dominated case whenever the time step $\tau$ is large (note that here we purposefully tested the semi-implicit scheme also for smaller values of $\tau$ in order to show that for small time steps the method behaves correctly). Such a behavior seems to indicate an instability and is not in contrast with our theoretical results, which indeed require a sufficiently small time step for both methods (c.f. Theorems \ref{prop:final-semi-implicit} and \ref{prop:final}). Our intuition is that, while in practice the fully implicit method is more robust in this respect, the (more efficient) semi-implicit scheme pays a price in terms of sensibility: if the time step is not sufficiently small the method can really deliver bad results. Further studies on this aspect (and possible cures) may be the objective of further communications.}

\begin{figure}[!htb]
\centering
\begin{tabular}{cc}
\includegraphics[width=0.47\textwidth]{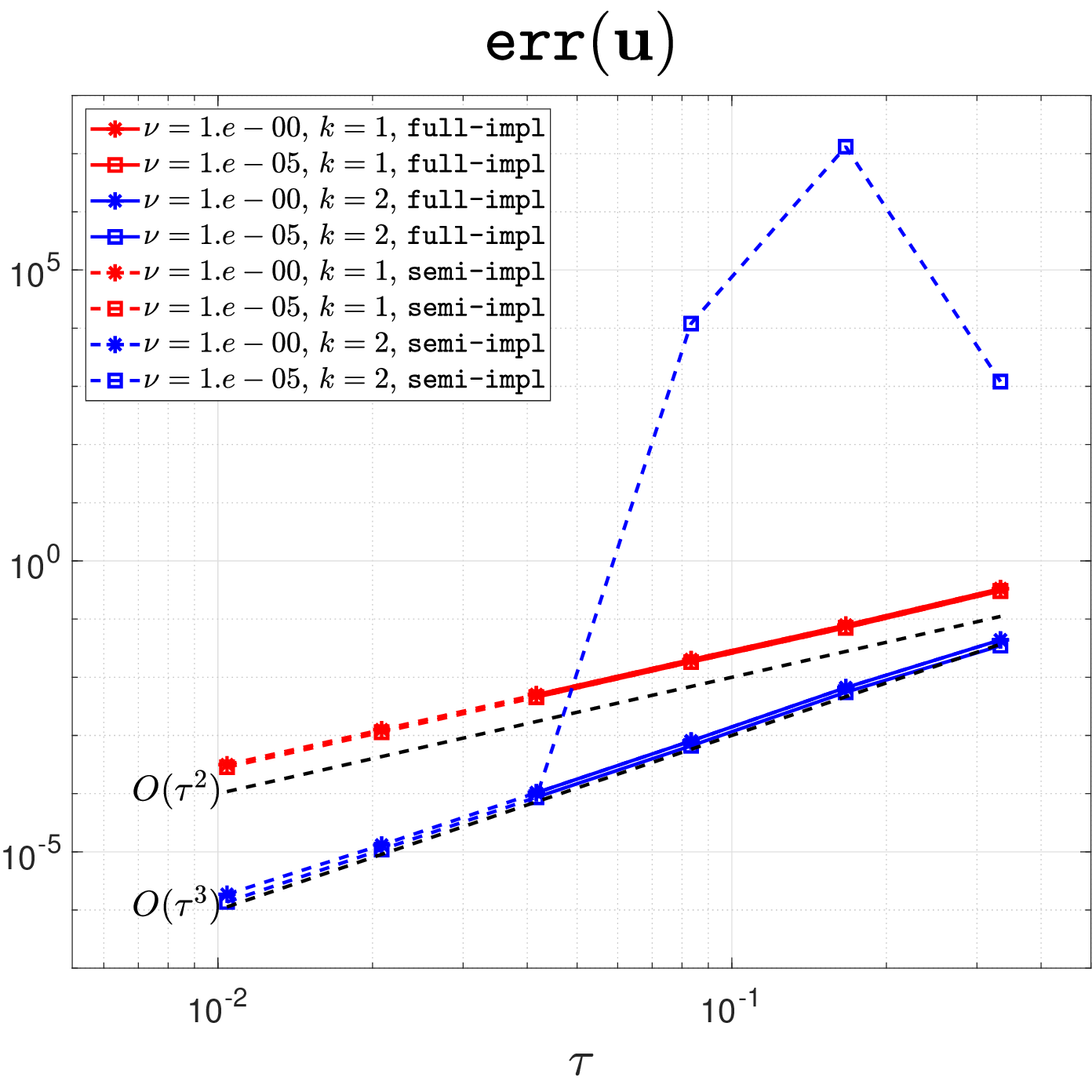}
\hspace{0.2in}
\includegraphics[width=0.47\textwidth]{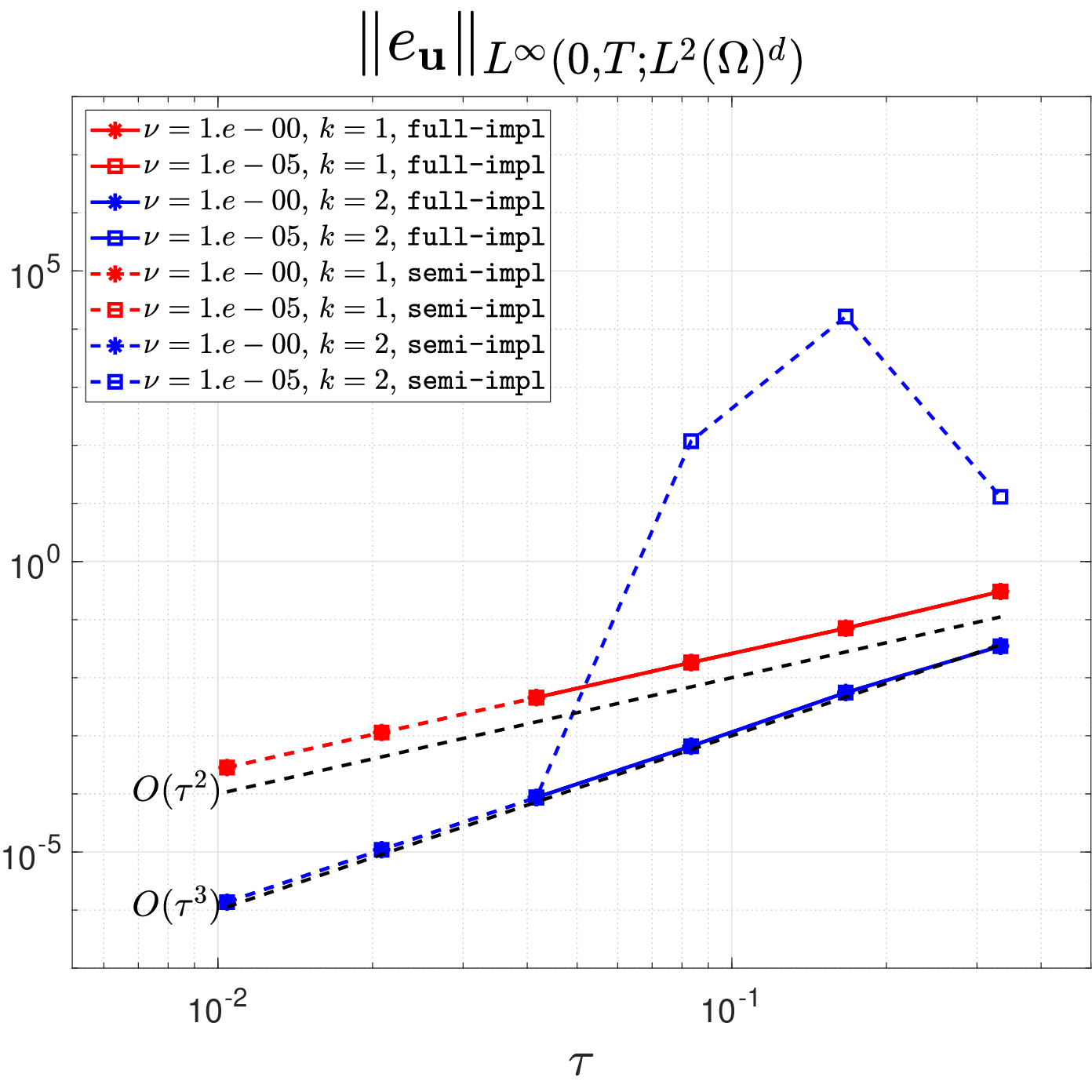}
\end{tabular}
\caption{\red{Convergence rates of the errors~$\errU$ (left) and~$\| \eu \|_{L^\infty(0,T;L^2(\Omega)^d)}$ (right) 
for the fully implicit and the semi-implicit schemes, 
varying~$\tau$, considering different values of $\nu$ and approximations of degree~$k=1$ and~$k = 2$, for the problem with exact solution in~\eqref{sol2}.}}
\label{fig:timeConv}
\end{figure}

In order to further illustrate the pressure-robustness of the method, 
we have also tested the same identical problem but with velocity solution ${\bf u}$ linear also in space, keeping the same pressure as above
\begin{equation}
\label{sol3}
\mathbf{u}(x,y, t) =\begin{bmatrix}
          y\,t\\
          x\,t
\end{bmatrix}\,.
\end{equation}
\red{As expected, for the fully implicit scheme, 
the resulting~$\errU$ is comparable to the tolerance set for the fixed-point iterations (see Figure~\ref{fig:linke}).
The same holds for the semi-implicit scheme, since such a method is also affected by the error 
introduced by the fixed-point iterations at the first time step. These results provide further numerical evidence of the pressure-robustness of the scheme.}

\begin{figure}[!htb]
\centering
\begin{tabular}{cc}
\includegraphics[width=0.47\textwidth]{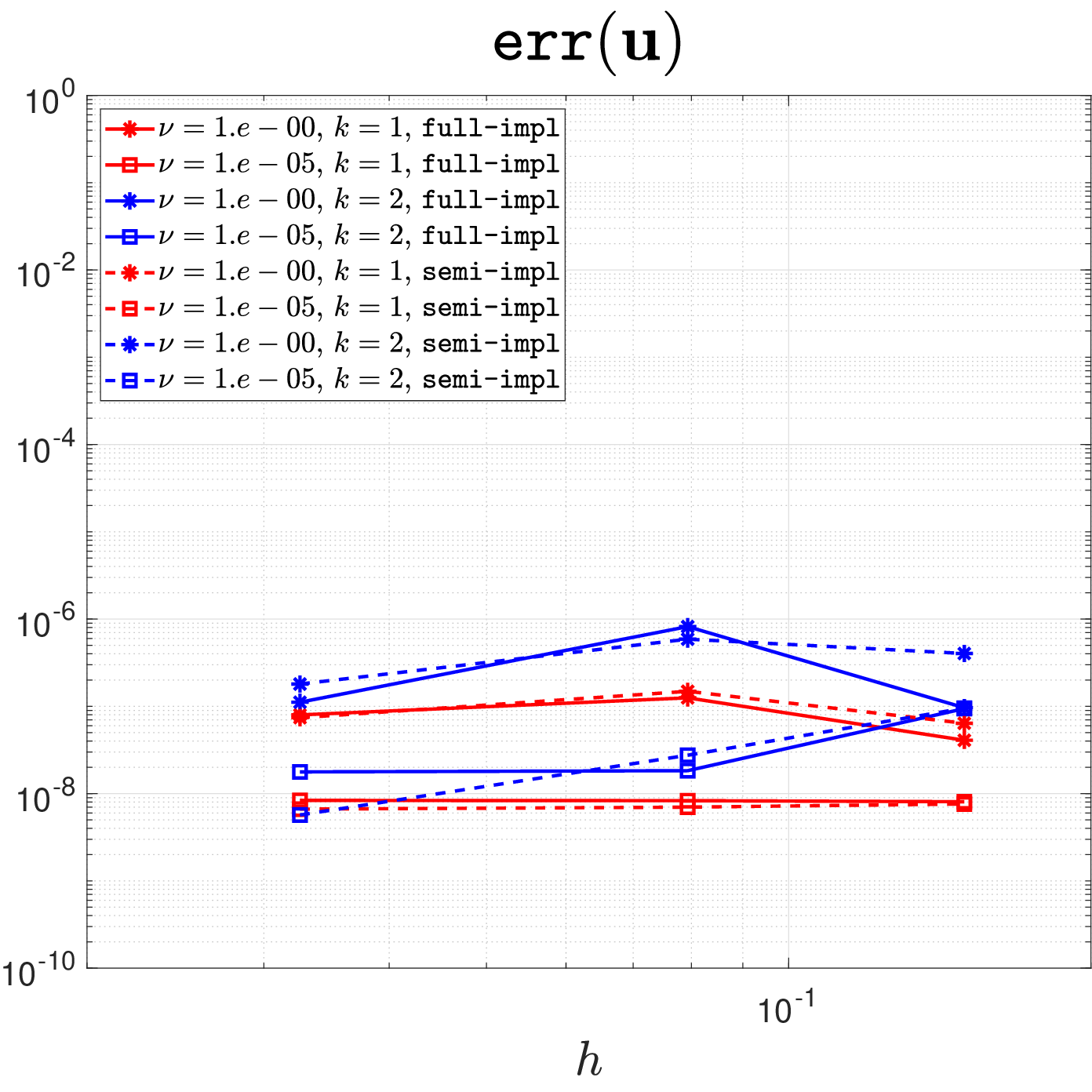}
\hspace{0.2in}
\includegraphics[width=0.47\textwidth]{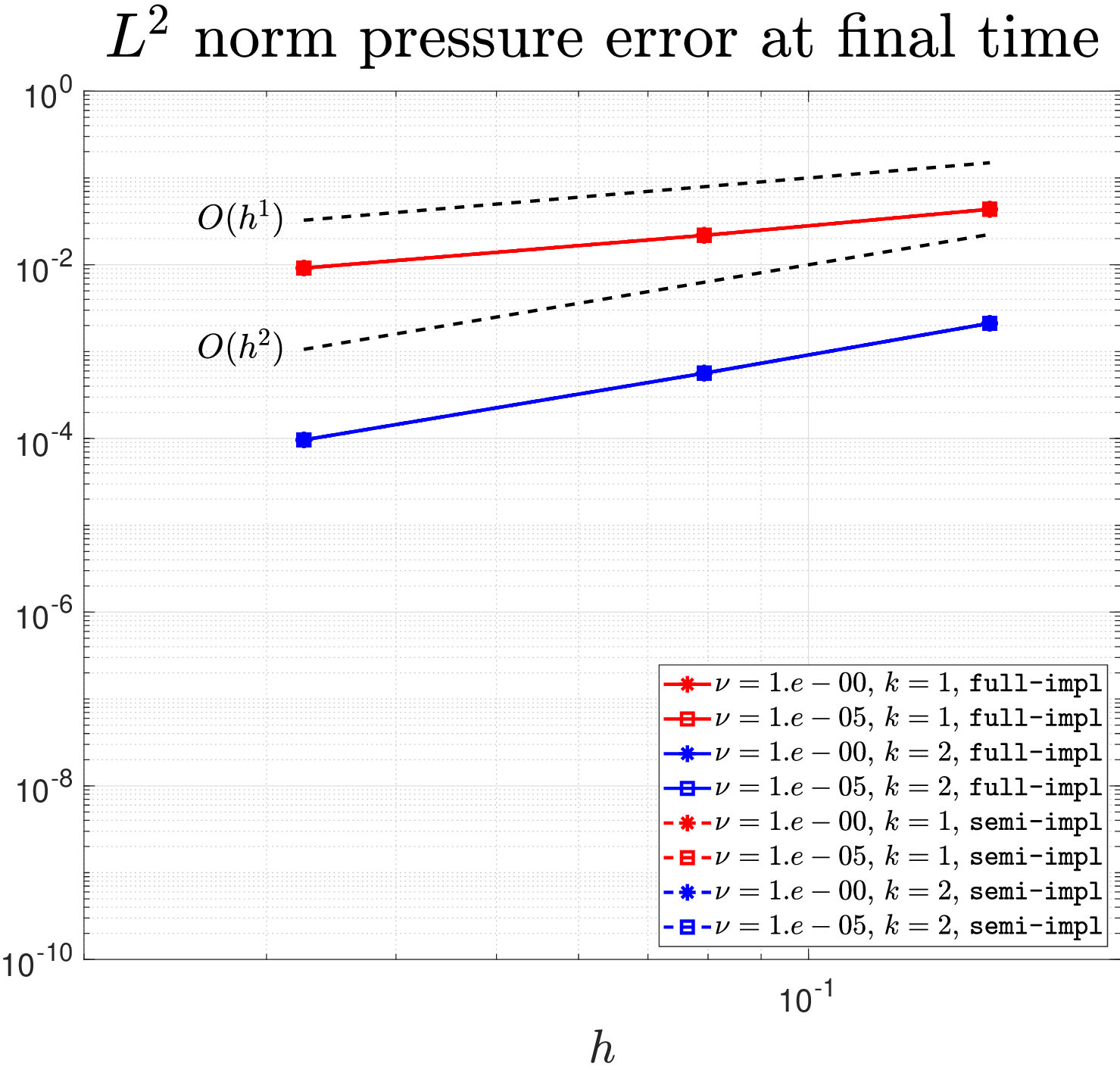}
\end{tabular}
\caption{\red{Behavior of the error $\errU$ (left) and of the~$L^2(\Omega)$-error of the pressure at the final time (right) for the fully implicit and the semi-implicit schemes, 
varying~$h$, considering different values of~$\nu$ and approximations of degree~$k=1$ and~$k = 2$, 
for the problem with exact solution in~\eqref{sol3}.}}
\label{fig:linke}
\end{figure}

\section*{{Acknowledgments}}

All authors where partially supported by the European Union (ERC Synergy, NEMESIS, project number 101115663). Views and opinions expressed are however those of the authors only and do not necessarily reflect those of the EU or the ERC Executive Agency.
All authors are also member of the INdAM-GNCS group.


\end{document}